\newtheorem{thm}{Theorem}[section]
\newtheorem{cor}[thm]{Corollary}
\newtheorem{lem}[thm]{Lemma}
\newtheorem{claim}[thm]{Claim}
\newtheorem{remark}[thm]{Remark}
\newtheorem{prop}[thm]{Proposition}
\theoremstyle{remark}
\newtheorem{rem}[thm]{Remark}
\numberwithin{equation}{section}
\theoremstyle{plain}
\newcommand{\R}{\mathbb R}
\newcommand{\e}{\epsilon}
\newcommand{\laa}{\lambda}
\newcommand{\dv}{\mathrm{div}}
\newcommand{\tr}{\mathrm{trace}}
\newcommand{\ei}{\mathrm{ess \, inf}}
\newcommand{\diam}{\mathrm{diam}}
\newcommand{\ar}{\mathrm{area}}
\begin{document}

\title{The harmonic mean curvature flow of nonconvex surfaces in $\mathbb{R}^3$}

\author{Panagiota Daskalopoulos$^*$}

\address{Department of
Mathematics, Columbia University, New York,
 USA}
\email{pdaskalo@math.columbia.edu}

\author{Natasa Sesum$^*$}
\address{Department of Mathematics, Columbia University, New York,
USA}
\email{natasas@math.columbia.edu}

\thanks{$*:$ Partially supported
by NSF grant 0604657}

\begin{abstract}
We consider a compact  star-shaped mean convex hypersurface $\Sigma^2\subset \mathbb{R}^3$.
We prove that in some cases the flow exists until it shrinks to a point . We also prove that in the case of  a surface
of revolution which is star-shaped and mean convex, a smooth solution always exists up to some finite time $T < \infty$ at which the flow shrinks to a point asymptotically spherically.
\end{abstract}

\maketitle

\section{Introduction}

We will consider in this work the deformation of a compact
hyper-surface $\Sigma_t$ in $\R^3$ with no boundary under the {\em
  harmonic mean curvature flow} (HMCF) namely the flow
\begin{equation}\label{hmcf0}
\frac{\partial P}{\partial t} = - \frac{G}{H} \, \nu
\end{equation}
which evolves each point $P$ of the surface in the direction of its
normal unit vector with speed equal to the {\em harmonic mean
  curvature} of the surface $G/H$, with $G$ denoting the Gaussian
curvature of $\Sigma_t$ and $H$ its mean curvature.  Here $\nu$
denotes the outer unit normal to the surface at $P$.  This flow
remains weakly parabolic without the condition that $\Sigma_t$ is
strictly convex. However, it becomes degenerate at points where the
Gaussian curvature $G$ vanishes.

The existence of solutions to the HMCF with strictly convex smooth
initial data was first shown by Andrews in~\cite{An3} who also showed
that under the HMCF strictly convex  smooth surfaces converge to round
points in finite time. In~\cite{Di}, Di\"eter established the short
time existence of solutions to the HMCF with weakly convex smooth
initial data. More precisely, Di\"eter showed that if at time $t=0$
the surface $\Sigma_0$ satisfies $ G \geq 0$ and $H>0$, then there exists
a unique strictly convex smooth solution $\Sigma_t$ of the HMCF
defined on $0<t< \tau$, for some $\tau>0$. By the results of Andrews,
the solution will exist up to the time where its enclosed volume
becomes zero.

In \cite{CD} Caputo and the first author considered the highly
degenerate case where the initial surface is weakly convex with flat
sides, where the parabolic equation describing the motion of the
surface becomes highly degenerate at points where both curvatures $G$
and $H$ become zero. The solvability and optimal regularity of the
surface $\Sigma_t$, for $t >0$, was addressed and studied by viewing
the flow as a {\it free-boundary} problem.  It was shown that a
surface $\Sigma_0$ of class $C^{k,\gamma}$ with $k \geq 1$ and $0<\gamma
\leq 1$ at $t=0$,  will remain in the same class for $t >0$. In addition,
the strictly convex parts of the surface become instantly $C^\infty$
smooth up to the flat sides on $t >0$, and the boundaries of the flat
sides evolve by the curve shortening flow.

The case $G <0$ was recently studied by the first author and R.
Hamilton in \cite{DH1}, under the assumption that the initial surface
is a surface or revolution with boundary, and has $G <0$ and $H <0$
everywhere.  It was shown in \cite{DH1} that under certain boundary
conditions, there exists a time $T_0 >0$ for which the HMCF admits a
unique solution $\Sigma_t$ up to $T_0$, such that $H< 0$ for all $t<
T_0$ and $H(\cdot,T_0) \equiv 0$ on some set of sufficiently large
measure.  In addition, the boundary of the surface evolves by the
curve shortening flow.

\medskip

In this work we address the questions of short time and long time existence and
regularity of the HMCF under the assumption that $\Sigma_0$ is star-shaped
with $H >0$ but with $G$ changing sign.

Let $M^2$ be a smooth, compact surface without boundary and $F_0: M^2
\to \R^3$ be a smooth immersion of $M^2$. Let us consider a smooth family of
immersions $F(\cdot,t): M^2 \to \R^3$ satisfying
\begin{equation}
\label{equation-HMCF}
\frac{\partial F(p,t)}{\partial t} = -\kappa(p,t) \cdot   \nu (p,t) \tag{HMCF}
\end{equation}
where $\kappa = G/H$ denotes the harmonic mean curvature of $\Sigma_t
:= F(M^2,t)$ and $\nu$ its  outer unit normal at every point.  This is an equivalent
formulation of the HMCF.

For any compact two-dimensional surface $M^2$ which is smoothly
embedded in $\R^3$ by $F: M^2 \to \R^3$, let us denote by $g=(g_{ij})$
the induced metric, and by $\nabla$ the induced Levi-Civita
connection.  The second fundamental form $A = \{h_{ij}\}$ is a
symmetric bilinear form $A(p): T_p\Sigma \times T_p M \to \R$, defined
by $A(u,v) = \langle \nabla_u\nu, v\rangle$.  The Weingarten map
$W(p): T_p\Sigma \to T_p\Sigma$ of $T_p M$ given by the immersion $F$
with respect to the normal $\nu$, can be computed as $ h_j^i =
g^{ik}h_{kj}$. The eigenvalues of $W(p)$ are called the principal
curvatures of $F$ at $p$  and are denoted by $\lambda_1=\lambda_1(p)$
and $\lambda_2=\lambda_2(p)$. The mean curvature $H:= \tr(W) = \lambda_1 +
\lambda_2$, the total curvature $|A|^2 := \tr (W^t\, W) = \lambda_1^2
+ \lambda_2^2$ and the Gauss curvature $G = \det W = \lambda_1\, 
\lambda_2$.

\medskip



\begin{rem}
\label{rem-andrews}
We will recall some standard facts  about homogeneous of degree one functions
of matrices that  can be found in \cite{An1}. The speed  speed $\kappa$ of the interface evolving by
the HMCF  can be viewed
as a  function of the Weingarten map $W$ and therefore,  more generally,
as a function $\kappa: S \to \R$, where $S$ denotes  the set of all symmetric, 
positive transformations of $T\Sigma^2$  with strictly positive trace. 
Let $\lambda_1, \lambda_2$ be the eigenvalues of $A \in S$. 
We can then  define the  symmetric function $f(\lambda_1,\lambda_2) := \kappa(A)$.
We have:
\begin{itemize}
\item
If $f$ is concave (convex) and $\lambda_i > \lambda_j$, then
$\frac{\partial f}{\partial \lambda_i} - \frac{\partial f}{\partial \lambda_j}$
is negative (positive).
\item
Let $\ddot{\kappa} \in T\Sigma \otimes T^*\Sigma \otimes T\Sigma \otimes T^*\Sigma$
denote the second derivative of $\kappa$ at the point $A \in S$. If $A$ is
diagonal, then
\begin{equation}
\label{eq-ddot} 
\ddot{\kappa}(\xi,\eta) = \sum_{p,q}\frac{\partial^2\kappa}{\partial\lambda_p\lambda_q}
\xi_p^p\eta_q^q + \sum_{p\neq q} \frac{\frac{\partial\kappa}{\partial\lambda_p}
- \frac{\partial\kappa}{\partial\lambda_q}}{\lambda_p - \lambda_q}\xi_p^q\eta_p^q.
\end{equation}
\end{itemize}
\end{rem}

\bigskip

The  outline of the paper is as follows: 
\begin{enumerate}[i.]
\item In section \ref{section-ste} we will establish the short time
existence of (HMCF),  under the assumption that the initial surface $\Sigma_0$ is
compact  of class $C^{2,1}$ and has $H >0$. To do so we will have to bound $H$ from below away from zero independently of $\e$. 
This does not follow naturally from the evolution of $H$. To obtain such a bound 
we need to combine the evolution of $H$ with the evolution of the gradient of
the second fundamental form. This explains our assumption that $\Sigma \in C^{2,1}$. 

\item In section \ref{section-ltee} we will study 
the long time existence of the regularized flow (HMCF$_\e$) (defined in the next section).
We will show that  there exists a maximal time 
of existence $T_\e$  of a smooth solution  $\Sigma^{\e}_t$ of (HMCF$_\e$) such that
either  $H(P_t,t) \to 0$, as $t \to T_\e$  at some points $P_t  \in \Sigma_t^\e$, or
 $\Sigma^{\e}_t$ shrinks  to a point as
$t \to T_{\e}$. In addition,  we will establish uniform in $\e$ curvature bounds and curvature pinching
estimates.  
In the special case where the   initial data is a  surface of revolution, we will show that 
the flow always exists up to the time when the surface shrinks to a point. 

\item In section \ref{section-lte} we will pass to the limit,  $\e \to 0$,  to establish the long time existence of (HMCF). 
\end{enumerate}

\section{Short time Existence}\label{section-ste}

Our goal in this section is to show the following short time existence result for
the  HMCF. 

\begin{thm}
\label{thm-STE}
Let $\Sigma_0$ be a compact    hyper-surface in $\R^3$ which is of class $C^{2,1}$  and
has strictly positive mean curvature  $H > 0$.
Then, there  exists  $T > 0$  for which the harmonic mean curvature flow (\ref{equation-HMCF}) 
admits a unique  $C^{2,1}$ solution $\Sigma_t$, such that $H > 0$ on  $t\in [0,T)$.
\end{thm}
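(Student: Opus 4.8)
The plan is to approximate the degenerate parabolic HMCF by a family of strictly parabolic flows and then pass to the limit, using uniform-in-$\e$ estimates whose derivation is the heart of the matter. Specifically, for $\e > 0$ I would replace the speed $\kappa = G/H$ by a regularized speed $\kappa_\e$ which is uniformly parabolic near $G=0$ but agrees with $G/H$ when $G$ is bounded away from zero — for instance $\kappa_\e = (G+\e H^2)/H = G/H + \e H$, or a similar perturbation; the precise choice (to be fixed in the next section as (HMCF$_\e$)) should keep $\kappa_\e$ a smooth, symmetric, concave function of the principal curvatures on the region $\{H>0\}$, so that the machinery of Remark~\ref{rem-andrews} applies. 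For such a uniformly parabolic, concave fully nonlinear flow with smooth initial data, short-time existence and uniqueness of a smooth solution $\Sigma_t^\e$ on a maximal interval $[0,T_\e)$ is standard (linearize, apply the implicit function theorem in parabolic Hölder spaces, or quote Andrews~\cite{An1}). The issue is that $T_\e$ could degenerate to $0$ as $\e\to 0$; to prevent this I must produce a positive lower bound for $H$ on a time interval whose length does not depend on $\e$, and this is exactly where the $C^{2,1}$ hypothesis is used.

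The central estimate is therefore the lower bound on $H$. I would write down the evolution equation for $H$ under (HMCF$_\e$): schematically
\[
\frac{\partial H}{\partial t} = a^{ij}\nabla_i\nabla_j H + (\text{lower order in }\nabla A) + \big(\dot\kappa_\e^{ij}h_{ik}h^k_j + \kappa_\e\, |A|^2\text{-type terms}\big)H + \cdots
\]
The problem is that the zeroth-order coefficient is not obviously signed, and in fact the gradient terms coming from the second derivative of $\kappa_\e$ (see \eqref{eq-ddot}) can be large and of the wrong sign near $G=0$, so $H$ does not satisfy a clean differential inequality by itself. The remedy, as flagged in the introduction, is to couple $H$ with $|\nabla A|^2$: one derives the evolution of $|\nabla A|^2$, which produces a good negative term $-\,c\,a^{ij}\nabla_i\nabla A\cdot\nabla_j\nabla A$ that can absorb the bad gradient contributions in the equation for $H$, at the cost of lower-order terms controlled by $\sup|A|$ and $\sup|\nabla A|$. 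Considering a quantity like $H$ — or $\log H$ — together with $|\nabla A|^2$ (possibly via a function $\varphi(H) + K|\nabla A|^2$ with constants chosen to make the cross terms cancel), and applying the maximum principle on a short time interval $[0,\tau]$ with $\tau$ depending only on the $C^{2,1}$ norm of $\Sigma_0$ and on $\min_{\Sigma_0} H$ but \emph{not} on $\e$, yields $H \ge \tfrac12\min_{\Sigma_0}H$ on $[0,\tau]$. This is the step I expect to be the main obstacle: organizing the coupled system so that the degenerate/singular gradient terms genuinely cancel, and checking that all constants are $\e$-independent.

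Once $H$ is bounded below uniformly in $\e$ on $[0,\tau]$, the flow (HMCF$_\e$) is uniformly parabolic there with $\e$-independent ellipticity constants, so standard parabolic regularity (Krylov–Safonov / Schauder estimates for concave fully nonlinear equations, exactly as in Andrews~\cite{An1}) gives uniform-in-$\e$ bounds on $\Sigma_t^\e$ in $C^{2,1}$ (indeed in every $C^{k,\alpha}$ for $t>0$, but $C^{2,1}$ up to $t=0$ is what we need) on $[0,\tau]$. We may also assume $\tau < T_\e$ after possibly shrinking, and the bounds are uniform. Passing to a subsequential limit $\e\to 0$, the surfaces $\Sigma_t^\e$ converge in $C^{2,\alpha}$ to a limit $\Sigma_t$ which is a $C^{2,1}$ solution of (HMCF) on $[0,T)$ with $T:=\tau$, and $H>0$ there. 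Uniqueness in the class of $C^{2,1}$ solutions with $H>0$ follows from linearizing the flow about two such solutions: on $\{H>0\}$ the difference satisfies a linear parabolic equation with (merely) continuous coefficients and zero initial data, and an energy estimate — or the maximum principle applied to the support functions relative to the inner normal, as is standard for star-shaped/graphical reductions — forces the two solutions to coincide. This completes the proof.
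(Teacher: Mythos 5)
Your overall strategy—regularize to $(\mathrm{HMCF}_\e)$ with speed $G/H+\e H$, prove $\e$-independent estimates, and pass to the limit—is the same as the paper's, and you correctly identify the main obstacle (a lower bound on $H$ uniform in $\e$) and the correct remedy (couple the evolution of $H$, or here $1/H$, with that of $|\nabla A|^2$). The paper uses the specific test quantity $\mathcal{V}=|\nabla A|^2+1/H$ and shows $\tfrac{d}{dt}\mathcal{V}_{\max}\le C(\mathcal{V}+\mathcal{V}^6)$, which is within the family of quantities you propose.

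However, there is a genuine gap: your argument silently assumes a uniform bound on $\sup|A|$ (you invoke ``lower-order terms controlled by $\sup|A|$''), but establishing such a bound uniformly in $\e$ is itself a nontrivial step that precedes everything else, and you never produce it. It does \emph{not} come from the maximum principle applied to $H$ or $H^2$ alone: the evolution of $H^2$ has the zero-order term $2(2\kappa^2+\e|A|^2)H^2$, and since the surface is not convex, $|A|^2$ cannot be bounded by $H^2$, so the ODE inequality does not close. The paper's Proposition~\ref{prop-H} resolves this by combining $H^2$ and $\kappa_\e^2$ into $M=H^2+\kappa_\e^2$ and proving the pointwise algebraic inequality $|A|^2\le C\,(H^2+\kappa^2)$ (valid on $\{H>0\}$), which makes the zeroth-order term quadratic in $M$ and lets the maximum principle give a uniform $|A|$ bound on a short $\e$-independent time interval. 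Without this ingredient your subsequent coupling argument cannot even be set up, because the evolution equation for $|\nabla A|^2$ contains coefficients like $\partial^2\kappa_\e/\partial h\,\partial h$ that are only controlled once $|A|$ is bounded.

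A secondary, smaller issue: your claim that once $H\ge\delta$ the regularized flow is ``uniformly parabolic with $\e$-independent ellipticity constants'' is not quite right. The coefficients in \eqref{eq-coeff1} are $\lambda_2^2/H^2+\e$ and $\lambda_1^2/H^2+\e$, which can degenerate (to $\e$) if one principal curvature vanishes, even when $H$ is bounded below; uniform ellipticity would require pinching. The paper avoids this entirely by deriving the $C^{2,1}$ compactness directly from the $|A|_{C^1}$ bound (Propositions~\ref{prop-H} and \ref{prop-nabla-curv}) and Arzel\`a--Ascoli, rather than through uniform Krylov--Safonov/Schauder theory. You should do the same.
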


\medskip
Because  the harmonic mean curvature flow becomes   degenerate when the Gauss curvature of the surface $\Sigma_t$ changes sign, we will show 
the short time existence for equation  (\ref{equation-HMCF}) by considering 
its  $\epsilon$-regularization of the flow  defined by 
\begin{equation}
\label{equation-reg}\tag{HMCF$_\e$}
\frac{\partial F_{\epsilon}}{\partial t} = -(\frac{G}{H} + \epsilon \, H)\cdot   \nu 
\end{equation}
and starting at $\Sigma_0$. We  will denote by $\Sigma_t^\e$ the surfaces obtained by
evolving the initial surface $\Sigma_0$ along the flow \eqref{equation-reg}.

Since the right hand side of \eqref{equation-reg} can be viewed as a
function of the second fundamental form matrix $A$, a direct
computation shows that its linearization is given by
$$\mathcal{L}_{\epsilon}(u) = \frac{\partial}{\partial h_k^i}
\left (\frac{G}{H} + \epsilon H \right )\nabla_i \, \nabla_k u = a_{\epsilon}^{ik}\, \nabla_i \, \nabla_k u$$
with 
\begin{equation}
\label{eq-coeff}
a_{\epsilon}^{ik} =  \frac{\partial}{\partial h_k^i}\left (\frac{G}{H} + \epsilon H \right).
\end{equation}   
Notice that if we compute $a^{ik}_\e$ in geodesic coordinates around
the point (at which the matrix $A$ is diagonal) we get
\begin{equation}
\label{eq-coeff1}
a^{ik}_\epsilon = \left( \begin{array}{cc}
\frac{\lambda_2^2}{(\lambda_1 + \lambda_2)^2} + \epsilon & 0 \\
0 & \frac{\lambda_1^2}{(\lambda_1 + \lambda_2)^2} + \epsilon \\
\end{array} \right)
\end{equation} 
which is strictly positive definite, no matter what the principal curvatures are. 

The following  short time existence for the regularized flow \eqref{equation-reg}
follows from the standard theory on the existence of solutions to
strictly parabolic equations.

\begin{prop}\label{prop-ste-e}Let $\Sigma_0$ be a compact  hyper-surface in $\R^3$ which is of class $C^{1,1}$  and
has strictly positive mean curvature  $H > 0$.
Then, there  exists  $T_\e  > 0$,  for which the harmonic mean curvature flow (\ref{equation-reg}) 
admits a smooth  solution $\Sigma_t^\e$, such that $H > 0$ on  $t\in [0,T_\e)$. 
\end{prop}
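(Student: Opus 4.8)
The plan is to recast the regularized flow \eqref{equation-reg} as a quasilinear parabolic system for a scalar function and then quote the standard short-time existence theory. Since $\Sigma_0$ is compact, of class $C^{1,1}$, and has $H>0$, it is in particular strictly mean convex and smooth enough to admit a tubular neighborhood; the first step is to write the evolving surface locally (or globally, via a reference immersion) as a graph. Concretely, I would fix the reference immersion $F_0:M^2\to\R^3$ and look for $F(\cdot,t)=F_0+u(\cdot,t)\,\nu_0$ with $u(\cdot,0)=0$, or equivalently work in local coordinate patches on $M^2$ and write $\Sigma_t$ as a graph over $\Sigma_0$ of a function $u$. In these coordinates the normal velocity $-(G/H+\e H)$ becomes a fully nonlinear function of the Hessian of $u$ (and lower-order terms in $Du$, $u$), so the equation for $u$ reads $\partial_t u = \Phi_\e(x,u,Du,D^2u)$.

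The key point is the linearization computation already recorded in the excerpt: the linearization of the right-hand side of \eqref{equation-reg}, viewed as a function of the second fundamental form $A$, is $\mathcal{L}_\e(u)=a_\e^{ik}\nabla_i\nabla_k u$ with $a_\e^{ik}$ given by \eqref{eq-coeff}, and \eqref{eq-coeff1} shows that in a frame diagonalizing $A$ the coefficient matrix is
\[
a^{ik}_\e=\begin{pmatrix}\dfrac{\lambda_2^2}{(\lambda_1+\lambda_2)^2}+\e & 0\\[2mm] 0 & \dfrac{\lambda_1^2}{(\lambda_1+\lambda_2)^2}+\e\end{pmatrix},
\]
which is strictly positive definite regardless of the signs or values of $\lambda_1,\lambda_2$, the only constraint being $H=\lambda_1+\lambda_2>0$. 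Hence, as long as $H>0$ along the flow, the equation $\partial_t u=\Phi_\e(x,u,Du,D^2u)$ is uniformly (strictly) parabolic near the initial data: at $t=0$ the inequality $H>0$ holds by hypothesis on $\Sigma_0$, and since the coefficients depend continuously on $(x,u,Du,D^2u)$ and hence on $t$, there is a short time interval on which uniform parabolicity persists for any $C^1$-small solution $u$.

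With this in hand, the proof is an application of the standard existence theory for quasilinear (indeed fully nonlinear, but we linearize and use Schauder/fixed point) strictly parabolic equations on a compact manifold: given $C^{2,\al}$ (in fact here $C^{1,1}$ suffices to get a smooth solution for positive time, since the equation regularizes) initial data for which the operator is strictly parabolic, there exists $T_\e>0$ and a solution, which is smooth for $t\in(0,T_\e)$ by parabolic bootstrapping, and the property $H>0$ is preserved on a possibly smaller interval by continuity. One then transfers the graphical solution $u$ back to a family of immersions $F_\e(\cdot,t)$ solving \eqref{equation-reg}; reparametrizing by tangential diffeomorphisms if necessary turns this into a solution of the geometric flow, and uniqueness in the graphical gauge gives the stated solution. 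The main (though routine) obstacle is purely bookkeeping: verifying that $\Phi_\e$ is a smooth function of its arguments on the open set where $H>0$ and that the induced PDE genuinely fits the hypotheses of the quoted parabolic existence theorem — in particular checking the strict parabolicity uniformly in a $C^1$-neighborhood of the initial graph, which is exactly what \eqref{eq-coeff1} and the hypothesis $H>0$ deliver; no delicate a priori estimate is needed here, as the subtle lower bound on $H$ independent of $\e$ is deferred to Theorem~\ref{thm-STE}.
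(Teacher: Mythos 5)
Your proposal takes essentially the same route as the paper: the paper simply asserts that Proposition~\ref{prop-ste-e} ``follows from the standard theory on the existence of solutions to strictly parabolic equations,'' having just displayed the strictly positive definite linearization coefficients \eqref{eq-coeff1}, and your write-up merely fleshes out the graphical reduction and the invocation of parabolic theory that the paper leaves implicit. The one place you are slightly cavalier is the parenthetical claim that $C^{1,1}$ initial data suffices by regularization --- starting a fully nonlinear flow from $C^{1,1}$ data and obtaining a smooth solution for $t>0$ genuinely needs a mollification/limiting argument together with Krylov--Safonov or $W^{2,p}$ estimates rather than a bare Schauder fixed-point --- but since the paper itself glosses over exactly this point, your proposal is faithful to the paper's level of detail.
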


Our goal is to show that if the initial surface $\Sigma_0$ is  of class 
$C^{2,1}$, then there is a $T_0> 0$,  so that $T_{\epsilon} \ge T_0$
for every $\epsilon$ and that we have uniform estimates on
$F_{\epsilon}$, independent of $\epsilon$, so that we can take a limit
of $F_{\epsilon}$ as $\epsilon\to 0$ and obtain  a solution of
(\ref{equation-HMCF}) that is of class $C^{2,1}$. The main obstacle here is to exclude that  $H_\e(P_\e,t_\e) \to 0$,
as $\e \to 0$,  for some points $P_\e \in \Sigma_{t_\e}^\e$ and times $t_\e \to 0$. Notice that our flow
cannot be defined at points where $H=0$. 
\bigskip

\noindent{\em Notation.}   \begin{enumerate}[$\bullet$]

\item When there is no
possibility of confusion,  we will use the letters $c$, $C$ and $T_0$ 
for various  constants  which are independent of $\e$ 
but change from line to line.

\item  Throughout this section we will  denote by $\lambda_1, \lambda_2$ the two principal curvatures of the surface $\Sigma_t^\e$
at a point $P$ and will assume  that
$\lambda_1 \ge \lambda_2.$  

\item  When there is no possibility of confusion we
will drop the index $\e$ from $H, G, A, g_{ij}, h_{ij}$ etc.  

\end{enumerate}

\medskip

The next lemma  follows directly from the computations of B. Andrews in \cite{An1} (Chapter 3). 

\begin{lem}
\label{lemma-evolution}
If $\Sigma^{\epsilon}_t$ moves by (\ref{equation-reg}), with speed
$\kappa_\epsilon := \frac{G}{H} + \epsilon H$, the computation in
\cite{An1} gives us the evolution equations
\begin{enumerate}[i.]
\item 
$\frac{\partial}{\partial t}H = \mathcal{L}_{\epsilon} H +
\frac{\partial^2 \kappa_\epsilon}{\partial h_q^p\partial h_m^l}
\nabla^i h_q^p\nabla_j h_m^l + \frac{\partial
\kappa_\epsilon}{\partial h_m^l}h_p^lh_m^p\,  H$
\item 
$\frac{\partial}{\partial t} \kappa_\epsilon =
\mathcal{L}_{\epsilon} \kappa_\e + \frac{\partial \kappa_\epsilon}{\partial
h_j^i}h_{il}h_{lj}\, \kappa_\epsilon$.
\end{enumerate}
\end{lem}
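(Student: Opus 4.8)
The plan is to derive both identities by specialising the general evolution equations for a hypersurface flow $\partial_t F = -\mathcal{F}\,\nu$, with $\mathcal{F}$ a smooth symmetric function of the Weingarten map $W$, to the speed $\mathcal{F} = \kappa_\epsilon = G/H + \epsilon H$, and by exploiting that $\kappa_\epsilon$ is homogeneous of degree one in the principal curvatures. First I would record the standard variational formulas, all of which are contained in \cite{An1}, Chapter~3: under such a flow one has $\partial_t g_{ij} = -2\mathcal{F}\,h_{ij}$, $\partial_t g^{ij} = 2\mathcal{F}\,h^{ij}$, $\partial_t \nu = \nabla\mathcal{F}$, and, after commuting covariant derivatives of the second fundamental form through the Codazzi equations, $\partial_t h_{ij} = \nabla_i\nabla_j\mathcal{F} - \mathcal{F}\,h_{ik}h^k_j$; combining this with the evolution of $g^{ij}$ gives $\partial_t h^i_j = \nabla^i\nabla_j\mathcal{F} + \mathcal{F}\,h^i_k h^k_j$. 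These hold verbatim for $\mathcal{F} = \kappa_\epsilon$, whose first and second derivatives in $W$ are the objects appearing in \eqref{eq-coeff} and \eqref{eq-ddot}.

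Identity~(ii) would then be immediate: since $\kappa_\epsilon = \kappa_\epsilon(W)$, the chain rule together with the evolution of $h^i_j$ gives $\partial_t\kappa_\epsilon = \frac{\partial\kappa_\epsilon}{\partial h^i_j}\,\partial_t h^j_i = \frac{\partial\kappa_\epsilon}{\partial h^i_j}\,\nabla^j\nabla_i\kappa_\epsilon + \kappa_\epsilon\,\frac{\partial\kappa_\epsilon}{\partial h^i_j}\,h_{il}h^l_j$, and the first term on the right is by definition $\mathcal{L}_\epsilon\kappa_\epsilon$, cf. \eqref{eq-coeff}. No Hessian-in-$W$ term appears, because $\kappa_\epsilon$ is differentiated only once in time.

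For identity~(i) I would begin from the trace of the evolution of $h^i_j$, namely $\partial_t H = \Delta\kappa_\epsilon + \kappa_\epsilon\,|A|^2$, and then rewrite $\Delta\kappa_\epsilon$. The chain rule gives $\Delta\kappa_\epsilon = \frac{\partial\kappa_\epsilon}{\partial h^p_q}\,\Delta h^p_q + \frac{\partial^2\kappa_\epsilon}{\partial h^p_q\,\partial h^l_m}\,\nabla_i h^p_q\,\nabla^i h^l_m$, where the last term is precisely the $\ddot{\kappa}_\epsilon$-term of \eqref{eq-ddot} evaluated on $\nabla_i A$ in both slots. Applying Simons' identity for hypersurfaces of flat $\R^3$, in the form $\Delta h^p_q = \nabla^p\nabla_q H + H\,h^p_k h^k_q - |A|^2\,h^p_q$, turns $\frac{\partial\kappa_\epsilon}{\partial h^p_q}\,\Delta h^p_q$ into $\mathcal{L}_\epsilon H + H\,\frac{\partial\kappa_\epsilon}{\partial h^p_q}\,h^p_k h^k_q - |A|^2\,\frac{\partial\kappa_\epsilon}{\partial h^p_q}\,h^p_q$. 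Since $\kappa_\epsilon$ is homogeneous of degree one, Euler's identity yields $\frac{\partial\kappa_\epsilon}{\partial h^p_q}\,h^p_q = \kappa_\epsilon$, so the last term equals $-|A|^2\kappa_\epsilon$ and cancels the $\kappa_\epsilon|A|^2$ coming from the trace; what remains is exactly~(i).

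The main obstacle is bookkeeping rather than conceptual: one must verify the two inputs $\partial_t h_{ij} = \nabla_i\nabla_j\mathcal{F} - \mathcal{F}\,h_{ik}h^k_j$ and the stated form of Simons' identity, both of which require commuting covariant derivatives of $A$ via the Codazzi and Gauss equations (in $\R^3$ the intrinsic curvature is re-expressed through $h$) while keeping all signs straight. As these are precisely the computations carried out in \cite{An1}, Chapter~3, I would cite them rather than redo them; the only ingredient genuinely special to the harmonic mean curvature flow is the degree-one homogeneity of $\kappa_\epsilon$, which is what forces the $|A|^2$-terms to cancel in~(i).
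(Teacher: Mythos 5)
Your derivation is correct and is essentially the argument the paper is invoking when it cites Andrews~\cite{An1}, Chapter~3: starting from the standard evolution of $h^i_j$, tracing and applying Simons' identity together with degree-one homogeneity of $\kappa_\epsilon$ to get~(i), and a direct chain rule to get~(ii). The paper's ``proof'' is just a pointer to that reference, so you have simply written out the computation the authors chose not to reproduce; there is no substantive difference of method.
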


Note that if $\kappa := \frac{G}{H}$, we have 
\begin{equation}
\label{eq-useful0}
\frac{\partial\kappa}{\partial h_p^q}h_m^qh_p^m = \sum_{i=1}^2\frac{\partial\kappa}
{\partial\lambda_i}\lambda_i^2 = 2\, \kappa^2 \end{equation}
hence
\begin{equation}
\label{eq-useful00}
\frac{\partial\kappa_\e}{\partial h_p^q}h_m^qh_p^m = \sum_{i=1}^2\frac{\partial\kappa_\e}
{\partial\lambda_i}\lambda_i^2 = 2\, \kappa^2 + \e\, |A|^2
\end{equation}
with $|A|^2 = \lambda_1^2+\lambda_2^2$. 
We then conclude from the above lemma that $H$ and $\kappa_\e$ satisfy the evolution equations

\begin{equation}\label{eqn-H}
\frac{\partial}{\partial t}H = \mathcal{L}_{\epsilon} H +
\frac{\partial^2 \kappa_\epsilon}{\partial h_q^p\partial h_m^l}
\nabla^i h_q^p\nabla_j h_m^l + (2\, \kappa^2 + \e\, |A|^2)\,  H
\end{equation}
and
\begin{equation}\label{eqn-k}
\frac{\partial \kappa_\epsilon^2}{\partial t}  =
\mathcal{L}_{\epsilon} \kappa_\e +  (2\, \kappa^2 + \e\, |A|^2)\, \kappa_\epsilon^2.
\end{equation}

\medskip

We will now combine the above evolution equations to establish the  following uniform bound on the second fundamental form.

\begin{prop}
\label{prop-H}
There  exist  uniform constants $C$ and $T_0$ so that 
\begin{equation}
\label{eqn-A}
\max_{\Sigma_t^{\e}}|A| \le C, \qquad  \forall t\in [0,\min (T_{\epsilon}, T_0)\, ).
\end{equation}
\end{prop}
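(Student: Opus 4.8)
The plan is to run a maximum-principle argument on a suitable combination of $H$ and $\kappa_\e = G/H + \e H$, exploiting the fact that $\kappa_\e$ is a lower-order quantity controlled from above by the scale-invariant structure of the flow, while $H$ can only degenerate downward. The key observation is that the full second fundamental form is essentially controlled once we bound $H$ from above and $H$ from below away from zero: indeed $|A|^2 = H^2 - 2G = H^2 - 2H(\kappa_\e - \e H) = (1+2\e)H^2 - 2H\kappa_\e$, so a two-sided bound on $H$ together with an upper bound on $\kappa_\e$ yields (\ref{eqn-A}). Thus the proposition reduces to (a) an upper bound on $\kappa_\e$, (b) an upper bound on $H$, and (c) a lower bound on $H$ away from zero, all uniform in $\e$ and valid on a uniform time interval $[0, T_0)$.

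For step (a), I would apply the maximum principle to the evolution equation (\ref{eqn-k}) for $\kappa_\e^2$. At a spatial maximum the Laplacian-type term $\mathcal L_\e \kappa_\e$ is $\le 0$, and the reaction term is $(2\kappa^2 + \e|A|^2)\kappa_\e^2$; using $\kappa_\e \le \kappa_\e^2$ plus a constant and $2\kappa^2 + \e|A|^2 \le C\,\kappa_\e$ (valid since $\kappa \le \kappa_\e$ and $\e|A|^2 = \e\lambda_1^2 + \e\lambda_2^2 \le$ a bounded multiple of $\kappa_\e$ when $\kappa_\e$ is bounded — this needs care, see below), one gets an ODE differential inequality $\frac{d}{dt}\max\kappa_\e^2 \le C(\max\kappa_\e^2)^{3/2}$ or similar, which keeps $\max_{\Sigma_t^\e}\kappa_\e$ bounded by its initial value on a uniform time interval $[0,T_0)$. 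The initial bound on $\kappa_\e(\cdot,0)$ follows from $\Sigma_0 \in C^{2,1}$ and $H>0$ on the compact surface $\Sigma_0$, which bounds $G/H$ and $\e H$ uniformly. Step (b), the upper bound on $H$, I would obtain from (\ref{eqn-H}): the gradient term $\frac{\partial^2\kappa_\e}{\partial h_q^p \partial h_m^l}\nabla^i h_q^p \nabla_j h_m^l$ is $\le 0$ by concavity of $\kappa_\e$ as a function of the Weingarten map (this is where Remark \ref{rem-andrews} and Andrews' machinery enter), so $H$ satisfies $\partial_t H \le \mathcal L_\e H + (2\kappa^2 + \e|A|^2)H$, and once $\kappa_\e$ is bounded from (a), the coefficient $2\kappa^2 + \e|A|^2$ is bounded in terms of $\kappa_\e$ and $H$, giving a Gr\"onwall-type bound $\max H \le e^{Ct}\max H(\cdot,0)$ on $[0,T_0)$.

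Step (c), the lower bound $H \ge c > 0$, is the main obstacle, and as the introduction already flags, it does \emph{not} follow from the evolution of $H$ alone: the reaction term in (\ref{eqn-H}) has the right (favorable) sign, but the gradient term $\frac{\partial^2\kappa_\e}{\partial h_q^p \partial h_m^l}\nabla^i h_q^p\nabla_j h_m^l$ is nonpositive and could in principle drive $H$ to zero. The strategy — and the reason the hypothesis is $C^{2,1}$ rather than merely $C^2$ — is to couple the evolution of $H$ with the evolution of $|\nabla A|^2$ (or of $\nabla A$ itself), controlling the bad gradient term by absorbing it against the good negative term $-\mathcal L_\e|\nabla A|^2$ that appears in the evolution of $|\nabla A|^2$. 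Concretely I would look at an auxiliary function of the form $\varphi = \frac{|\nabla A|^2}{H^{2}}$ or $H^{-1} + \text{(small)}|\nabla A|^2$, or a barrier argument showing $H$ decreases at a controlled rate; the $C^{2,1}$ hypothesis guarantees that $|\nabla A|$ is bounded initially, which is exactly what is needed to start such an argument. I expect the technical heart of the paper to be this coupled estimate, carried out carefully with the Andrews commutation formulas for $\nabla A$ and the interpolation of the cubic-in-curvature error terms; the bounds from (a) and (b) feed in to make all the coefficients uniformly controlled on $[0, T_0)$. Once (a), (b), (c) are in hand, (\ref{eqn-A}) follows from the identity $|A|^2 = (1+2\e)H^2 - 2H\kappa_\e$ displayed above.
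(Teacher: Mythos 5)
Your overall instinct --- that the bound must come from combining $H$ and $\kappa_\e$ and running a maximum principle --- is the right direction, but the specific algebraic reduction you propose is incorrect and the plan does not close. Look carefully at your identity $|A|^2 = (1+2\e)H^2 - 2H\kappa_\e$. Since $H>0$, the term $-2H\kappa_\e$ is large and \emph{positive} precisely when $\kappa_\e$ is very negative (which is exactly the degenerate regime $G\ll 0$ that the hypotheses allow). So a two-sided bound on $H$ together with an \emph{upper} bound on $\kappa_\e$ does not give an upper bound on $|A|^2$; you need a \emph{lower} bound on $\kappa_\e$, i.e.\ a bound on $|\kappa_\e|$. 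Your step (a) therefore targets the wrong inequality, and moreover the ODE argument you sketch there is broken for the same reason: the reaction coefficient $2\kappa^2 + \e|A|^2$ is nonnegative, while $\kappa_\e$ can be arbitrarily negative, so $2\kappa^2 + \e|A|^2 \le C\,\kappa_\e$ is false in general. Neither $\kappa_\e$ alone nor $H$ alone satisfies a closed differential inequality because the zero-order coefficient involves $|A|^2$, which neither quantity controls by itself --- this is exactly why the surface not being convex is the obstruction.

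The paper's proof avoids all of this, and in particular does \emph{not} use a lower bound on $H$ (that is a separate, later proposition, and it is the one that genuinely requires the $C^{2,1}$ hypothesis and the $\nabla A$ coupling that you invoke). The idea is to set $M = H^2 + \kappa_\e^2$, add the two differential inequalities (both reaction coefficients are the same $2(2\kappa^2+\e|A|^2)$, and both gradient terms have a good sign by concavity), and then observe the elementary algebraic pinching inequality
\begin{equation*}
|A|^2 \le C\bigl(H^2 + \kappa^2\bigr),
\end{equation*}
which holds uniformly for all $\lambda_1\ge\lambda_2$ with $\lambda_1+\lambda_2>0$ (setting $\mu = \lambda_1/\lambda_2$ and checking $\frac{(1+\mu)^2}{\mu^2} + \frac{1}{(1+\mu)^2} \ge c > 0$ for $|\mu|\ge 1$). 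This turns the reaction term into $\theta M^2$, so $\frac{d}{dt}M_{\max}\le\theta M_{\max}^2$ and $M$ stays bounded on a uniform short time interval; then $|A|^2 \le CM$ finishes. So the missing ingredient in your proposal is precisely this quadratic algebraic inequality, which lets you close the estimate on the sum $H^2+\kappa_\e^2$ without ever needing a lower bound on $H$ or any control on $\nabla A$ at this stage.
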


\begin{proof}
Recall that $H$ satisfies  the equation \eqref{eqn-H}. If we multiply this equation by $H$, we get
$$\frac{\partial H^2}{\partial t} = \mathcal{L}_{\epsilon}(H^2) - 2a_{\epsilon}^{ik}\nabla_i H\nabla_k H
+ \frac{\partial^2 \kappa_\e}{\partial h_q^p\partial h_m^l}
\nabla^i h_q^p\nabla_i h_m^l\cdot H + 2 (2 \kappa^2 + \e \, |A|^2)\,   H^2$$
with $\kappa=G/H$. 
Notice that the  definiteness of the matrix $D^2 \kappa_\e = [\frac{\partial^2 \kappa_\e}
{\partial h_p^q\partial h_m^l}]$ depends on the sign of $H$. It is easier to check this  in geodesic
coordinates around a point at which the Weingarten map is diagonalized. In those coordinates, by
(\ref{eq-ddot}), we have
$$\sum_{p,q,m,l}\frac{\partial^2 \kappa_\e}{\partial h_q^p\partial h_m^l}\nabla^i h_q^p\nabla_i h_m^l
= \sum_{p,q}\frac{\partial^2\kappa_{\e}}{\partial\lambda_p\lambda_q}\nabla^i h_p^p\nabla_i h_q^q
+ \sum_{p\neq q}\frac{\frac{\partial\kappa_{\e}}{\partial\lambda_p} - 
\frac{\partial\kappa_{\e}}{\partial\lambda_q}}{\lambda_p - \lambda_q}(\nabla_i h_p^q)^2$$
where the matrix $D^2\kappa_{\e} := [\frac{\partial^2\kappa_{\e}}{\partial\lambda_p\lambda_q}]$
is given by 
\begin{equation}
\label{eq-concavity}
 D^2\kappa_\e = \left(\begin{array}{cc}
-\frac{2\lambda_2^2}{(\lambda_1 + \lambda_2)^3}  & \frac{2\lambda_1\lambda_2}{(\lambda_1+\lambda_2)^3} \\
\frac{2\lambda_1\lambda_2}{(\lambda_1+\lambda_2)^3} & -\frac{2\lambda_1^2}{(\lambda_1 + \lambda_2)^3}  \\
\end{array} \right) = -\frac{2}{H^3}\left(\begin{array}{cc}
\lambda_2^2 & -\lambda_1\lambda_2 \\
-\lambda_1\lambda_2 & \lambda_1^2\\ \end{array} \right)
\end{equation} 
and for $p\neq q$,
$$\frac{\frac{\partial\kappa_{\e}}{\partial\lambda_p} - 
\frac{\partial\kappa_{\e}}{\partial\lambda_q}}{\lambda_p - \lambda_q}
= \frac{\lambda_q^2 - \lambda_p^2}{\lambda_p - \lambda_q} = -\frac{1}{H}.$$
It is now easy to see that
\begin{equation}
\label{eq-definite-H}
\frac{\partial^2 \kappa_\e}{\partial h_q^p\partial h_m^l}
\nabla^i h_q^p\nabla_i h_m^l\cdot H \le 0
\end{equation}
hence 
\begin{equation}\label{eqn-H2}
\frac{\partial H^2}{\partial t} \le \mathcal{L}_{\epsilon}(H^2) + 2\, (2  \kappa^2 + \e \, |A|^2)\,   H^2.
\end{equation}
Similarly, from the evolution of $\kappa_\e$, namely \eqref{eqn-k},  we obtain
\begin{equation}\label{eqn-k2}
\frac{\partial \kappa_\epsilon^2}{\partial t}  \leq
\mathcal{L}_{\epsilon} (\kappa_\e^2)  + 2\,  (2 \kappa^2 + \e\, |A|^2)\, \kappa_\epsilon^2. 
\end{equation}
We observe that because of   the appearance of the second fundamental form  $|A|^2$ in  the
zero order term  of   the  equations \eqref{eqn-H2} and \eqref{eqn-k2}, we cannot estimate the maximum 
of $H^2$ and $\kappa_\e^2$ directly from each equation using the maximum principle. This is because the
surface is not convex. However, it is possible to estimate the maximum of $H^2+\kappa_\e^2$ by combining the
two evolution equations. 
To this end, we set $M=H^2 + \kappa_\e^2$ and compute,  by adding the last two equations,  that
\begin{equation}\label{eqn-M}
\frac{\partial M}{\partial t}  \leq
\mathcal{L}_{\epsilon}  M   +  2\,  (2 \kappa^2 + \e\, |A|^2) \,  M. 
\end{equation}
We will  show the bound
\begin{equation}\label{eqn-bound}
2 \kappa^2 + \e\, |A|^2 \leq C\, (H^2 + \kappa_\e^2)
\end{equation}
for some uniform in $\e$ constant $C$, 
where $\kappa_\e = \kappa+ \e\, H$ and $\kappa = G/H$. 
Since $\kappa \leq \kappa_\e$ it will be sufficient to show that
\begin{equation}\label{eqn-AA}
|A|^2 \leq C\, (H^2 + \kappa^2).
\end{equation}
Expressing everything in terms of the principal curvatures $\lambda_1$ and
$\lambda_2$, the above reduces to the estimate
$$\laa_1^2 + \laa_2^2 \leq C\, \left ( (\laa_1+\laa_2)^2 + \left ( \frac{\laa_1\, \laa_2}{\laa_1+\laa_2} \right )^2 \right ).$$
If $\laa_2=0$ the above inequality is clearly satisfied. 
Assume that $\laa_2 \leq \laa_1$ with $\laa_2 \neq 0$  and set $\mu = {\laa_1}/{\laa_2}$. Since $H=\laa_1+\laa_2 >0$, we conclude that $|\mu| \geq 1$. 
Then, the last inequality is expressed as 
$$1+ \frac 1{\mu^2} \leq C \,  \left ( \frac{(1+\mu)^2}{\mu^2} + \frac{1}{(1+\mu)^2} \right )
$$
which can be reduced to showing that
$$ \frac{(1+\mu)^2}{\mu^2} + \frac{1}{(1+\mu)^2} \geq c >0$$
for a uniform constant $c$, since $|\mu| \geq 1$. This inequality is clearly satisfied
when $|\mu| \geq 1$. Hence, \eqref{eqn-bound} holds.

Applying \eqref{eqn-bound} on \eqref{eqn-M} we conclude get
$$\frac{\partial M}{\partial t}  \leq
\mathcal{L}_{\epsilon}  M   +  \theta \,   M^2$$
for some uniform constant $\theta$. The maximum principle then implies  the
differential inequality
$$\frac{d M_{\max}}{d t}  \leq  \theta \,   M_{\max}^2$$
which readily implies that 
$$\max_{\Sigma_t^{\e}} M \le C, \qquad  \forall t\in [0,\min (T_{\epsilon}, T_0)\, )$$
for some  uniform in $\e$  constants $C$ and $T_0$. This combined with
\eqref{eqn-AA} implies \eqref{eqn-A} finishing the proof of the proposition. 
\end{proof}

To establish the short existence of the flow \eqref{equation-HMCF} on $(0,T_0)$ for some
$T_0 >0$,  we still need to bound $H$ from below away from zero independently of $\e$. 
This does not follow naturally from the evolution of $H$, because the equation 
\eqref{eqn-H} carries   a quadratic negative term which depends
on the derivatives of the second fundamental form. Hence to establish the lower bound on $H$ 
we need to combine the evolution of $H$ with the evolution of the gradient of
the second fundamental form. This is shown in the next proposition.

\begin{prop}
\label{prop-nabla-curv}
There exist uniform in $\e$ positive constants $T_0 $,
$C$ and $\delta$,  so that 
$$|\nabla A| \le C \quad \mbox{and} \quad  H \ge \delta, 
\quad \mbox{on}\,\,  \Sigma_t^{\e}$$
 for $t\in [0,
\min (T_\e,T_0)\,)$.
\end{prop}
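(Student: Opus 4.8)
The plan is to run a simultaneous maximum principle argument on the quantity $\nabla A$ together with a suitable power of $H$, exploiting the fact that the dangerous negative gradient term in the evolution of $H$ (equation~\eqref{eqn-H}) is exactly of the size needed to dominate the bad terms coming from the evolution of $|\nabla A|^2$. More precisely, I would first record, following Andrews~\cite{An1}, the evolution equation for the second fundamental form $A$ under \eqref{equation-reg}, differentiate it to get an equation for $\nabla A$, and then compute
$$
\frac{\partial}{\partial t}|\nabla A|^2 = \mathcal{L}_\epsilon |\nabla A|^2 - 2 a_\epsilon^{ik}\langle \nabla_i \nabla A,\nabla_k \nabla A\rangle + (\text{lower order terms in } A, \nabla A),
$$
where, by Proposition~\ref{prop-H}, $|A|\le C$ uniformly in $\e$ on $[0,\min(T_\e,T_0))$, so every term that is polynomial in $A$ alone is bounded, and the genuinely troublesome terms are those quadratic in $\nabla A$ with coefficients that blow up like powers of $1/H$ (these arise from the reaction term $\ddot\kappa_\e(\nabla A,\nabla A)$, which carries a factor $1/H^3$, and from $1/H$ appearing when one commutes derivatives past $\kappa_\e$).

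The key observation is that equation~\eqref{eqn-H}, rewritten as an equation for $H^{-2}$ (or for $-\log H$), produces a \emph{favorable} zero-order-in-curvature but quadratic-in-$\nabla A$ term. Indeed, from \eqref{eqn-H},
$$
\frac{\partial}{\partial t} H^{-2} = \mathcal{L}_\epsilon H^{-2} - 6 H^{-4} a_\epsilon^{ik}\nabla_i H \nabla_k H - 2 H^{-3}\,\ddot\kappa_\e(\nabla A,\nabla A) - 2 H^{-3}(2\kappa^2+\e|A|^2)\,H,
$$
and the term $-2H^{-3}\ddot\kappa_\e(\nabla A,\nabla A)$ is \emph{positive and of order $H^{-6}|\nabla A|^2$} by the computation~\eqref{eq-concavity}–\eqref{eq-definite-H} that was already used in the proof of Proposition~\ref{prop-H} (there $\ddot\kappa_\e(\nabla A,\nabla A)\cdot H\le 0$, so dividing by $H^4>0$ keeps the sign). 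Therefore I would consider the test function
$$
Q := |\nabla A|^2 \,H^{-2} \quad\text{(or, more robustly, } Q := |\nabla A|^2\,(H^{-2}+1)\text{ times a small constant, or } |\nabla A|^2 + \Lambda H^{-2}\text{)},
$$
whose evolution equation, by the product rule, contains the good negative contribution $-c\,H^{-2}|\nabla A|^4$ (coming from $H^{-6}|\nabla A|^2 \cdot |\nabla A|^2 \gtrsim H^{-2}(H^{-2}|\nabla A|^2)^2$ once $H$ is not yet known bounded below — care is needed here, see below) which is strong enough to absorb the bad $H^{-\text{const}}|\nabla A|^4$ terms in $\partial_t Q$, leaving an inequality of the form $\partial_t Q_{\max} \le C(1 + Q_{\max}^2)$ on a short time interval, provided only that $Q$ is finite at $t=0$. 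Finiteness at $t=0$ is where the hypothesis $\Sigma_0\in C^{2,1}$ enters: it gives $\nabla A\in C^{0,1}\subset L^\infty$ and $H>0$ bounded below at time zero, so $Q(\cdot,0)\le C_0$; by continuity $Q_\e$ stays bounded, uniformly in $\e$, on $[0,\min(T_\e,T_0))$ for some uniform $T_0>0$. This yields the bound $|\nabla A|\le C$ once we also have the lower bound on $H$, so the two estimates must be bootstrapped together: feeding $|\nabla A|\le C\,H$ (which is what boundedness of $Q$ gives) back into \eqref{eqn-H} shows that the negative gradient term there is dominated by $C H^3\le C H$, whence $\partial_t H \ge \mathcal{L}_\epsilon H - C H$ and the maximum principle gives $H\ge \delta := e^{-CT_0}\min_{\Sigma_0}H > 0$ uniformly in $\e$ and $t\in[0,\min(T_\e,T_0))$.

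The main obstacle, and the point requiring genuine care, is the interplay of the two estimates before either is established: a priori $H$ could be very small, so the coefficients $H^{-\text{const}}$ in the evolution of $|\nabla A|^2$ and of $Q$ are not yet controlled, and one cannot simply invoke Proposition~\ref{prop-H}. The resolution is that \emph{all} negative powers of $H$ that appear — both the bad ones in $\partial_t|\nabla A|^2$ and the good one in $\partial_t H^{-2}$ — are homogeneous in a way that, after forming $Q=|\nabla A|^2 H^{-2}$, the good term $-c\,H^{-2}|\nabla A|^4 = -c\,Q\cdot(H^{-2}|\nabla A|^2)\cdot H^{?}$ must be checked to out-scale the bad term $+C\,H^{-m}|\nabla A|^4$ for the correct exponent $m$ coming from $\ddot\kappa_\e$ and the commutator terms; one may need to choose the power of $H$ in $Q$ (e.g. $Q=|\nabla A|^2 H^{-\beta}$ for a suitable $\beta>0$) or add a Bernstein-type auxiliary term $A\cdot\nabla A$ or a large multiple of $|A|^2$ to make the Cauchy–Schwarz absorption close. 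Once the exponents are matched and the short-time ODE comparison $\dot Q_{\max}\le C(1+Q_{\max}^2)$ is obtained, the rest is routine: shrink $T_0$ if necessary so the ODE does not blow up on $[0,T_0]$, conclude $Q\le C$, hence $|\nabla A|\le C H \le C$ (using Proposition~\ref{prop-H} for the upper bound on $H$) and $H\ge\delta$, all uniformly in $\e$.
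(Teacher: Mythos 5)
Your key step has the wrong sign, and this undermines the whole scheme. You correctly record that $\ddot\kappa_\e(\nabla A,\nabla A)\cdot H\le 0$ (that is \eqref{eq-definite-H}), so for $H>0$ the term $-2H^{-3}\ddot\kappa_\e(\nabla A,\nabla A)$ in your displayed equation for $\partial_t H^{-2}$ is indeed \emph{nonnegative}. But a nonnegative term on the right-hand side of the evolution of $H^{-2}$ is \emph{unfavorable}: it pushes $H^{-2}$ up, i.e.\ drives $H$ toward $0$, which is exactly the degeneration you are trying to rule out. This is the same contribution that the paper treats as the bad term $\le C\,w/H^5$ in the evolution of $1/H$. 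Consequently, when you form $Q=|\nabla A|^2 H^{-2}$ (or $|\nabla A|^2+\Lambda H^{-2}$), the piece you call the ``good negative contribution $-c\,H^{-2}|\nabla A|^4$'' is actually a \emph{positive} term of order $H^{-6}|\nabla A|^4$, and the Cauchy--Schwarz absorption you propose has nothing to absorb into: there is no negative $|\nabla A|^4$ term to be found anywhere in the calculation.

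The true good term, which your proposal never identifies, lives in the evolution of $w=\sum_{i,j}|\nabla h_i^j|^2$ and is quadratic in the \emph{second} derivatives of $A$: it is the dissipation $-2\dot\kappa_\e(\nabla^2 h_i^j,\nabla^2 h_i^j)\sim -H^{-2}|\nabla^2 A|^2$ coming from the commutation $\mathcal L_\e(|\nabla A|^2) = 2\langle \nabla A,\mathcal L_\e\nabla A\rangle + 2\dot\kappa_\e(\nabla^2 A,\nabla^2 A)$. The dangerous cross-terms in $\partial_t w$ (the $\mathcal{O},\mathcal{P},\mathcal{R}$ of the paper) each carry a single factor of $\nabla^2 A$ together with two factors of $\nabla A$ and a coefficient $H^{-3}$; that one factor of $\nabla^2 A$ is what makes the Cauchy--Schwarz absorption into $-H^{-2}|\nabla^2 A|^2$ possible, at the cost of a remainder of order $w^2/H^4$. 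The paper then does \emph{not} cancel the remaining $|\nabla A|^4/H^\alpha$-type terms at all: together with the bad $Cw/H^5$ term from $\partial_t(1/H)$, they are simply bounded by powers of $\mathcal V=w+1/H$ via Young's inequality, giving the closed short-time ODE $\dot f\le C(f+f^6)$ for $f=\mathcal V_{\max}$. So to repair your proposal you would need to (i) correct the sign analysis, (ii) locate the genuine dissipative $|\nabla^2 A|^2$ term and carry out the Cauchy--Schwarz absorption against it, and (iii) abandon the attempt to cancel the quartic-in-$\nabla A$ terms and instead fold them, polynomially in $\mathcal V$, into a short-time ODE; at that point you essentially recover the paper's argument.
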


\begin{proof}
We will first compute the evolution equation for $\sum_{i,j}|\nabla h_i^j|^2$. Lets first
see how $h_i^j$ evolves. We have 
$$\frac{\partial}{\partial t}h_i^j = \mathcal{L}_{\e}(h_i^j) + 
\frac{\partial^2 \kappa_{\e}}{\partial h_q^p\partial h_m^l}
\nabla^i h_q^p \nabla^j h_m^l + \frac{\partial \kappa_{\e}}{\partial h_m^l}h_p^l h_m^p h_j^i.$$
>From the previous equation, commuting derivatives we get
\begin{equation}
\label{eq-der}
\begin{split}
\frac{\partial}{\partial t} \nabla_r h_i^j &=  
\mathcal{L}_{\e}(\nabla_r h_i^j) + \frac{\partial^2\kappa_{\e}}{\partial h_p^q
\partial h_n^s}\nabla_r h_n^s\nabla_p\nabla_q h_i^j + 
\frac{\partial \kappa_{\e}}{\partial h_{pq}}R_{rpqm}\nabla_m h_i^j \\
&+ \frac{\partial^3 \kappa_\e}{\partial h_p^q\partial h_m^l\partial h_n^s}\nabla_rh_n^s\nabla^ih_p^q\nabla_j h_m^l  +
\frac{\partial^2\kappa_{\e}}{\partial h_pq\partial h_m^l}\nabla_r\nabla^ih_p^q \nabla_j h_m^l + \\
&+ \frac{\partial^2\kappa_{\e}}{\partial h_pq\partial h_m^l}\nabla^ih_p^q \nabla_r\nabla_j h_m^l +
\frac{\partial^2\kappa_{\e}}{\partial h_m^l\partial h_n^s}\nabla_r h_n^sh_p^lh_m^ph_j^i + \\
&+ \frac{\partial\kappa_{\e}}{\partial h_m^l}\nabla_r h_p^l h_m^p h_i^j 
+ \frac{\partial\kappa_{\e}}{\partial h_m^l}h_p^l\nabla_rh_m^p h_i^j +
 \frac{\partial\kappa_{\e}}{\partial h_m^l}h_p^l h_m^p \nabla_rh_i^j.
\end{split}
\end{equation}
Let $w = \sum_{i,j}|\nabla h_i^j|^2$. Since $|\nabla h_i^j|^2 = g^{pq}\nabla_p h_i^j\nabla_q h_i^j$ and 
$\frac{\partial g_{ij}}{\partial t} = 2\kappa_{\e}h_{ij}$, we get
\begin{equation*}
\begin{split}
\label{eq-nabla}
\frac{\partial w}{\partial t}  &= -4g^{pa}g^{qb}\kappa_{\e}h_{ab}\nabla_p h_i^j\nabla_q h_i^j
+ \mathcal{L}_{\e}(w)  - 2\dot{\kappa}_{\e}(\nabla^2h_i^j, \nabla^2 h_i^j) +   \\
&+ g^{pq}\frac{\partial\kappa_{\e}}{\partial h_a^b}R_{pabs}\nabla_sh_i^j\nabla_q h_i^j + 
\frac{\partial^2\kappa_{\e}}{\partial h_p^q\partial h_m^l} g^{ra}\nabla_r h_m^l 
\nabla_p\nabla_q h_i^j \nabla_a h_i^j +   \\
&+ \frac{\partial^3}{\partial h_p^q\partial h_m^l \partial h_n^s} g^{ra}\nabla_r h_n^s 
\nabla^i h_p^q\nabla_j h_m^l \nabla_a h_i^j
+ \frac{\partial^2\kappa_{\e}}{\partial h_p^q\partial h_m^l} g^{ra} 
\nabla_r\nabla^i h_p^q\nabla_j h_m^l\nabla_a h_i^j  +   \\
&+ \frac{\partial^2\kappa_{\e}}{\partial h_p^q\partial h_m^l} g^{ra} 
\nabla^i h_p^q\nabla_r\nabla_j h_m^l\nabla_a h_i^j 
+ \frac{\partial^2\kappa_{\e}}{\partial h_m^l\partial h_n^s}g^{ra}\nabla_rh_n^s h_p^lh_m^p h_i^j +   \\
&+ \frac{\partial\kappa_{\e}}{\partial h_m^l}g^{ra}\nabla_r h_p^l h_m^p h_i^j \nabla_a h_i^j +
\frac{\partial\kappa_{\e}}{\partial h_m^l}g^{ra} h_p^l \nabla_r h_m^p h_i^j \nabla_a h_i^j  +   \\
&+  \frac{\partial\kappa_{\e}}{\partial h_m^l}g^{ra} h_p^l h_m^p \nabla_rh_i^j \nabla_a h_i^j. 
\end{split}
\end{equation*}
Whenever we see $i$ and $j$ in the previous equation we assume that we are summing over all indices $i$ and $j$. Also,  
$$\dot{\kappa}_{\e}(\nabla^2 h_i^j, \nabla^2 h_i^j) = \frac{\partial\kappa_{\e}}{\partial h_p^q}g^{pq}g^{cd}
\nabla_q\nabla_c h_i^j\nabla_p\nabla_d h_i^j.$$ 
Notice that since $|A| \le C$ for all $t\in [0, \min (T_{\e},T_0)\, )$, we have 
$$|\frac{\partial^2\kappa_{\e}}{\partial h_p^q\partial h_m^l}| \le \frac{C_1}{H^3} 
\quad \mbox{and} \quad |\frac{\partial^3\kappa_{\e}}{\partial h_p^q\partial h_m^l\partial h_n^s}| \le \frac{C_1}{H^4}$$
for a uniform constant $C$.

We next compute the evolution equation for ${1}/{H}$ from the evolution of
$H$, namely  equation \eqref{eqn-H}. By direct computation we get
that
$$
\frac{\partial}{\partial t}(\frac{1}{H}) = 
\mathcal{L}_{\e}(\frac{1}{H}) - \frac{2}{H^3}\frac{\partial\kappa_{\e}}{\partial h_p^q}
\nabla_p H\nabla_q H - \frac{1}{H^2}\frac{\partial^2\kappa_{\e}}
{\partial h_p^q\partial h_m^l}\nabla^i h_p^q\nabla_i h_m^l
- \frac{1}{H}\frac{\partial\kappa_{\e}}{\partial h_m^l} h_p^l h_m^p.$$  
Taking away the second negative term on the right hand side we easily conclude the
differential inequality
$$\frac{\partial}{\partial t}(\frac{1}{H})\le \mathcal{L}_{\e}(\frac{1}{H}) + \frac{C\,w}{H^5} + \frac{C}{H}.$$
Combining   the evolution equations of $w$ and $1/H$  we will  now compute  the evolution equation for 
$$\mathcal{V} := w + \frac{1}{H}.$$ We look at the point $(P,t)$ at which $\mathcal{V}$ 
achieves its maximum at time $t$ and choose coordinates around $P$ so that both,
the second fundamental form and the metric matrix are diagonal at $P$.  Using 
the exact form of coefficients $a^{ik}_\e = \frac{\partial\kappa_{\e}}{\partial h_p^q}$
computed in (\ref{eq-coeff}) we get
\begin{equation}
\begin{split}
\label{eq-good}
-2\sum_{i,j}\dot{\kappa}_{\e}(\nabla^2 h_i^j, \nabla^2 h_i^j) &=
-2\sum_{i,j} \frac{\partial\kappa_{\e}}{\partial h_p^q} 
g^{cd}\nabla_q\nabla_c h_i^j \nabla_p\nabla_d h_i^j\\
&= -\frac{2}{H^2}\sum_{i,j}[\, \lambda_2^2 (\nabla_1\nabla_1h_i^j)^2 +
\lambda_1^2(\nabla_2\nabla_2 h_i^j)^2 \\
&\,\,\, \quad+ (\lambda_1^2 +
\lambda_2^2)\, (\nabla_1\nabla_2 h_i^j)^2 \, ].
\end{split}
\end{equation}
Our goal is to absorb all the remaining terms that contain the second order derivatives,
appearing  in the evolution equation for $\mathcal{V}$, in the good term (\ref{eq-good}).
By looking at the evolution equation of $w$,  we see that those second order terms are
$$\mathcal{O} = \frac{\partial^2\kappa_{\e}}{\partial h_p^q\partial h_m^l}
g^{ra}\nabla_r h_m^l\nabla_p\nabla_q h_i^j\nabla_a h_i^j,$$
$$\mathcal{P} = \frac{\partial^2\kappa_{\e}}{\partial h_p^q\partial h_m^l}
g^{ra}\nabla_r\nabla^i h_p^q\nabla_j h_m^l\nabla_a h_i^j$$
and 
$$\mathcal{R} = \frac{\partial^2\kappa_{\e}}{\partial h_p^q\partial h_m^l}
g^{ra}\nabla^ih_p^q\nabla_r\nabla_jh_m^l\nabla_a h_i^j$$
where we understand summing over all indices.
Denote by $\xi_m^l := \nabla_r h_m^l$ and by $\eta_p^q := \nabla_p\nabla_q h_i^j$.
If we specify the coordinates around the maximum point $P$ in which $W$ and $g$
are diagonal, by (\ref{eq-ddot})
\begin{equation}
\begin{split}
\label{equation-O}
\mathcal{O} &= g^{ra}\nabla_r h_i^j \,  \left (\sum_{p,q}\frac{\partial^2\kappa}{\partial\lambda_p\lambda_q}
\xi_p^p\eta_q^q + \sum_{p\neq q} \frac{\frac{\partial\kappa}{\partial\lambda_p}
- \frac{\partial\kappa}{\partial\lambda_q}}{\lambda_p - \lambda_q}\xi_p^q\eta_p^q \right ) \\
&= \nabla_r h_i^j \left (-\frac{2\lambda_2^2}{H^3}\nabla_r h_1^1\nabla_1\nabla_1 h_i^j
- \frac{2\lambda_1^2}{H^3}\nabla_r h_2^2\nabla_2\nabla_2 h_i^j +
\frac{2\lambda_1\lambda_2}{H^3}\nabla_r h_1^1\nabla_2\nabla_2 h_i^j + 
\right .\\
&+ \left . \frac{2\lambda_1\lambda_2}{H^3}\nabla_r h_2^2\nabla_1\nabla_1 h_i^j 
- \frac{1}{H}(\nabla_1\nabla_2 h_i^j\nabla_r h_1^2 + \nabla_2\nabla_1 h_i^j
\nabla_r h_2^1 \right ). 
\end{split}
\end{equation}
Since $|A| \le C$ and $$\frac{1}{H} = \frac{\lambda_1 + \lambda_2}{H^2} \le 
\frac{|\lambda_1| + |\lambda_2|}{H^2}$$
by the  Cauchy-Schwartz inequality,
we can estimate $\mathcal{O}$ term by term, namely 
\begin{eqnarray*}
\left | 2\nabla_r h_i^j\frac{\lambda_2^2}{H^3}\nabla_r h_1^1\nabla_1\nabla_1 h_i^j \right | &=&
\left  |2\nabla_r h_i^j\frac{\lambda_2}{H^2}\nabla_r h_1^1 \right |\, \left |\frac{\lambda_2}{H}\, 
\nabla_1\nabla_1 h_i^j \right | \\
&\le& C\frac{w^2}{H^4} + \beta_1\frac{\lambda_2^2}{H^2}|\nabla_1\nabla_1 h_i^j|^2 \\
&\le&  C\frac{w^2}{H^4} + \beta_1\sum_{ij}\dot{\kappa_\e}(\nabla^2h_i^j\nabla^2 h_i^j)
\end{eqnarray*}
and
\begin{eqnarray*}
\left |2\frac{\lambda_1\lambda_2}{H^3}\nabla_r h_i^j\nabla_r h_1^1\nabla_1\nabla_1 h_i^j \right | &=&
\left |2\frac{\lambda_1}{H^2}\, \nabla_r h_1^1\nabla_r h_i^j \right | \, \left |\frac{\lambda_2}{H}\nabla_1\nabla_i h_i^j \right | \\
&\le& C\frac{w^2}{H^4}  + \beta_1\frac{\lambda_2^2}{H^2}|\nabla_1\nabla_1 h_i^j| \\
&\le& C\frac{w^2}{H^4} + \beta_1\sum_{ij}\dot{\kappa}_\e(\nabla^2 h_i^j\nabla^2 h_i^j)
\end{eqnarray*}
and
\begin{eqnarray*}
\left | \frac{1}{H}\nabla_1\nabla_2 h_i^j\nabla_rh_1^2\nabla_rh_i^j \right | &\le&
\left |\frac{1}{H^2}\nabla_r h_1^2\nabla_r h_i^j \right |\, \left  |\frac{\lambda_1 + \lambda_2}{H}\nabla_1\nabla_2 h_i^j \right |\\
&\le& C\frac{w^2}{H^4} + \beta_1(\lambda_1^2 + \lambda_2^2)|\nabla_1\nabla_2 h_i^j|^2  \\
&\le& C\frac{w^2}{H^4} + \beta_1\sum_{ij}\dot{\kappa}_\e(\nabla^2 h_i^j,\nabla^2 h_i^j)
\end{eqnarray*}
where $\beta_1 > 0$ is a uniform small number. We can estimate other terms in $\mathcal{O}$ 
the same way and combining all those estimates yield
\begin{equation*}
|\mathcal{O}| \le C\frac{w^2}{H^4} + \beta \sum_{i,j}
\dot{\kappa}_{\e}(\nabla^2 h_i^j, \nabla^2 h_i^j)
\end{equation*}
where $\beta > 0$ is a small fixed number. 

In order to estimate $\mathcal{P}$, we would like to be able somehow to switch
the pair of indices $\{i,r\}$ with $\{p,q\}$ so that we reduce estimating 
$\mathcal{P}$ to the previous case of $\mathcal{O}$. We will use Gauss-Codazzi 
equations in the form
$$\nabla_l h_{ij} = \nabla_i h_{lj}.$$
In our special coordinates at the point we have 
\begin{equation}
\begin{split}
\label{eq-reduction}
\nabla_r\nabla^i h_p^q &= \nabla_r\nabla^i(h_{ps}g^{qs}) =
\nabla_r(g^{ij}\nabla_j(h_{ps}g^{qs})) \\
&= \nabla_r(g^{ij}g^{qs})
\cdot \nabla_j h_{ps} + g^{ij}g^{qs}\nabla_r\nabla_j h_{ps} +
h_{ps}\nabla_r(g^{ij}\nabla_j g^{qs}) \\
&= \nabla_p\nabla_q h_{rj} + \nabla_r(g^{ij}g^{qs})
\cdot \nabla_j h_{ps} + h_{pp}\nabla_r(g^{ij}\nabla_j g^{pq}) 
\end{split}
\end{equation}

\noindent We have the following:

\noindent{\em Claim.}
There is a uniform constant $\tilde{C}$ depending on $C$, so that
\begin{equation} \label{claim-bound-g}
|g(\cdot,t)|_{C^2} \le \tilde{C}
\end{equation}
as long as $|A| \le C$.

\noindent 
To prove \eqref{claim-bound-g} we observe that in geodesic
coordinates $\{x_i\}$ around a point $p$, which corresponds to the
origin in geodesic coordinates, we have
\begin{equation}
\label{equation-g}
g_{ij}(x) = \delta_{ij} + \frac{1}{3}R_{ipqj}x^px^q + O(|x|^3)
\end{equation}
and that an easy computation shows that
$$\nabla_p\nabla_q g_{ij} (0) = -\frac{1}{3}R_{ipqj}.$$
By the Gauss equations, we have $R_{ipqj} = h_{iq}h_{pj} - h_{ij}h_{pq}$, which yields to 
$|\nabla_p\nabla_q g_{ij}| \le \tilde{C}$ as long as  $|A| \le C$. This together
with (\ref{equation-g}) proves the Claim.

\medskip

Combining  \eqref{eq-reduction}- \eqref{claim-bound-g}, we obtain as in the
estimate of $\mathcal{O}$, the bound 
$$|\mathcal{P}| \le \frac{Cw^2}{H^4} + \beta\sum_{i,j}\dot{\kappa}_{\e}
(\nabla^2 h_i^j, \nabla^2 h_i^j).$$
Similarly, we get the estimate for $\mathcal{R}$. We conclude that 
\begin{equation}
\label{eqn-oooo}
|\mathcal{O}| + |\mathcal{P}| + |\mathcal{R}| \le 
\frac{Cw^2}{H^4} + 3\beta\sum_{i,j}\dot{\kappa}_{\e}
(\nabla^2 h_i^j, \nabla^2 h_i^j).
\end{equation}
Choosing  $\beta > 0$ so that $3\beta < 2$ in \eqref{eqn-oooo} and analyzing  the right hand side
of the evolution of $w$ term by term,  we obtain the 
following estimate at  the maximum point $P$  of $\mathcal{V}$ at time $t$ 
$$\frac{d \, \mathcal{V}_{\max}}{dt} \le Cw  + \frac{Cw^2}{H^4}
+ \frac{C\sqrt{w}}{H^3} + \frac{Cw}{H^5} + \frac{C}{H}.$$
Young's inequlity, implies the estimates
$$\frac{w^2}{H^4} \le w^6 + \frac{1}{H^6} \le \mathcal{V}^6$$
and
$$\frac{w}{H^5} \le w^6 + \frac{1}{H^6} \le \mathcal{V}^6$$
and
$$\frac{\sqrt{w}}{H^3} \le w + \frac{1}{H^6} \le \mathcal{V} + \mathcal{V}^6.$$

\smallskip

\noindent Hence, denoting  by $f(t) = \mathcal{V}_{\max}(t)$ we obtain
\begin{equation}
\label{eq-good-f}
\frac{d f}{dt} \le C\, (f + f^6)
\end{equation}
which implies the existence of uniform constants $\bar C$ and $T_0$,
depending only on $C$ and $f(0)$, so that
$$\sup_{\Sigma_t^{\e}}\, (\frac{1}{H} + \sum_{i,j}|\nabla h_i^j|^2) \le \bar{C},
\qquad \mbox{for all} \,\,  t \in [0,\min  (T_\e, T_0)\, ).$$
This finishes the proof of the lemma. 
\end{proof}



Having all the curvature estimates  (that are proved above),
we can justify the short time existence of the $C^{2,1}$-solution
to the (\ref{equation-HMCF}).

\begin{proof}[Proof of Theorem \ref{thm-STE}]
For every $\e > 0$,  let $T_{\e}$ be the maximal time so that 
$$|A|_{C^1(\Sigma_t^{\e})} \le   C, \qquad \mbox{and} \qquad H \ge \delta >0$$
where $C, \delta$ are constants taken from  Proposition \ref{prop-nabla-curv}. 
Take now $\e_i\to 0$.
We have that $|A|_{C^1(\Sigma_t^{\e_i})} \le C$,
which implies $|F_{\e_i}|_{C^{2,1}} \le C$, for all   $t\in [0,T_0]$.
By the Arzela-Ascoli theorem there is a subsequence so that 
$F_{\e_i}(\cdot,t)\stackrel{C^{2,1}}{\to} F(\cdot,t)$,
where $F(\cdot,t)$ is a $C^{2,1}$ solution to (\ref{equation-HMCF}).
Since we have a comparison principle for $C^{2,1}$ solutions to 
(\ref{hmcf0}) as discussed above, the uniqueness of a $C^{2,1}$ solution 
immediately follows.
\end{proof}


\section{Long time existence for the $\e$-flow}\label{section-ltee}
In this section we will study the long time existence for the
$\e$-regularized flow \eqref{equation-reg} assuming that $\Sigma_0$ is
an arbitrary smooth surface with mean curvature $H > 0$, Euler
characteristic $\chi(\Sigma_0) > 0$ and it is star-shaped with respect
to the origin. Throughout the section we fix $\e >0$ sufficiently
small, we denote by $\Sigma_t^\e$ the surface evolving by
\eqref{equation-reg} and, to simplify the notation, we drop the index
$\e$ from $F,\nu,H,G,\kappa,A,g_{ij}, h_{ij}$ etc.  The $\e$-flow has
one obvious advantage over (\ref{hmcf0}), it is 
not degenerate and therefore it has smoothing properties. Indeed, it
follows from the Krylov and Schauder estimates that a $C^{1,1}$
solution of \eqref{equation-reg} is $C^\infty$ smooth.

Assume that $\Sigma_t^\e$ is a solution of \eqref{equation-reg} on
$[0,T_\e)$ and let us consider the evolution equation for the area
form $d\mu_t$, namely
$$\frac{\partial }{\partial t} \, d\mu_t  = -2 \, ( \frac{G}{H} + \e\, H)\,  \frac{H}{2} \, d\mu_t 
= -(G + \e\, H^2 ) \, d\mu_t.$$ Integrating it over the surface
$\Sigma_t^\e$ we obtain the following ODE for the total area
$\mu_t(\Sigma_t^\e)$ of the surface $\Sigma_t^\e$
$$\frac{d}{d t}\,  \mu_t(\Sigma_t^\e)  = - \int_{\Sigma_t^\e} (G + \e\, H^2 ) \, d\mu_t.$$
By the Gauss-Bonnet formula we have
$$ \int_{\Sigma_t^\e}  G  \, d\mu_t = 2\pi\, \chi(\Sigma_t).$$ 
Since $\Sigma_0$ is a surface with positive Euler characteristic, then
by the uniformization theorem $\chi(\Sigma_t) = 2$ and therefore we
conclude the equation
\begin{equation}
\label{eqn-vol}
\frac{d}{d t} \,  \mu_t(\Sigma_t^\e) = -  4\, \pi -  \e\, \int_{\Sigma_t^\e}  H^2  \, d\mu_t.
\end{equation}
Denote by $T_\e$ the maximum time of existence of \eqref{equation-reg}. Integrating
\eqref{eqn-vol} in time from $0$ to $T_\e$,  solving with respect of $T_\e$ and
using that $\mu_t(\Sigma_t^\e) \geq 0$, gives
$$T_\e \leq \frac 1{4\pi}  \mu_0(\Sigma_0) - \frac \e{4\pi} \, \int_0^{T_\e} \int_{\Sigma_t^\e}  H^2  \, d\mu_t.$$
This, in particular shows that
\begin{equation}\label{eqn-Te}
T_\e \leq \frac 1{4\pi} \mu_0(\Sigma_0) 
\end{equation}
where $ \mu_0(\Sigma_0) $ is the area of the initial surface $\Sigma_0$. 

Our goal is to prove the following result, concerning the long time existence of
the flow \eqref{equation-reg}. We will also establish curvature bounds and curvature pinching estimates which are independent of $\e$. 

\begin{thm}\label{thm-e}
Let $\Sigma_0$ be a compact  star-shaped  hyper-surface in $\R^3$ which is of class $C^{1,1}$  and
has strictly positive mean curvature  $H > 0$. Then, there exists a maximal time 
of existence $T_\e$  of a smooth \eqref{equation-reg} flow $\Sigma_{\e}^t$ such that
either:
\begin{enumerate}[(i)]
\item $H(P_t,t) \to 0$, as $t \to T_\e$  at some points $P_t  \in \Sigma_t^\e$, or
\item $\Sigma_{\e}^t$ shrinks  to a point as
$t \to T_{\e}$ and  $T_\e$ is given explicitly by
\begin{equation}
\label{eq-ext-time}
T_\e =  \frac 1{4\pi}  \mu_0(\Sigma_0)- \frac {\e}{4\pi} \, \int_0^{T_\e} \int_{\Sigma_t^\e}  H^2  \, \mu_t
\end{equation}
where $\mu_0(\Sigma_0)$ is the total area of $\Sigma_0$. Moreover,   $\int_{\Sigma_t^{\e}} H^2\, d\mu_t$ is uniformly bounded for all $t\in [0,T_{\e})$, independently of $\e$. 
\end{enumerate}
\end{thm}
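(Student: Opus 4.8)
The plan is to establish Theorem~\ref{thm-e} in three stages: first show that the regularized flow exists as long as $H$ stays bounded below away from zero; second, analyze what happens when $H \to 0$ fails to occur, using the star-shapedness to control the geometry; and third, read off the explicit extinction time from \eqref{eqn-vol} and extract the uniform-in-$\e$ bound on $\int H^2\, d\mu_t$.

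First, by Proposition~\ref{prop-ste-e} the $\e$-flow starts and, being strictly parabolic (the coefficients $a_\e^{ik}$ in \eqref{eq-coeff1} are strictly positive definite with a lower bound depending on $\e$ and an upper bound depending on $|A|$ and a lower bound for $H$), it enjoys Krylov--Schauder smoothing, so a $C^{1,1}$ bound propagates to $C^\infty$. The standard continuation principle for such flows says that if $[0,T_\e)$ is the maximal interval, then as $t \to T_\e$ either the curvature $|A|$ blows up or $H \to 0$ somewhere (the latter being the only way the operator degenerates). So the real content is to show that $|A|$ staying bounded together with $H$ bounded below forces the surface to have shrunk to a point. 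Here I would use the support function / star-shapedness: writing $\Sigma_t^\e$ as a radial graph $u(\cdot,t)$ over $S^2$, star-shapedness with respect to the origin is preserved (this needs a short maximum-principle argument on the evolution of the inner radius, or on $\langle F,\nu\rangle$, showing $\langle F,\nu\rangle > 0$ is preserved — one computes $\partial_t \langle F,\nu\rangle = \mathcal L_\e \langle F,\nu\rangle + (\text{good terms})$ and checks signs). Combined with $|A| \le C$ and a lower bound on $H$, one gets a priori control on $u$ in $C^{2}$ up to time $T_\e$, so the only way the flow can stop with bounded curvature and $H \ge \delta$ is that the enclosed volume (equivalently the inradius) goes to zero, i.e. the surface shrinks to a point. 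This dichotomy is exactly (i) versus (ii).

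Next, for case (ii): if the flow shrinks to a point at $T_\e$, then $\mu_t(\Sigma_t^\e) \to 0$ as $t \to T_\e$, so integrating \eqref{eqn-vol} from $0$ to $T_\e$ and using $\mu_{T_\e} = 0$ gives precisely the identity \eqref{eq-ext-time}. For the uniform bound on $\int_{\Sigma_t^\e} H^2\, d\mu_t$: I would derive an evolution equation for $\int_{\Sigma_t^\e} H^2\, d\mu_t$ (or more robustly, use the Gauss--Bonnet structure). Actually the cleanest route is via the Willmore-type quantity: compute $\frac{d}{dt} \int H^2 \, d\mu$ using \eqref{eqn-H} and $\partial_t d\mu_t = -(G+\e H^2)d\mu_t$; integrating by parts, the $\mathcal L_\e H^2$ term and the bad gradient term from the second derivative of $\kappa_\e$ combine (as in the proof of Proposition~\ref{prop-H}, where $D^2\kappa_\e \cdot H \le 0$), leaving a zero-order term that is controlled by $\int (2\kappa^2 + \e|A|^2) H^2\, d\mu$; by the pointwise inequality \eqref{eqn-bound} this is $\le C\int (H^2+\kappa_\e^2) H^2\, d\mu$. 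To close this into a bound uniform in $\e$ on a fixed time interval, one needs the curvature pinching estimate $\kappa_\e^2 \le C H^2$ (or at least $|A|^2 \le C H^2$), which should be the main geometric estimate of this section — star-shapedness and the evolution of $\lambda_1/\lambda_2$ or of $|A|^2/H^2$ should give it. Then Grönwall, using $T_\e \le \frac{1}{4\pi}\mu_0(\Sigma_0)$ from \eqref{eqn-Te} to bound the time interval, yields $\int H^2\, d\mu_t \le C(\Sigma_0)$ independent of $\e$.

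The main obstacle I anticipate is exactly the curvature pinching / control of $|A|$ and $H$ in terms of each other, uniformly in $\e$, under only the star-shaped hypothesis (with $G$ changing sign) — the convexity that makes these flows tractable is absent, and the degenerate term $-\dot\kappa_\e(\nabla^2 h,\nabla^2 h)$ in the evolution of $|A|^2$ is only useful where $H$ is large. I expect one has to run a combined maximum-principle argument on a quantity like $|A|^2 + \Lambda/H$ or $|A|^2/H^2$ (mimicking Proposition~\ref{prop-nabla-curv}), plus exploit star-shapedness through the term $\langle F,\nu\rangle > 0$ to get a scale-invariant lower bound relating $H$ to the inradius, which is what ultimately prevents the $H \to 0$ alternative from being vacuous and forces the clean dichotomy. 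The bookkeeping of the $\e$-dependent terms — making sure every constant that appears is genuinely independent of $\e$ — will be the tedious but essential part.
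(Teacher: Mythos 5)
Your high-level scaffolding is sound (the dichotomy, the extinction-time identity from $\mu_{T_\e}\to 0$, and the need for pinching estimates via star-shapedness), but there is a genuine gap at the crux of the argument, plus a misstatement that leads you away from the paper's route.

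First, the misstatement. You assert that $|A|\le C$ together with $H\ge\delta$ forces the surface to have shrunk to a point at $T_\e$. In fact the opposite holds: if $|A|$ stays bounded and $H\ge\delta$ up to $T_\e$, the surface $\Sigma^\e_{T_\e}$ is a smooth $C^\infty$ surface with $H\ge\delta$ (via Krylov--Schauder on the uniformly parabolic concave operator) and Proposition~\ref{prop-ste-e} lets you restart the flow, contradicting maximality. So the conclusion of Proposition~\ref{thm-max-time} is that $\limsup_{t\to T_\e}|A|=\infty$. The real work in case (ii), which your proposal omits, is to show that \emph{blow-up of} $|A|$ forces the surface to contract to a point. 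This is done by a rescaling argument: pick $p_i,t_i$ with $Q_i=|A|(p_i,t_i)=\max_{[0,t_i]}|A|\to\infty$, rescale $\tilde F_i(\cdot,t)=Q_i(F(\cdot,t_i+t/Q_i^2)-p_i)$, and extract a smooth limit $\tilde\Sigma_0$ with $|\tilde A|\le 1$ and $|\tilde A|(0,0)=1$ (so it is not a plane). The pinching estimate $\lambda_1\le C_1\lambda_2+C_2$ from Proposition~\ref{lem-pinch-e} rescales to $\tilde\lambda_1^i\le C_1\tilde\lambda_2^i+C_2/Q_i$, so the limit is weakly convex with second fundamental form $\eta$-pinched, $\bar h_{ij}\ge\eta\bar H\bar g_{ij}$. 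One then runs the \emph{mean curvature flow} from $\tilde\Sigma_0$ using the Ecker--Huisken interior estimates for complete hypersurfaces; the pinching persists, the strong maximum principle makes it strictly convex, and Hamilton's theorem on complete strictly convex hypersurfaces with pinched second fundamental form forces compactness. Compactness of the blow-up limit then gives $\diam(\Sigma_{t_i})\le C/Q_i\to 0$, and a comparison argument (Claim~\ref{claim-shrink}: $\partial_t|F-q|^2=\mathcal L_\e|F-q|^2-2|A|^2/H^2$) shows $\Sigma_t$ converges to a point as $t\to T_\e$. Without this blow-up analysis the dichotomy is not established.

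Second, on the uniform bound of $\int_{\Sigma_t^\e}H^2\,d\mu_t$: your proposed route via the evolution of $\int H^2 d\mu$ plus Gr\"onwall is far heavier than needed and would, in any case, require exactly the pointwise pinching you say you expect. The paper obtains the bound with no evolution equation at all: from Proposition~\ref{lem-pinch-e}, $\lambda_1\le C_1\lambda_2+C_2$ gives the pointwise inequality $|A|^2\le C_1 G+C_2$ (after a Cauchy--Schwarz step), and integrating against $d\mu_t$ and invoking Gauss--Bonnet, $\int G\,d\mu_t=4\pi$, together with $\mu_t(\Sigma_t^\e)\le\mu_0(\Sigma_0)$ from \eqref{eqn-vol}, yields the uniform bound immediately. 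Note also that the pinching estimates themselves are not obtained by maximum-principle bootstrapping of $|A|^2/H^2$ as you suggest; they come from the monotonicity of $\langle F_\e,\nu\rangle+2(t+\eta)\kappa_\e$ (Lemma~\ref{lemma-monotone}, Proposition~\ref{rem-useful}), a comparison argument that bounds $\langle F_\e,\nu\rangle$, and three elementary eigenvalue lemmas; the extra linear-in-$t$ term in the monotone quantity, not $\langle F,\nu\rangle$ alone, is what makes the maximum principle close.
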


Assume that (i) does not happen in Theorem \ref{thm-e}. Then, we have  
\begin{equation}
\label{eq-assump}
\min_{\Sigma_{\e}^t} H(\cdot,t)  \geq \delta >0, \,\,\, \mbox{for all} \,\,\, 
t\in [0,T_{\e})
\end{equation}
where $T_{\e}$ is the maximal existence time of a smooth flow
$\Sigma_{\e}^t$.

\begin{prop} 
\label{thm-max-time} Assuming that (i) doesn't happen in Theorem \ref{thm-e}, then the
maximal time of existence $T$ of the flow \eqref{equation-reg} satisfies 
$T  \leq \mu_0(\Sigma_0)/4\pi$ and
$$\limsup_{t\to T}|A| = \infty.$$
\end{prop}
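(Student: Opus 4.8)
The plan is to argue by contradiction: suppose $T < \infty$ and that $|A|$ stays bounded on $[0,T)$, say $|A| \le C_0$. Combined with the standing hypothesis \eqref{eq-assump}, namely $H \ge \delta > 0$, this would give that the speed $\kappa_\e = G/H + \e H$ and all its derivatives in the second fundamental form are bounded, and the flow \eqref{equation-reg} is uniformly parabolic (recall \eqref{eq-coeff1}). One would then like to run the same Bernstein-type argument as in Proposition \ref{prop-nabla-curv}: with $|A| \le C_0$ and $H \ge \delta$ on all of $[0,T)$, the quantity $\mathcal V = w + 1/H$ satisfies a differential inequality of the form $f' \le C(f + f^6)$ whose constants now depend only on $C_0$, $\delta$ and on a bound for the ambient geometry, hence $w = \sum_{i,j}|\nabla h_i^j|^2$ stays bounded up to time $T$. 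Bootstrapping via the Krylov–Schauder estimates for the uniformly parabolic equation \eqref{equation-reg} (as already invoked at the start of Section \ref{section-ltee}) then yields uniform $C^\infty$ bounds on $F(\cdot,t)$ on $[0,T)$, so $F(\cdot,t)$ converges in $C^\infty$ to a smooth immersion $F(\cdot,T)$ with $H(\cdot,T) \ge \delta > 0$. Short-time existence (Proposition \ref{prop-ste-e}) restarted from $\Sigma_T^\e$ would then extend the solution past $T$, contradicting maximality of $T$.

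The bound $T \le \mu_0(\Sigma_0)/4\pi$ is immediate and independent of the above: it is exactly \eqref{eqn-Te}, obtained by integrating the area evolution equation \eqref{eqn-vol} and using $\mu_t(\Sigma_t^\e) \ge 0$ together with $\e \int_0^{T_\e}\int H^2 \, d\mu_t \ge 0$. I would state this first, as it requires nothing beyond Gauss–Bonnet and the uniformization theorem ($\chi(\Sigma_t) = 2$), both of which are in place.

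For the main claim $\limsup_{t\to T}|A| = \infty$, the one point that needs care is the passage from $|\nabla A|$-bounds and $H \ge \delta$ to full regularity and extension of the flow. The Bernstein argument in Proposition \ref{prop-nabla-curv} was carried out only on a short interval $[0,\min(T_\e,T_0))$, but the only reason $T_0$ appeared there was to absorb lower-order terms into the evolution of $\mathcal V$; on $[0,T)$ with the a priori bounds $|A| \le C_0$ and $H \ge \delta$ already granted, the same ODE comparison $f' \le C(f+f^6)$ holds with $C$ depending only on $C_0, \delta$ and the initial data, so $\mathcal V$ remains finite as long as the solution exists, in particular on all of $[0,T)$ since $T < \infty$. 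One must also note that the constant in the Claim \eqref{claim-bound-g} bounding $|g|_{C^2}$ is controlled purely by $C_0$ via the Gauss equation, so all the estimates of Proposition \ref{prop-nabla-curv} go through verbatim.

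The main obstacle, and the step I would spend the most care on, is precisely this uniform-in-time regularity/continuation argument: making sure that with $|A|$ and $1/H$ both bounded on $[0,T)$ one genuinely gets a smooth limit surface at time $T$ on which $H$ is still bounded below, so that Proposition \ref{prop-ste-e} applies to continue the flow and produce the contradiction. Everything else — the area bound, the diagonalization computations, the Young's-inequality absorptions — is routine and already done in the preceding sections.
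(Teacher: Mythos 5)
Your overall structure is the same as the paper's---$T \le \mu_0(\Sigma_0)/4\pi$ from \eqref{eqn-vol}, and then a contradiction argument in which a uniform $|A|$ bound on $[0,T)$ is bootstrapped to smoothness of a limit surface $\Sigma_T^\e$ with $H(\cdot,T)\ge\delta$, after which Proposition~\ref{prop-ste-e} extends the flow and contradicts maximality.  The area bound and the final continuation step are fine and match the paper.

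The genuine gap is in the step where you try to carry the Bernstein estimate of Proposition~\ref{prop-nabla-curv} over to the whole interval $[0,T)$.  You write that the differential inequality $f' \le C(f+f^6)$ for $f = \mathcal V_{\max}$, with $C$ depending only on $C_0$, $\delta$ and the initial data, implies ``$\mathcal V$ remains finite as long as the solution exists, in particular on all of $[0,T)$ since $T < \infty$.''  That inference is false: the comparison ODE $y' = C(y+y^6)$ blows up at a finite time determined by $y(0)$ and $C$, which may well be smaller than $T$.  So the differential inequality only gives a short-time bound on $\mathcal V$ (this is exactly why $T_0$ appears in Proposition~\ref{prop-nabla-curv}); it does not give $\sup_{[0,T)}\mathcal V < \infty$.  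Nothing in your argument forces the ODE's blow-up time to exceed $T$, so the claimed $|\nabla A|$ bound up to $T$ is not established.

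The paper avoids this entirely.  It does \emph{not} first derive a $|\nabla A|$ bound by Bernstein and then bootstrap; instead, from the assumed $|A|\le C$ (which gives $C^{1,1}$, i.e.\ $W^{2,\infty}$, control on local graph representations) and from the uniform parabolicity furnished by $H\ge\delta$ via \eqref{eq-coeff1}, it invokes the Krylov (Evans--Krylov) $C^{2,\alpha}$ estimate for \emph{concave} fully nonlinear uniformly parabolic equations, using the concavity of $\kappa_\e$ for $H>0$ (cf.\ \eqref{eq-concavity}).  That estimate upgrades $C^{1,1}$ directly to $C^{2,\alpha}$ with no prior $\nabla A$ bound required, and Schauder bootstrapping then gives $C^\infty$.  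So the correct route is to drop the Bernstein detour and apply Krylov's estimate immediately; this is both simpler and the only way the time-uniform regularity actually holds.  The remainder of your argument---Lipschitz-in-time convergence of $F$, boundedness of $\partial_t g_{ij}$, continuation via Proposition~\ref{prop-ste-e}---is fine once the regularity step is repaired.
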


\begin{proof}
The bound $T \leq M_0/4\pi$ is proven above. 
Assume that $\max_{\Sigma_{\e}^t}|A| \le C$ for all $t\in [0,T)$. Then we 
want to show that the surfaces $\Sigma^{\e}_t$ converge, as $t \to T$, to a smooth limiting surface
$\Sigma^\e_T$.  Similarly as in \cite{Di},
using the curvature bounds we have,  for all $0 < t_t < t_2 <T$,  the bounds
$$|F(p,t_1) - F(p,t_2)| \le C\, |t_2 - t_1| \qquad \mbox{and} \qquad  |\frac{\partial}{\partial t}g_{ij}|^2 \le C$$
for a uniform in $t$  constant $C$, which imply that  $F(\cdot,t)$ converges,
as $t\to T$ to some continuous surface $\tilde \Sigma^{T}_\e$. We get uniform $C^2$-bounds on $F$
out of the bound on $|A|$. Since our equation is uniformly parabolic and
 the operator $\kappa$ is concave, by Krylov and Schauder estimates
we obtain all higher derivative bounds.

We have just shown that the surface $\Sigma_T^\e$ is $C^\infty$ smooth. 
Also from our assumption 
$$H(\cdot,T) \geq \delta >0, \qquad \mbox{on} \,\, \Sigma_T^\e.$$
By Proposition \ref{prop-ste-e}  there exists $\tau_\e >0$ for which a smooth  flow can be continued
on $[T,T+\tau_\e)$, which contradicts our assumption that $T$ is maximal. Hence,
$\limsup_{t\to T_{\e}}|A| = \infty$ and the result follows. 

\end{proof}


\subsection{Monotonicity formula}
We will now show the  monotonicity property of the quantity 
$$Q_\e = \langle F_{\epsilon},\nu \rangle + 2 t \, \kappa_\epsilon$$
along the flow (\ref{equation-reg}).  This will play an  essential role in
establishing the long time existence.  Similar
quantity was considered by Smoczyk in \cite{Sm}.

\begin{lem}
\label{lemma-monotone}
Assuming that  $q_\e(0):=\min_{\Sigma_0} \langle F_{\epsilon},\nu\rangle  \geq 0 $, 
the quantity $$q_\e(t):=\min_{\Sigma_t^\e} (\langle F_{\epsilon},\nu\rangle + 2 t \, \kappa_\epsilon) $$
is increasing in time
for as long as the solution $\Sigma_t^\e$ exists. Hence, $$q_\e(t):=\min_{\Sigma_t^\e} (\langle F_{\epsilon},\nu\rangle + 2 t \, \kappa_\epsilon) \geq q_\e(0) \geq 0.$$
\end{lem}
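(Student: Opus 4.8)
The plan is to prove the monotonicity of $q_\e(t)$ by the maximum principle, applied to the evolution of the scalar quantity $Q_\e := \langle F_\e, \nu\rangle + 2t\,\kappa_\e$. First I would compute the evolution equation for $\langle F, \nu\rangle$ under \eqref{equation-reg}. Using $\partial_t F = -\kappa_\e \nu$, the standard formulas $\partial_t \nu = \nabla \kappa_\e$ (gradient taken with respect to the induced metric, interpreted as a tangent vector) and the fact that $\kappa_\e$ is homogeneous of degree one in $A$, one obtains an equation of the schematic form
\begin{equation*}
\frac{\partial}{\partial t}\langle F,\nu\rangle = \mathcal{L}_\e \langle F,\nu\rangle + \left(\frac{\partial \kappa_\e}{\partial h^i_j} h^j_l h^l_i\right)\langle F,\nu\rangle - 2\,\kappa_\e,
\end{equation*}
where the term $-2\kappa_\e$ arises because $\langle F, \nu\rangle$ is not scale-invariant and the operator $\mathcal{L}_\e$ applied to the position function picks up the zeroth-order piece $\langle F, \Delta_{\mathcal{L}} \nu\rangle$ together with $\mathrm{tr}(a_\e)$-type contributions; I would verify the precise constant $-2$ via the analogous computation in \cite{An1} (the support function evolution), noting that it is exactly $-2\kappa_\e$ because $\kappa_\e$ is degree-one homogeneous so $\frac{\partial \kappa_\e}{\partial h^i_j} h^i_j = \kappa_\e$, producing a factor $2$ from the two appearances of the Weingarten map in the second derivative of $F$ along $\nu$.

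Next I would use Lemma \ref{lemma-evolution}(ii), which gives
\begin{equation*}
\frac{\partial}{\partial t}\kappa_\e = \mathcal{L}_\e \kappa_\e + \left(\frac{\partial\kappa_\e}{\partial h^i_j} h_{il}h_{lj}\right)\kappa_\e.
\end{equation*}
Combining these, with $b := \frac{\partial\kappa_\e}{\partial h^i_j}h^j_l h^l_i \ge 0$ (this is $2\kappa^2 + \e|A|^2 \ge 0$ by \eqref{eq-useful00}), I compute
\begin{equation*}
\frac{\partial Q_\e}{\partial t} = \mathcal{L}_\e Q_\e + b\,\langle F,\nu\rangle - 2\kappa_\e + 2\kappa_\e + 2t\,b\,\kappa_\e = \mathcal{L}_\e Q_\e + b\,\langle F,\nu\rangle + 2tb\,\kappa_\e = \mathcal{L}_\e Q_\e + b\,Q_\e,
\end{equation*}
so that $Q_\e$ satisfies a linear parabolic equation with no zeroth-order sign obstruction beyond the factor $b \ge 0$. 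The key cancellation is that the $-2\kappa_\e$ from the position-function evolution is exactly cancelled by the $+2\kappa_\e$ produced when $\partial_t$ hits the explicit factor $2t$ in $Q_\e$.

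I would then apply the parabolic minimum principle. Since $\Sigma_0$ is compact and $q_\e(0) = \min_{\Sigma_0}\langle F_\e,\nu\rangle \ge 0$, the minimum of $Q_\e$ over $\Sigma^\e_t$ is attained; at a spatial minimum point at time $t$ we have $\mathcal{L}_\e Q_\e \ge 0$ (positive-definiteness of $a_\e$, which holds unconditionally by \eqref{eq-coeff1}), hence $\frac{d}{dt}q_\e(t) \ge b\,q_\e(t)$ in the sense of forward difference quotients (or via a standard Hamilton-type argument for the minimum of a quantity). As long as $q_\e(t) \ge 0$, the right-hand side is $\ge 0$, so $q_\e$ is nondecreasing and stays $\ge q_\e(0) \ge 0$; a Gronwall/continuity argument closes the loop to show it cannot become negative. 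The main obstacle I anticipate is getting the evolution equation for $\langle F,\nu\rangle$ with the correct constant $-2\kappa_\e$ and confirming that all the lower-order terms assemble into exactly $b\,\langle F,\nu\rangle$ — this requires care with the homogeneity of $\kappa_\e$ and with the identities for $\partial_t\nu$ and the Simons-type terms, but it is a direct computation following \cite{An1}; once that identity is in hand, the maximum principle argument is routine.
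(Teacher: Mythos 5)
Your proposal is correct and follows essentially the same route as the paper: both establish the scalar evolution equation for $\langle F_\e,\nu\rangle$ (using $\partial_t\nu = \nabla\kappa_\e$ and Gauss--Codazzi to assemble $\mathcal{L}_\e\langle F_\e,\nu\rangle$), combine it with Lemma~\ref{lemma-evolution}(ii) to obtain $\partial_t Q_\e = \mathcal{L}_\e Q_\e + a_\e^{ik}h_{ij}h_{jk}\,Q_\e$ with nonnegative zeroth-order coefficient, and then apply the parabolic minimum principle. The only cosmetic difference is that the paper writes out the two intermediate identities for $\mathcal{L}_\e\langle F_\e,\nu\rangle$ and $\partial_t\langle F_\e,\nu\rangle$ explicitly before subtracting, whereas you present the subtracted form directly and defer the bookkeeping to the computation in~\cite{An1}.
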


\begin{proof} We will compute the evolution of $Q_\e$ and apply the maximum principle.
We begin by computing the evolution of  $\langle F_{\epsilon},\nu\rangle$. We have:
$$\mathcal{L}_{\epsilon}(\langle F_{\epsilon},\nu\rangle) = 
a_{\epsilon}^{ik}\nabla_i(\nabla_k \langle F_{\epsilon},\nu\rangle)
= a_{\epsilon}^{ik}\nabla_i(\langle e_k, \nu\rangle + \langle
F_{\epsilon},h_{kj}e_j\rangle),$$ since
$$\nabla_i\nu = h_{ij}e_j, \qquad \nabla_ie_j = -h_{ij}\nu, \qquad \nabla_i F_{\epsilon} = e_i.$$
Using Gauss-Codazzi equation $\nabla_i h_{kj} = \nabla_j h_{ik}$ and
since $\nabla_i(\langle e_k,\nu\rangle) = 0$ we get
\begin{eqnarray*}
\mathcal{L}_{\epsilon}(\langle F_{\epsilon},\nu\rangle) &=& a_{\epsilon}^{ik}
[\langle \nabla_iF_{\epsilon},h_{kj}e_j\rangle
+ \langle F, h_{kj}\nabla_i e_j \rangle + \langle F, e_j\cdot\nabla_i h_{jk}\rangle] \\
&=& a_{\epsilon}^{ik}[h_{ik} - h_{ij}h_{jk}\langle F_{\epsilon},\nu\rangle + 
\langle F, e_j\cdot \nabla_j h_{ik}\rangle] \\
&=& \kappa_\epsilon - a^{ik}h_{ij}h_{jk}\langle F_{\epsilon},\nu\rangle + 
\langle F_{\epsilon}, \nabla \kappa_\epsilon\rangle. 
\end{eqnarray*}
On the other hand, since $\frac{\partial\nu}{\partial t} = \nabla
\kappa_\epsilon$, it follows 
$$\frac{\partial}{\partial t}\langle F_{\epsilon},\nu\rangle = 
-\kappa_\epsilon + \langle F_{\epsilon}, \nabla \kappa_\epsilon\rangle$$
which yields
$$\frac{\partial}{\partial t}\langle F_{\epsilon},\nu\rangle - \mathcal{L}_\epsilon(\langle F_{\epsilon},\nu\rangle) =
-2\kappa_\epsilon + a^{ik}h_{ij}h_{jk}\langle F_{\epsilon},\nu\rangle.$$
We also have
$$\frac{\partial \kappa_\epsilon}{\partial t} = \mathcal{L}_{\epsilon}(\kappa_\epsilon) 
+ a_{\epsilon}^{ik}h_{ij}h_{jk}\kappa_\epsilon.$$
Hence  $Q_{\epsilon} = \langle F_{\epsilon},\nu\rangle + 2t\,  \kappa_\epsilon$ satisfies
\begin{equation}
\label{equation-Q}
\frac{\partial Q_{\epsilon}}{\partial t} = \mathcal{L}_{\epsilon} Q_\epsilon+ a_{\epsilon}^{ik}\nabla_i\nabla_k Q_{\epsilon} + 
a_{\epsilon}^{ik}h_{ij}h_{jk} Q_{\epsilon}.
\end{equation}
Notice that  the right hand side of (\ref{equation-Q}) is a strictly elliptic
operator and $$a_{\epsilon}^{ik}h_{ij}h_{jk} \geq 0.$$ We conclude by   the maximum principle that 
$$q_\e'(t) = \frac{d}{dt}(\, \langle F_{\epsilon},\nu\rangle + 2t\,  \kappa_\epsilon\rangle \, )_{\min} \ge 0$$
assuming that $q_\e(0) \geq 0$. This implies that $q_\e(t) \geq q_\e(0)$ finishing the proof of
the lemma.  
\end{proof}

Notice that if instead of $Q_{\epsilon}$ we take the quantity 
$$Q_{\eta, \e} =  \langle F_{\epsilon},\nu \rangle + 2(t + \eta) \,  \kappa_\epsilon$$
for any constant $\eta \in \R$, the same computation as above yields to 
that $Q_{\eta,\e}$ satisfies
$$\frac{\partial Q_{\eta,\e}}{\partial t} =\mathcal{L}_{\epsilon} Q_{\eta,\epsilon}  + a
_{\epsilon}^{ik}\nabla_i\nabla_k Q_{\eta,\e} + a_{\epsilon}^{ik}h_{ij}h_{jk}Q_{\eta,\e}.$$
Assume that at time $t=0$, we have 
$$q_{\eta, \e}(0) =  \min_{\Sigma_0} (\langle F,\nu \rangle + 2 \, \eta  \,  \kappa_\e)  \geq 0$$
for some $\eta \in \R$ (notice that  $F_\e=F$ at $t=0$). 
Then,   the maximum principle to the
above equation, gives:

\begin{prop}\label{rem-useful}
For any $\eta \in \R$, such that $q_{\eta, \e} (0):=
 \min_{\Sigma_0} \,  ( \langle F ,\nu \rangle + 2\,  \eta \,  \kappa_\e ) \geq 0 $
  the quantity 
$$q_{\eta, \e}(t) := \min_{\Sigma^\e_t} \,  (\langle F_{\epsilon},\nu \rangle + 2(t + \eta) \,  \kappa_\epsilon\, )$$
is increasing in time. Hence
\begin{equation}
\label{equation-cases}
{q}_{\epsilon,\eta}(t) := \min_{\Sigma^\e_t} \,  (\langle F_{\epsilon},\nu \rangle + 2(t + \eta) \,  \kappa_\epsilon\, ) 
\ge {q}_{\epsilon,\eta}(0) \geq 0.
\end{equation}

\end{prop}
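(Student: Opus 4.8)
The plan is to mimic the proof of Lemma~\ref{lemma-monotone} essentially verbatim, since the only difference between $Q_\e$ and $Q_{\eta,\e}$ is the replacement of the coefficient $2t$ of $\kappa_\e$ by $2(t+\eta)$, which contributes only a constant shift that does not affect the time derivative. First I would record that $\langle F_\e,\nu\rangle$ satisfies, exactly as computed in the proof of Lemma~\ref{lemma-monotone},
$$\frac{\partial}{\partial t}\langle F_\e,\nu\rangle - \mathcal L_\e(\langle F_\e,\nu\rangle) = -2\kappa_\e + a_\e^{ik}h_{ij}h_{jk}\langle F_\e,\nu\rangle,$$
together with the evolution of $\kappa_\e$ from Lemma~\ref{lemma-evolution}(ii),
$$\frac{\partial\kappa_\e}{\partial t} = \mathcal L_\e(\kappa_\e) + a_\e^{ik}h_{ij}h_{jk}\,\kappa_\e.$$
Multiplying the second identity by $2(t+\eta)$ and adding, the term $-2\kappa_\e$ cancels against the $+2\kappa_\e$ produced by differentiating $2(t+\eta)\kappa_\e$ in $t$, and one obtains
$$\frac{\partial Q_{\eta,\e}}{\partial t} = \mathcal L_\e Q_{\eta,\e} + a_\e^{ik}h_{ij}h_{jk}\,Q_{\eta,\e},$$
which is the displayed equation preceding the Proposition.

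Next I would invoke the maximum principle for the minimum. The operator on the right is strictly elliptic (by \eqref{eq-coeff1}, $a_\e^{ik}$ is positive definite on the time interval of existence) and the zeroth-order coefficient $a_\e^{ik}h_{ij}h_{jk}$ is nonnegative. Hence at a spatial minimum point of $Q_{\eta,\e}(\cdot,t)$ one has $\mathcal L_\e Q_{\eta,\e}\ge 0$ and $a_\e^{ik}h_{ij}h_{jk}Q_{\eta,\e}\ge 0$ as soon as $q_{\eta,\e}(t)\ge 0$; thus $q_{\eta,\e}'(t)\ge 0$ in the sense of the lower Dini derivative. Since by hypothesis $q_{\eta,\e}(0) = \min_{\Sigma_0}(\langle F,\nu\rangle + 2\eta\,\kappa_\e)\ge 0$ (and $F_\e = F$ at $t=0$), a continuity/bootstrap argument gives $q_{\eta,\e}(t)\ge q_{\eta,\e}(0)\ge 0$ for all $t$ in the interval of existence, which is exactly \eqref{equation-cases}.

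The only genuinely delicate point is the standard one underlying all such minimum-principle arguments: $q_{\eta,\e}(t)$ is a minimum of a smooth function over a compact manifold, hence Lipschitz in $t$ but not a priori differentiable, so the inequality $q_{\eta,\e}'\ge 0$ must be read as a statement about the liminf of difference quotients (or via Hamilton's trick), and one must ensure the sign condition $q_{\eta,\e}(t)\ge 0$ persists long enough to feed back into the argument. This is handled routinely: let $t_0 = \sup\{t : q_{\eta,\e}(s)\ge 0 \text{ for all } s\le t\}$; on $[0,t_0)$ the zeroth-order term has a sign and the computation above gives monotonicity, so in fact $q_{\eta,\e}(t_0^-)\ge q_{\eta,\e}(0)\ge 0$, and by continuity $t_0$ is not the maximal existence time. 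I do not expect any obstacle beyond this bookkeeping; the algebraic cancellation $-2\kappa_\e + 2\kappa_\e = 0$ is what makes the result work, and it is identical to the one already carried out for $Q_\e$.
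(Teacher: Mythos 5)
Your proposal is correct and follows the same route as the paper: derive the evolution equation for $Q_{\eta,\e}$ by the identical computation used for $Q_\e$ in Lemma~\ref{lemma-monotone} (noting the cancellation of $-2\kappa_\e$ against $\partial_t\bigl(2(t+\eta)\kappa_\e\bigr)$), then apply the maximum principle using positivity of $a_\e^{ik}h_{ij}h_{jk}$. Your handling of the continuity/bootstrap point and of the Dini-derivative reading of $q_{\eta,\e}'\ge 0$ is a harmless elaboration of what the paper leaves implicit, and you also correctly state the evolution equation without the paper's apparent duplicated second-order term.
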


\medskip
Since the initial surface $\Sigma_0$ is star-shaped, we may  choose $\eta > 0$ 
so that we have $q_{\eta,\e}(0)  > 0$. This is possible by continuity, since  $\langle F,\nu\rangle > 0$.
By Proposition  \ref{rem-useful} we have 
\begin{equation*}
{Q}_{\eta,\e}(\cdot,t) \ge q_{\eta,\e}  (t) >0
\end{equation*}
which  implies the lower bound 
\begin{equation}
\label{equation-better1}
\kappa_\e = \frac{G}{H} + \epsilon H \ge - \frac{\langle F_{\epsilon},\nu\rangle}{2 (t+\eta)}.
\end{equation}
We will show  next that \eqref{equation-better1} implies  a uniform lower bound on $\kappa_{\e}$, independently of $\e$. To this end,
we need to bound $\langle F_{\e},\nu\rangle$, independently of $\e$. This bound follows from
the  comparison principle for curvature  flows  with the property that the speed is 
an increasing function of the principal curvatures. More precisely, we have the following lemma.

\begin{lem}
\label{lem-bound-F}
There exists  a constant $C$,   independent of $\epsilon$,  so that 
\begin{equation}
\label{equation-lower-speed}
\kappa_\epsilon:=\frac{G}{H} + \epsilon\,  H \ge -C, \qquad \forall t \in [0,T_\epsilon).
\end{equation}
\end{lem}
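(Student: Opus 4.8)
The plan is to combine the monotonicity inequality \eqref{equation-better1} with a uniform upper bound on the support function $\langle F_\epsilon, \nu\rangle$. The key observation is that \eqref{equation-better1} already gives $\kappa_\epsilon \ge -\langle F_\epsilon,\nu\rangle/(2(t+\eta)) \ge -\langle F_\epsilon,\nu\rangle/(2\eta)$, so it suffices to show that $\langle F_\epsilon,\nu\rangle \le C$ for a constant $C$ independent of $\epsilon$, at least on a uniform time interval, and then note that the full interval $[0,T_\epsilon)$ is controlled via \eqref{eqn-Te}. Since the surfaces are star-shaped with respect to the origin, $\langle F_\epsilon,\nu\rangle$ is comparable to $|F_\epsilon|$, so bounding the support function amounts to bounding the extrinsic diameter of $\Sigma_t^\epsilon$.

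The main step is an avoidance/comparison argument. The speed $\kappa_\epsilon = G/H + \epsilon H$ is, by \eqref{eq-coeff1}, an \emph{increasing} function of the principal curvatures (equivalently, $a_\epsilon^{ik}$ is positive definite), so the flow \eqref{equation-reg} enjoys a comparison principle with respect to any other surface moving by the same law, and in particular with respect to large spheres. First I would enclose $\Sigma_0$ in a sphere $S_{R_0}$ of radius $R_0$ centered at the origin, with $R_0$ depending only on the initial surface. A sphere of radius $r$ has both principal curvatures equal to $1/r$, hence moves under \eqref{equation-reg} with speed $\kappa_\epsilon = \tfrac{1}{2r} + \tfrac{2\epsilon}{r}$; solving the resulting ODE $\dot r = -(\tfrac{1}{2r} + \tfrac{2\epsilon}{r})$ shows the sphere stays embedded, with radius bounded above by $R_0$, for as long as it exists, and its extinction time is bounded below by a constant independent of $\epsilon$ (indeed the $\epsilon$ term only speeds up the shrinking, so $r(t)^2 = R_0^2 - (1+4\epsilon)t \ge R_0^2 - t$ away from $\epsilon$). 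By the comparison principle, $\Sigma_t^\epsilon$ stays inside this shrinking sphere, so $|F_\epsilon| \le R_0$ and hence $\langle F_\epsilon,\nu\rangle \le R_0$ on $[0,T_\epsilon)$, using that $T_\epsilon \le \mu_0(\Sigma_0)/4\pi$ from \eqref{eqn-Te} is finite and can be arranged (by taking $R_0$ slightly larger) to be less than the sphere's extinction time.

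Combining this with \eqref{equation-better1} gives
\begin{equation*}
\kappa_\epsilon \ge -\frac{\langle F_\epsilon,\nu\rangle}{2(t+\eta)} \ge -\frac{R_0}{2\eta} =: -C,
\end{equation*}
with $C$ depending only on $\Sigma_0$ and the fixed choice of $\eta$, hence independent of $\epsilon$.

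The step I expect to require the most care is making the comparison principle rigorous in this setting: one must check that \eqref{equation-reg} admits a genuine avoidance principle between a smooth strictly convex barrier (the sphere, where $G>0$) and the possibly nonconvex evolving surface $\Sigma_t^\epsilon$. This is where the monotonicity of $\kappa_\epsilon$ in the principal curvatures — i.e.\ the positive-definiteness of $a_\epsilon^{ik}$ recorded in \eqref{eq-coeff1} — is essential; the standard argument is to look at the first time the two surfaces touch and derive a contradiction from comparing speeds at the contact point, where the inner surface has smaller (or equal) principal curvatures in every direction. One subtlety is that the inner surface need not be convex, but at a first interior contact point its principal curvatures are nonetheless dominated by those of the enclosing sphere, which is all that the monotonicity of the speed requires. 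An alternative, which avoids the full strength of the geometric avoidance principle, is to run the comparison at the level of the scalar support/radial function in star-shaped coordinates, but the barrier argument with spheres is cleaner and suffices here.
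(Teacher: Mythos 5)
Your overall strategy is the same as the paper's: combine the monotonicity bound \eqref{equation-better1} with a uniform upper bound $\langle F_\epsilon,\nu\rangle \le C$ obtained by enclosing $\Sigma_0$ in a sphere and running the standard avoidance argument for the $\epsilon$-flow (using that $\kappa_\epsilon$ is monotone increasing in the Weingarten map, which is positive-definiteness of $a^{ik}_\epsilon$). The explicit sphere ODE you write is fine and replaces the paper's appeal to Andrews' theorem, and the remark that $\langle F_\epsilon,\nu\rangle$ is ``comparable'' to $|F_\epsilon|$ is unnecessary — you only need the trivial Cauchy--Schwarz inequality $\langle F_\epsilon,\nu\rangle \le |F_\epsilon|$, which requires no star-shapedness.

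However, your description of the comparison step has the key inequality backwards, and as written it does not close the argument. You assert twice that at the touching (or distance-minimizing) point the inner surface ``has smaller (or equal) principal curvatures in every direction'' and that its curvatures are ``dominated by those of the enclosing sphere.'' This is the wrong direction: if $\Sigma$ lies inside the sphere and the two surfaces touch with a common outer normal, then writing both as graphs over the common tangent plane shows the inner surface lies \emph{above} the sphere's graph in the outer-normal direction, so its second derivative (and hence its Weingarten map, in the paper's convention) is \emph{larger}. Concretely, a small sphere tangent internally to a large one has larger curvature. It is precisely $W(p_0) \ge W(q_0)$ — with $p_0$ on the inner, possibly nonconvex $\Sigma_t^\epsilon$ and $q_0$ on the sphere — that the paper uses: then $\kappa_\epsilon(W(p_0)) \ge \kappa_\epsilon(W(q_0))$, so the inner surface moves inward at least as fast, and $d_{\min}(t)$ is non-decreasing. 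With the inequality you state, the outer sphere would move inward at least as fast and nothing prevents it from catching the inner surface; there is no contradiction, and the avoidance principle would not follow. The fix is just to reverse the direction of the curvature comparison at the contact point; once corrected, your argument matches the paper's.
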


\begin{proof}
We claim that there is a uniform constant $C$ so that $|F_\e| \le
C$, for all $t \in [0,T_\e)$.  To see that, let $\psi_0: S^2 \to
\mathbb{R}^3$ denote the parametrization of a sphere that encloses
the initial hypersurface $\Sigma_0$.  By the result of B.  Andrews
in \cite{An1}, the solution $\psi_\e(\cdot,t)$ of
(\ref{equation-reg}) with initial condition $\psi_0$ shrinks to a
point in some finite time $\tilde{T}_\e$. Moreover,
\begin{equation}\label{eqn-tildeT}
\tilde T_\e \leq \tilde T < \infty
\end{equation}
for a uniform constant $\tilde T$. 

The standard comparison principle shows that the images of $F_\e$ and
$\psi_\e$ stay disjoint for all the time of their existence. To see
this, we consider the evolution of $$d(p,q,t) := |F_\e(p,t) -
\psi_\e(q,t)|, \qquad (p,q) \in \Sigma_t^\e \times S^2.$$ Assume that
the minimum of $d$ at time $t$ occurs at $(p_0,q_0)$. If $W$ denotes
the Weingarten map, at that minimum point $W(p_0) \ge W(q_0)$, so by
the monotonicity of our speed $\kappa_\e$, $\kappa_\e(W(p_0)) \ge
\kappa_\e(W(q_0))$. The maximum principle tells us that $d_{min}(t)$
is non-decreasing and therefore the distance between the images of
$F_{\e}$ and $\psi_\e$ is non-decreasing. Hence, they stay disjoint in
time.  As a consequence of that, our hypersurfaces $F_\e(\cdot,t)$
stay enclosed by the sphere $\psi_0$ for all times of their existence
(since $\psi_\e(\cdot, t)$ are enclosed by $\psi_0$) and therefore
$|F_\e|(\cdot,t) \le C$ for a uniform constant $C$.

The above bound implies that $\langle F_\e, \nu \rangle \leq C$, for a
uniform in $\e$ constant $C$ and all $t \in [0,T_\e)$.  This together
with (\ref{equation-better1}) yield to (\ref{equation-lower-speed}).
\end{proof}

\subsection{Curvature pinching estimates for the $\e$-flow}
\medskip
Define  $$\mathcal{F_\e} := \langle F_\e,\nu\rangle + 2t\, \kappa_\e.$$
Notice that division by $\mathcal{F_\e}$ makes sense since by Lemma
\ref{lemma-monotone}, $(\mathcal{F_\e})_{\min}$ is increasing in time and
$(\mathcal{F_\e})_{\min}(0) \ge \delta > 0$ due to star-shapedness. 
As we showed above, $\sup_{\Sigma_t}|F_\e| \le C$
for a uniform constant $C$. Rewrite the evolution equation for $H$ 
from Lemma \ref{lemma-evolution} in the form
$$\frac{\partial}{\partial t}H = \mathcal{L}_\e(H) + \ddot{\kappa}_\e(\nabla W, \nabla W)
+ \dot{\kappa}_\e(W^2)H$$
where $W$ is the Weingarten map and 
$$\ddot{\kappa}_\e(\nabla W, \nabla W) =
\frac{\partial^2 \kappa_\e}{\partial h_q^p\partial h_m^l}
\nabla^i h_q^p\nabla_j h_m^l \quad \mbox{and} \quad \dot{\kappa}_\e(W^2) =
\frac{\partial\kappa_\e}{\partial h_m^l}h_p^lh_m^p.$$
Then, by  direct computation  we have  
\begin{equation}
\label{eq-quotient}
\frac{\partial}{\partial t} \left ( \frac{H}{\mathcal{F}_\e} \right ) =
\mathcal{L}_\e\left (\frac{H}{\mathcal{F}_\e} \right ) + \frac{2}{\mathcal{F}_\e}\, \dot{\kappa}_\e
\left (\nabla\mathcal{F}, \nabla\frac{H}{\mathcal{F}_\e} \right  )+
\frac{1}{2\mathcal{F}_\e}\, \tr_g\ddot{\kappa_\e}(\nabla W, \nabla W).
\end{equation} 
By (\ref{eq-definite-H}), the last term in this equation is negative.
Hence, by the maximum principle, the supremum of
${H}/{\mathcal{F_\e}}$ is decreasing. In particular, we have: 

\begin{lem} Assume that $\Sigma_t^\e$  is a solution of  \eqref{equation-reg}  on
$[0,T_\e)$ with $\Sigma_0$ as in Theorem \ref{thm-STE}. Then, 
\begin{equation}
\label{eq-better-H}
\sup_{\Sigma_t^\e\times[0,\tau)}\frac{H}{\langle F_\e, \nu\rangle + 2t\, \kappa_\e} \le C,
\qquad \mbox{on} \,\,\, [0,T_\e)
\end{equation}
for a uniform constant $C$ that depends only on $\Sigma_0$. 
\end{lem}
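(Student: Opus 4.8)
The plan is to run the maximum principle on the quotient $u_\e := H/\mathcal{F}_\e$ using the evolution equation \eqref{eq-quotient}, which has already been recorded above; essentially all the analytic content is in place. First I would check that $u_\e$ is a well-defined smooth function on $\Sigma_t^\e\times[0,T_\e)$. Since $\Sigma_0$ is star-shaped with respect to the origin, $\langle F_0,\nu\rangle>0$ on $\Sigma_0$, so $q_\e(0):=\min_{\Sigma_0}\langle F_0,\nu\rangle\ge\delta>0$, and Lemma \ref{lemma-monotone} then gives $\mathcal{F}_\e=\langle F_\e,\nu\rangle+2t\,\kappa_\e\ge q_\e(0)\ge\delta$ for all $t\in[0,T_\e)$. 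Hence division by $\mathcal{F}_\e$ is legitimate and $u_\e$ is bounded on each time slice.

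Next I would invoke \eqref{eq-quotient}, which expresses $\partial_t u_\e$ as $\mathcal{L}_\e u_\e$, plus the transport term $\tfrac{2}{\mathcal{F}_\e}\dot{\kappa}_\e(\nabla\mathcal{F},\nabla u_\e)$, plus the term $\tfrac{1}{2\mathcal{F}_\e}\tr_g\ddot{\kappa}_\e(\nabla W,\nabla W)$. Because $H>0$ along the $\e$-flow, the sign computation \eqref{eq-definite-H} (equivalently, the fact that $D^2\kappa_\e$ in \eqref{eq-concavity} is negative semidefinite and the off-diagonal coefficients equal $-1/H<0$) shows that $\tr_g\ddot{\kappa}_\e(\nabla W,\nabla W)\le 0$, so this last term is nonpositive. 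Thus $u_\e$ is a subsolution of the linear, strictly parabolic equation obtained by discarding that term, and this equation carries no zeroth-order term.

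Finally I would apply the maximum principle in the standard form (Hamilton's trick): at a point realizing $\max_{\Sigma_t^\e}u_\e(\cdot,t)$ one has $\nabla u_\e=0$, which kills the transport term, and $\mathcal{L}_\e u_\e\le 0$ by the positive definiteness of $a_\e^{ik}$ recorded in \eqref{eq-coeff1}; together with the nonpositivity of the last term this yields $\tfrac{d}{dt}\max_{\Sigma_t^\e}u_\e\le 0$. Integrating from $0$ and using $\mathcal{F}_\e(\cdot,0)=\langle F_0,\nu\rangle$ gives $\max_{\Sigma_t^\e}u_\e(\cdot,t)\le\max_{\Sigma_0}\bigl(H/\langle F_0,\nu\rangle\bigr)=:C$ for all $t\in[0,T_\e)$, a quantity that is finite (as $\Sigma_0$ is compact with $H>0$ and $\langle F_0,\nu\rangle>0$) and depends only on $\Sigma_0$, in particular not on $\e$. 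This is exactly \eqref{eq-better-H}.

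There is no serious obstacle here: the two points needing a little care are verifying that the positive lower bound on $\mathcal{F}_\e$ is genuinely $\e$-independent (it is, since it comes from the initial data via Lemma \ref{lemma-monotone}) and making the differentiation of $\max_{\Sigma_t^\e}u_\e$ rigorous, which is routine given the smoothness of the $\e$-flow. If one preferred not to rely on the precomputed identity \eqref{eq-quotient}, the same conclusion would follow by deriving it from the evolution of $H$ in Lemma \ref{lemma-evolution} and the evolution of $\mathcal{F}_\e=Q_\e$ in \eqref{equation-Q} via the quotient rule, the $\dot{\kappa}_\e(W^2)$-terms cancelling against each other.
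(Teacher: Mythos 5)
Your proof is correct and follows exactly the paper's argument: the paper also invokes the precomputed evolution equation \eqref{eq-quotient}, notes via \eqref{eq-definite-H} that the gradient term is nonpositive, and concludes by the maximum principle that $\sup H/\mathcal{F}_\e$ is nonincreasing, having already noted that $\mathcal{F}_\e$ has a positive $\e$-independent lower bound from Lemma \ref{lemma-monotone} and star-shapedness. You merely spell out the details (Hamilton's trick, the role of the positive definiteness of $a^{ik}_\e$, and the evaluation at $t=0$) that the paper leaves implicit.
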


Denote by $\lambda_1, \lambda_2$ the two principal curvatures of the surface $\Sigma_t^\e$ at some time $t$ and point $P$.

\begin{lem}\label{lem-111}
If there is some time $t_0$ so that $\liminf_{t\to t_0} H(\cdot,t) = 0$, then 
\begin{equation}
\label{eq-eigen-zero}
\liminf_{t\to t_0} \, (\lambda_1^2 + \lambda_2^2)  = 0.
\end{equation}
\end{lem}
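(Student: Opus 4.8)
The plan is to argue by contradiction using the evolution inequality \eqref{eq-quotient} together with the sign information about the principal curvatures. Suppose \eqref{eq-eigen-zero} fails, so that along a sequence of times $t_k \to t_0$ and points $P_k \in \Sigma_{t_k}^\e$ we have $H(P_k,t_k) \to 0$ while $|A|^2(P_k,t_k) = \lambda_1^2 + \lambda_2^2 \geq c > 0$. Since $H = \lambda_1 + \lambda_2 \to 0$ along this sequence and $\lambda_1 \geq \lambda_2$, the pinched pair must satisfy $\lambda_1 \to a > 0$ and $\lambda_2 \to -a < 0$ (up to passing to a further subsequence), i.e. the surface is developing a point where it is "balanced" between a strictly convex and a strictly concave direction while $H$ degenerates. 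The key observation is that at such a point the Gauss curvature $G = \lambda_1 \lambda_2 \to -a^2 < 0$, so $\kappa = G/H \to -\infty$, and hence $\kappa_\e = \kappa + \e H \to -\infty$ as well.

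First I would record that $\mathcal{F}_\e = \langle F_\e,\nu\rangle + 2t\,\kappa_\e$ stays bounded above: indeed $|F_\e| \le C$ uniformly (shown in the proof of Lemma \ref{lem-bound-F}), so $\langle F_\e,\nu\rangle \le C$, and $t \le T_\e \le \mu_0(\Sigma_0)/4\pi$ is bounded, so $\mathcal{F}_\e \le C + C\,|\kappa_\e|$. Combined with $\kappa_\e \to -\infty$ along our sequence, this forces $\mathcal{F}_\e \to -\infty$ along $(P_k,t_k)$ — but this contradicts Lemma \ref{lemma-monotone}, which guarantees $\min_{\Sigma_t^\e}\mathcal{F}_\e = q_\e(t) \geq q_\e(0) \geq \delta > 0$ by star-shapedness. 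This is the cleanest route and I expect it to close the argument directly: the monotonicity quantity cannot stay positive if $\kappa_\e$ blows down to $-\infty$ while $\langle F_\e,\nu\rangle$ is bounded.

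Alternatively, if one wants to avoid the blow-down of $\kappa_\e$ and work instead with \eqref{eq-better-H}, I would note that $H/\mathcal{F}_\e \le C$ with $\mathcal{F}_\e$ bounded above gives a uniform upper bound $H \le C$ on all of $\Sigma_t^\e \times [0,T_\e)$; this alone is not enough, so the argument via $\kappa_\e \to -\infty$ and positivity of $\mathcal{F}_\e$ is the one to push through. The only subtlety — and the step I expect to need a little care — is confirming that along the degenerating sequence one genuinely has $G$ bounded away from zero (equivalently $\lambda_2$ bounded away from zero), which is exactly the hypothesis $\liminf(\lambda_1^2+\lambda_2^2) > 0$ negated together with $H \to 0$; once $H \to 0$, $\lambda_1 = -\lambda_2 + o(1)$, so $\lambda_1^2 + \lambda_2^2 = 2\lambda_2^2 + o(1)$ and the assumption $\lambda_1^2 + \lambda_2^2 \ge c$ pins $|\lambda_2| \ge \sqrt{c/2} + o(1)$, hence $G = \lambda_1\lambda_2 \to -\lambda_2^2 \le -c/2$. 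Thus $\kappa = G/H \to -\infty$ since $H \to 0^+$, and the contradiction with $q_\e(t) \geq \delta$ follows.
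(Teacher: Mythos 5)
Your argument is in essence the paper's, run contrapositively: both hinge on the uniform lower bound $\kappa_\e \ge -C$ of Lemma \ref{lem-bound-F}, which follows from star-shapedness. The paper writes $\lambda_1|\lambda_2| = |G| \le CH + \e H^2 \to 0$ in the case $\lambda_2 < 0$ (the case $\lambda_2 \ge 0$ being trivial since then $0 \le \lambda_1,\lambda_2 \le H$) and concludes directly; you instead assume $|A|^2$ stays bounded below, deduce that $G$ stays negative and bounded away from zero so that $\kappa_\e \to -\infty$, and contradict the same bound. Same mechanism, opposite direction.

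One small caution on the exact route you favor: you drive the contradiction through $\mathcal{F}_\e = \langle F_\e,\nu\rangle + 2t\,\kappa_\e$, but the prefactor $2t$ vanishes at $t=0$, so if the degenerating times satisfy $t_k\to t_0=0$ then $\kappa_\e \to -\infty$ alone does not force $2t_k\kappa_\e \to -\infty$, and the monotone quantity $\mathcal{F}_\e$ need not blow down. The cleaner and entirely sufficient step is to invoke Lemma \ref{lem-bound-F} for its actual conclusion, $\kappa_\e \ge -C$ uniformly on $[0,T_\e)$: this is obtained via the $\eta$-shifted quantity $Q_{\eta,\e}$ of Proposition \ref{rem-useful}, where $t+\eta \ge \eta > 0$, and it contradicts $\kappa_\e \to -\infty$ immediately, with no detour through $\mathcal{F}_\e$. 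With that substitution your proof closes and coincides with the paper's.
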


\begin{proof}
Assume $\liminf_{t\to t_0} H(\cdot,t) = 0$. We distinguish the following two cases:
\begin{enumerate}
\item[(i)]
$\lambda_1 > 0$ and $\lambda_2 \ge 0$. In this case (\ref{eq-eigen-zero})
immediately follows.
\item[(ii)]
$\lambda_1 > 0$ and $\lambda_2 < 0$. By Lemma \ref{lemma-monotone},
$$\kappa_\e:=\frac{G}{H} + \e\, H \ge -C$$ uniformly in time, which implies
$$\lambda_1\, |\lambda_2| \le C\,  H + \e \, H^2.$$
Since $\liminf_{t\to t_0} H = 0$, at least for one of the two principal curvatures  
must tend to zero, i.e. 
\begin{equation}
\label{eq-both-zero}
\liminf_{t\to t_0} |\lambda_i| =0.
\end{equation}
Since $\liminf_{t\to t_0} H = 0$, \eqref{eq-eigen-zero} readily follows. 
\end{enumerate}
\end{proof}

\begin{lem}   
\label{lem-bigger-eigen}
There exist uniform (in time $t$ and $\e$) constants $C >0$ and $ \e_0>0$, such that 
for every $0 < \e \leq \e_0$,  if $\lambda_2 \leq  0$ at $P$, then 
$$\lambda_1 \leq C.$$
\end{lem}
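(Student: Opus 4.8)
The plan is to bound the larger principal curvature $\lambda_1$ at a point where $\lambda_2 \le 0$ by exploiting the upper bound on $H/\mathcal F_\e$ together with the lower bound on $\kappa_\e$. Since $\mathcal F_\e = \langle F_\e,\nu\rangle + 2t\,\kappa_\e$ and $\sup_{\Sigma_t}|F_\e|\le C$ by Lemma~\ref{lem-bound-F}, at any point we have $\mathcal F_\e \le C + 2t\,\kappa_\e \le C + 2T_\e\,\kappa_\e$, and by \eqref{eqn-Te} the time $T_\e$ is bounded independently of $\e$. Thus $\mathcal F_\e \le C(1 + \kappa_\e^+)$ where $\kappa_\e^+ = \max(\kappa_\e,0)$. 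Combined with \eqref{eq-better-H}, which gives $H \le C\,\mathcal F_\e$, this yields
$$H \le C\,(1 + \kappa_\e^+) \qquad \text{everywhere on } \Sigma_t^\e.$$

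Now I would work at the point $P$ where, by hypothesis, $\lambda_2 \le 0 \le \lambda_1$ and $H = \lambda_1 + \lambda_2 > 0$. Here $G = \lambda_1\lambda_2 \le 0$, so $\kappa = G/H \le 0$ and $\kappa_\e = \kappa + \e H \le \e H$. Therefore $\kappa_\e^+ \le \e H$ at $P$, and feeding this back into the inequality above gives $H \le C(1 + \e H)$ at $P$, hence $(1 - C\e)\,H \le C$. Choosing $\e_0$ so that $C\e_0 \le 1/2$, we get $H \le 2C$ at $P$ for all $\e \le \e_0$. Since $\lambda_2 \le 0$ forces $0 < \lambda_1 \le \lambda_1 + \lambda_2 + |\lambda_2| $— more directly, $\lambda_1 = H - \lambda_2 \ge H$, but we want an upper bound: from the lower bound on $\kappa_\e$ (Lemma~\ref{lem-bound-F}), $\lambda_1|\lambda_2| \le C H + \e H^2 \le C$, while $\lambda_1 \le H \cdot \frac{\lambda_1}{\lambda_1+\lambda_2}$. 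Actually $\lambda_1 + \lambda_2 = H$ with $\lambda_2 \le 0$ gives $\lambda_1 \ge H/1$; to bound $\lambda_1$ from above I observe $\lambda_1 = H - \lambda_2 = H + |\lambda_2|$, so I still need $|\lambda_2|$ bounded. From $\lambda_1 |\lambda_2| \le CH + \e H^2 \le C$ and $\lambda_1 \ge H \ge $ (a priori $H$ could be small), this alone doesn't close; instead use $|\lambda_2| \le \lambda_1$ is false here. The clean route: since $\lambda_2 \le 0$ we have $\lambda_1 \ge H$, but also $\lambda_1^2 \le \lambda_1^2 + \lambda_2^2 = |A|^2$, and Lemma~\ref{lem-111} together with the pinching quantity will not be needed — rather, observe directly that $-\lambda_2 \le \lambda_1$ need not hold, so instead bound $\lambda_1$ via $\lambda_1 = H - \lambda_2$ and bound $-\lambda_2$: from $\lambda_1(-\lambda_2) \le C$ and $\lambda_1 \ge H > 0$...

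The honest main step — and the one I expect to require care — is converting the bound on $H$ at $P$ into a bound on $\lambda_1$ at $P$ when $\lambda_2$ may be very negative. I would handle this by noting that $\lambda_1 = H - \lambda_2$, so it suffices to bound $|\lambda_2|$. If $|\lambda_2| \le 1$ we are done since then $\lambda_1 \le H + 1 \le 2C+1$. If $|\lambda_2| > 1$, then from the uniform lower bound $\kappa_\e \ge -C$ (Lemma~\ref{lem-bound-F}) we have $-G = \lambda_1|\lambda_2| \le (C + \e H)H \le C'$, so $\lambda_1 \le C'/|\lambda_2| < C'$, giving the desired bound directly. In either case $\lambda_1 \le C$ for a uniform constant, completing the proof. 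The only subtlety to verify carefully is that all constants $C$, $T_\e$, and the threshold $\e_0$ are genuinely independent of $\e$, which follows from \eqref{eqn-Te}, Lemma~\ref{lem-bound-F}, Lemma~\ref{eq-better-H}, and the $\e$-uniform bound on $\sup|F_\e|$ established in Lemma~\ref{lem-bound-F}.
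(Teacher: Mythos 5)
Your proof is correct and follows essentially the same route as the paper: use \eqref{eq-better-H} together with the uniform bounds on $\langle F_\e,\nu\rangle$ and $T_\e$ to deduce $H\le C+C\e H$ at $P$ (where $\kappa\le 0$), absorb the $\e H$ term for small $\e$, and then use the lower bound $\kappa_\e\ge -C$ from Lemma~\ref{lem-bound-F} to pass from the bound on $H$ to a bound on $\lambda_1$. Your explicit case split on $|\lambda_2|\lessgtr 1$ cleanly fills in the step that the paper leaves implicit with the phrase ``with the aid of the previous lemma.''
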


\begin{proof} Since $\lambda_2 \leq 0$, we have $G/H \leq 0$. Hence, from \eqref{eq-better-H}  and the bound  $|\langle F_\e, \nu\rangle| \leq C_0$,
for a uniform in time constant $C_0$, we conclude that
$$H  \leq C +  C\,\e  H$$
for a constant $C$ that depends only on the initial data. 
We conclude that for $\e \leq \e_0$, with $\e_0$ sufficiently small depending only on
the initial data $\Sigma_0$, we have 
$$H:= \lambda_1 + \lambda_2 \leq C$$
from which the desired bound on $\lambda_1$ follows with  the aid of the previous
lemma. 
\end{proof}

\begin{lem} 
\label{lem-smaller-eigen}
There exist uniform (in $t$ and $\e$) constants $C >0$ and $ \e_0>0$, such that 
for every $0 < \e \leq \e_0$ we have   
$$\lambda_2 \geq -C.$$
\end{lem}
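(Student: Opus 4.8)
The plan is to obtain a lower bound on the smaller principal curvature $\lambda_2$ by combining the pinching information already available with the constraint coming from star-shapedness. By Lemma \ref{lem-bigger-eigen}, whenever $\lambda_2 \le 0$ at a point we already control $\lambda_1$ from above by a uniform constant $C$. So the only danger is that $\lambda_2 \to -\infty$. First I would dispose of the region where $\lambda_2 > 0$: there $\lambda_2 \geq 0 \geq -C$ trivially, so we only need to argue on the set $\{\lambda_2 \le 0\}$, which by Lemma \ref{lem-bigger-eigen} is contained in $\{|A| \le C, \ H \le C\}$ for $\e \le \e_0$. On that set $G = \lambda_1 \lambda_2 \le 0$, and using the lower bound on $\kappa_\e$ from Lemma \ref{lem-bound-F}, namely $\kappa_\e = G/H + \e H \ge -C$, together with $H > 0$ and $H \le C$, we get $G \ge -CH - \e H^2 \ge -C$. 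Thus $\lambda_1 |\lambda_2| = -G \le C$, and since we want to rule out $\lambda_2 \to -\infty$ we need a lower bound on $\lambda_1$ that stays bounded away from $0$ on this set — this is the crux.

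The key step, then, is to show that on the set where $\lambda_2 \le 0$ the larger curvature $\lambda_1$ is bounded below by a uniform positive constant. I would derive this from the lower bound \eqref{eq-better-H} on $H/\mathcal{F}_\e$ read the other way, combined with the monotonicity of $\mathcal{F}_\e$ from Lemma \ref{lemma-monotone}, which gives $\mathcal{F}_\e \ge q_\e(0) \ge \delta > 0$; and from the lower speed bound. Concretely, $\kappa_\e \ge -C$ forces $G/H \ge -C - \e H \ge -C$ (using $H \le C$ on our set), so $\lambda_2 = G/\lambda_1 \cdot (\lambda_1/H) \cdot \ldots$; more directly, $\lambda_1 \lambda_2 = G \ge -CH$, i.e. $|\lambda_2| \le CH/\lambda_1$. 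So it suffices to bound $H/\lambda_1$, equivalently to show $\lambda_1 \ge c H$ for a uniform $c > 0$. But on $\{\lambda_2 \le 0\}$ we have $\lambda_1 = H - \lambda_2 \ge H$, so in fact $\lambda_1 \ge H$ automatically, hence $|\lambda_2| \le C H / \lambda_1 \le C$, and so $\lambda_2 \ge -C$. Combining with $\lambda_2 \ge 0$ off this set gives the uniform bound everywhere.

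The main obstacle I anticipate is making sure every constant in this chain is genuinely independent of $\e$: the bound $H \le C$ on $\{\lambda_2 \le 0\}$ already requires $\e \le \e_0$ with $\e_0$ depending only on $\Sigma_0$ (from Lemma \ref{lem-bigger-eigen}), and the lower speed bound $\kappa_\e \ge -C$ from Lemma \ref{lem-bound-F} is uniform, as is $\mathcal{F}_\e \ge \delta$ from star-shapedness via Lemma \ref{lemma-monotone}. One should also take a little care that this is a pointwise estimate valid at each $(P,t)$ with $t < T_\e$ — it does not require a maximum principle argument, just the algebraic relations $H = \lambda_1 + \lambda_2$, $G = \lambda_1\lambda_2$, and the already-established bounds — so the proof is short once the case division $\lambda_2 > 0$ versus $\lambda_2 \le 0$ is set up. If for some reason $\lambda_1 \ge H$ is not quite enough (e.g. if one wants the bound to degenerate only like $H^{-1}$ rather than stay uniform), one falls back on \eqref{eq-eigen-zero} from Lemma \ref{lem-111} to handle the degenerate regime $H \to 0$ separately, but I expect the direct argument above to close the estimate cleanly.
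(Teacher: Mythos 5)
Your proof is correct and takes essentially the paper's route: on the set $\{\lambda_2 < 0\}$ you combine $\kappa_\e \ge -C$ from Lemma~\ref{lem-bound-F} with $H \le \lambda_1 \le C$ from Lemma~\ref{lem-bigger-eigen} to obtain $\lambda_1|\lambda_2| \le C(1+\e)H$, and then divide by $\lambda_1 \ge H$ to get $|\lambda_2| \le C(1+\e) \le \tilde C$ for $\e \le \e_0$. The opening detour through $\mathcal{F}_\e \ge \delta$ and a putative positive lower bound on $\lambda_1$ is unnecessary; the elementary observation $\lambda_1 = H - \lambda_2 \ge H$ on $\{\lambda_2 \le 0\}$, which you eventually reach, is exactly the step the paper uses.
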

\begin{proof} Assume that $\lambda_2 < 0$ (otherwise the bound is obvious).
Then, $\lambda_1 >0$ (since $H=\lambda_1+\lambda_2 >0$)  and  by Lemma \ref{lem-bound-F}, we have
$$\kappa_\e:= \frac {G}{H} + \e \, H  \geq - C$$
for a uniform in time constant $C$. Also, by the previous lemma $H \leq \lambda_1 \leq C$.  Hence,
$$|\lambda_2| \leq C\,(1+\e)\,   \frac{\lambda_1+\lambda_2}{\lambda_1} \leq \tilde C.$$
\end{proof}

\begin{remark}
Lemma  \ref{lem-bigger-eigen} implies  that if  the flow terminates because of the blowing up of the second fundamental
form, that could only happen in the   convex region of $\Sigma_t^\e$  where $\lambda_1 \geq 0$, $\lambda_2 \geq 0$. 

\end{remark}

\begin{lem}   
\label{lem-lambda-pinch}
There exist  uniform (in time $t$ and $\e$) constants $C >0$, $C_0 >0$ and $\e_0$, such that  for every   $\e \geq \e_0$ if  $\lambda_1 \geq C_0$ at $P$, then $\lambda_2 >0$ at $P$ and 
$$1 \leq \frac{\lambda_1}{\lambda_2}  \leq C.$$
\end{lem}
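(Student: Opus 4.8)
\textbf{Proof proposal for Lemma \ref{lem-lambda-pinch}.}

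The plan is to prove a pinching estimate $\lambda_1/\lambda_2 \le C$ in the region where $\lambda_1$ is large, by running a maximum-principle argument on the pinching quantity, exactly in the spirit of Huisken's pinching estimates for mean curvature flow and of Andrews' work in \cite{An1}. First I would observe that the conclusion ``$\lambda_2 > 0$ when $\lambda_1 \ge C_0$'' follows immediately from Lemma \ref{lem-bigger-eigen}: that lemma asserts $\lambda_1 \le C$ whenever $\lambda_2 \le 0$, so taking $C_0$ strictly larger than that constant forces $\lambda_2 > 0$ at any point where $\lambda_1 \ge C_0$. Thus on the relevant region the surface is strictly convex, $G > 0$, $\kappa = G/H > 0$, and the flow is genuinely the (regularized) HMCF on a convex piece, where all the structure from Remark \ref{rem-andrews} applies with $f$ concave.

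Next I would set up the pinching function. A convenient choice is $g = |A|^2/H^2$ (equivalently, a monotone function of $\lambda_1/\lambda_2$ when both are positive), or the more refined $(|A|^2 - H^2/2)/H^2$; either way, controlling its supremum over the convex region controls $\lambda_1/\lambda_2$ from above. I would compute the evolution equation for this quotient using Lemma \ref{lemma-evolution} together with the evolution of $H^2$ from \eqref{eqn-H2} and the structure of $\ddot\kappa_\e$ in \eqref{eq-ddot}, \eqref{eq-concavity}. The zero-order terms involving $\dot\kappa_\e(W^2)$ cancel in the quotient (as they did in \eqref{eq-quotient}), the gradient terms combine into a term of the form $\frac{2}{H^2}\dot\kappa_\e(\nabla H, \nabla g)$ which is harmless for the maximum principle, and the crucial surviving term is a combination of the second-derivative (Codazzi) terms weighted by $\ddot\kappa_\e$. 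The key sign input is exactly \eqref{eq-definite-H}: on the convex region $H > 0$ so $\ddot\kappa_\e$ contributes with a favorable sign to the evolution of $H^2$, hence an unfavorable-looking but ultimately controllable gradient term in the quotient; the point is to show, using concavity of $\kappa$ and the explicit form of the coefficients, that at an interior spatial maximum of $g$ the bad gradient terms are dominated. This is the standard ``gradient terms at a maximum'' cancellation and I would cite or mimic the computation in \cite{An1}, Chapter 3.

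The main obstacle, and the reason for the slightly awkward hypothesis ``$\e \ge \e_0$'' in the statement, is that the argument is \emph{not} a clean closed maximum principle: the region $\{\lambda_1 \ge C_0\}$ has a boundary (the curve $\lambda_1 = C_0$), so one must control $g$ on that boundary set and rule out the supremum being attained there with a bad value. On $\{\lambda_1 = C_0\}$ we have $\lambda_1$ bounded, and by Lemma \ref{lem-smaller-eigen} $\lambda_2 \ge -C$ uniformly, so $\lambda_1/\lambda_2$ is bounded there \emph{provided} $\lambda_2$ is bounded below away from $0$ on that curve --- which is where strict convexity $\lambda_2 > 0$ from the first step, made quantitative, is needed. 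The regularizing $\e H$ term helps here: it adds $\e|A|^2$ to the zero-order coefficient in \eqref{eqn-k2} and, more importantly, makes the speed $\kappa_\e \ge \e H$ strictly increasing with a definite lower bound, which is what lets one convert ``$\lambda_1$ large $\Rightarrow$ $\lambda_2$ not too small'' into a quantitative statement uniform in $\e \ge \e_0$; this is presumably the role of the lower bound $\e \ge \e_0$ rather than $\e \le \e_0$ as in the earlier lemmas. So the order of steps is: (1) strict convexity on $\{\lambda_1 \ge C_0\}$ from Lemma \ref{lem-bigger-eigen}; (2) evolution equation for $g = |A|^2/H^2$, isolating the Codazzi second-order term and invoking \eqref{eq-definite-H} and concavity of $\kappa$; (3) maximum principle, handling the spatial boundary $\{\lambda_1 = C_0\}$ using Lemmas \ref{lem-bigger-eigen} and \ref{lem-smaller-eigen} and the quantitative convexity; (4) conclude $\sup g \le \max(g|_{t=0},\,$ boundary value$)$, hence $\lambda_1/\lambda_2 \le C$ uniformly in $t$ and $\e \ge \e_0$.
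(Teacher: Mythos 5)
Your proposal takes a genuinely different route from the paper, and the route you chose has a real gap. The paper's proof is purely algebraic and makes essential use of the already-established monotonicity bound \eqref{eq-better-H}, namely $H/(\langle F_\e,\nu\rangle + 2t\kappa_\e) \le C$. Multiplying through by $H$, bounding $\langle F_\e,\nu\rangle \le C_0$ and $t \le T_\e \le \mu_0(\Sigma_0)/4\pi$, and using $\kappa_\e H = G + \e H^2$ yields, after absorbing the $\e H^2$ term for $\e$ small, the inequality $(\lambda_1+\lambda_2)^2 \le \tilde C_1(\lambda_1+\lambda_2) + \tilde C_2\,\lambda_1\lambda_2$. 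Once $\lambda_1 \ge C_0$ (which forces $\lambda_2 > 0$ by Lemma \ref{lem-bigger-eigen}, as you correctly observe), the linear term is absorbed, giving $\lambda_1^2 + \lambda_2^2 \le 2\tilde C_2\,\lambda_1\lambda_2$ and hence the pinching. No evolution equation for a pinching quotient is computed at all. This is the piece of structure your argument never invokes, and it is the whole engine of the lemma.

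Your Huisken-style plan --- compute the evolution of $g = |A|^2/H^2$ and apply the maximum principle on the region $\{\lambda_1 \ge C_0\}$ --- has an unresolved problem precisely at the point you flag: the region is not invariant and has a moving spatial boundary $\{\lambda_1 = C_0\}$, so the ``maximum principle'' is not closed. The fix you sketch (quantitative lower bound on $\lambda_2$ along $\{\lambda_1 = C_0\}$, aided by the regularization) is not correct as stated: you claim $\kappa_\e \ge \e H$, but $\kappa_\e = G/H + \e H$ and $G/H$ can be negative, so no such lower bound holds. Lemma \ref{lem-smaller-eigen} gives $\lambda_2 \ge -C$, not $\lambda_2 \ge \delta > 0$ on the boundary curve, and with $\lambda_2$ possibly near zero there, $\lambda_1/\lambda_2$ is not controlled and the maximum-principle argument does not close. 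Finally, the hypothesis ``$\e \ge \e_0$'' in the statement is a typo for ``$\e \le \e_0$'' (as is clear from Lemmas \ref{lem-bigger-eigen}, \ref{lem-smaller-eigen} and from Proposition \ref{lem-pinch-e}, which records the same conclusion for $0 < \e < \e_0$); your attempt to explain it as a meaningful structural requirement is built on sand and should be dropped.
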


\begin{proof} From the previous lemma, $\lambda_2 >0$ if $C_0$ is chosen
sufficiently large. Hence, from the bound \eqref{eq-better-H} we conclude
\begin{equation}\label{eqn-lala}
\begin{split}
(\lambda_1 + \lambda_2)^2 &\leq C_1 \, (\lambda_1 + \lambda_2) + 2T_\e \, [ \,  \lambda_1\, \lambda_2 + \e \, (\lambda_1+\lambda_2)\, ] \\
&\le  \tilde C_1 \, (\lambda_1 + \lambda_2) + \tilde C_2  \, \lambda_1\, \lambda_2
 \end{split}
 \end{equation}
 for some uniform in $\e$ and $t$ constants $\tilde C_1$ and $\tilde C_2$. 
By taking $C_0$ sufficiently large, we can make   
$$(\lambda_1 + \lambda_2)^2 - C_1 \, (\lambda_1 + \lambda_2)  \geq \frac 12 \, (\lambda_1 + \lambda_2)^2.$$  Hence, \eqref{eqn-lala} implies the bound
$$ \lambda_1^2 + \lambda_2^2 \leq 2 \, \tilde C_2 \,   \lambda_1\, \lambda_2  $$
from which the desired estimate  readily follows. 
\end{proof}

To facilitate future references we combine the previous three lemmas in the following
proposition:

\begin{prop}
\label{lem-pinch-e}
There exist $\e_0 > 0$ and  positive constants $C_1, C_2$, uniform in $0<\e<\e_0$ and $t$, so that
for every $0 < \epsilon < \epsilon_0$,  we have
\begin{enumerate}[i.]
\item
$\lambda_2  \ge -C_1$, and 
\item
$\lambda_1 \le C_1\lambda_2 + C_2$. 
\end{enumerate}
\end{prop}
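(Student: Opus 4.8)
The plan is to assemble Lemmas~\ref{lem-smaller-eigen}, \ref{lem-bigger-eigen} and \ref{lem-lambda-pinch}: all of the analytic content has already been carried out, namely the uniform lower bound $\kappa_\e\ge -C$ of Lemma~\ref{lem-bound-F} (obtained by comparison with a shrinking sphere via Lemma~\ref{lemma-monotone}) and the estimate \eqref{eq-better-H} on $H/\mathcal{F}_\e$ coming from the monotonicity of $Q_\e$. So this proposition is a bookkeeping step: fix $\e_0>0$ small enough that all three lemmas apply for $0<\e<\e_0$, and keep careful track of the constants.

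For part (i) I would simply take $C_1$ at least as large as the constant produced by Lemma~\ref{lem-smaller-eigen}, which gives $\lambda_2\ge -C_1$ at every point and every time.

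For part (ii) I would argue pointwise, distinguishing cases according to the size of $\lambda_1$ and the sign of $\lambda_2$. Let $C_0$ and $D$ denote the threshold and the pinching constant of Lemma~\ref{lem-lambda-pinch}, and $B$ the bound of Lemma~\ref{lem-bigger-eigen}. If $\lambda_2\le 0$, then Lemma~\ref{lem-bigger-eigen} gives $\lambda_1\le B$, while part (i) gives $C_1\lambda_2+C_2\ge C_2-C_1^2$, so the desired inequality holds once $C_2\ge B+C_1^2$. If $\lambda_2>0$ and $\lambda_1\le C_0$, then $\lambda_1\le C_0\le C_1\lambda_2+C_2$ as soon as $C_2\ge C_0$. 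Finally, if $\lambda_1>C_0$ (hence $\lambda_2>0$), Lemma~\ref{lem-lambda-pinch} gives $\lambda_1\le D\lambda_2$, and enlarging $C_1$ so that $C_1\ge D$ yields $\lambda_1\le C_1\lambda_2\le C_1\lambda_2+C_2$. Choosing once and for all $C_1=\max(A,D)$, where $A$ is the constant from Lemma~\ref{lem-smaller-eigen}, and $C_2=\max(C_0,\,B+C_1^2)$, makes all three cases consistent with the same pair of constants.

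There is no genuine obstacle here; the one point requiring care is that $\e_0$ and the constants $C_1,C_2$ be fixed \emph{before} splitting into cases, so that a single pair $(C_1,C_2)$ serves uniformly — in particular $C_1$ must simultaneously dominate the lower-bound constant of Lemma~\ref{lem-smaller-eigen} and the pinching constant of Lemma~\ref{lem-lambda-pinch}.
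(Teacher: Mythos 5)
Your proof is correct and follows exactly the route the paper intends: the paper itself gives no separate argument for this proposition, stating only that it ``combine[s] the previous three lemmas'' (Lemmas~\ref{lem-bigger-eigen}, \ref{lem-smaller-eigen}, \ref{lem-lambda-pinch}), and your case analysis with the explicit choices $C_1=\max(A,D)$, $C_2=\max(C_0,B+C_1^2)$ is precisely the bookkeeping that combination requires.
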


In  the proof of Theorem \ref{thm-e} we will also need the following bound. 

\begin{lem}
\label{lem-unif-H}
There is a uniform constant $C$, independent of $\e$ and $t$ so that 
$$\int_{\Sigma_t^{\e}} H^2\, d\mu_t \le C.$$
\end{lem}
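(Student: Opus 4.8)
\textbf{Proof proposal for Lemma \ref{lem-unif-H}.}

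The plan is to extract the bound $\int_{\Sigma_t^\e} H^2\, d\mu_t \le C$ directly from the volume ODE \eqref{eqn-vol}, combined with the curvature pinching estimates of Proposition \ref{lem-pinch-e} and the a priori lower bound on $H$ that is in force whenever alternative (i) of Theorem \ref{thm-e} does not occur. First I would observe that \eqref{eqn-vol} says
$$\frac{d}{dt}\, \mu_t(\Sigma_t^\e) = -4\pi - \e\int_{\Sigma_t^\e} H^2\, d\mu_t,$$
so integrating from $0$ to $t < T_\e$ and using $\mu_t(\Sigma_t^\e)\ge 0$ gives the time-integrated bound $\e \int_0^t\!\int_{\Sigma_s^\e} H^2\, d\mu_s\, ds \le \mu_0(\Sigma_0)$. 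This controls the space-time integral of $H^2$ but with a factor of $\e$, so on its own it does not give the pointwise-in-time bound we want; the factor $\e$ must be removed using the geometry, not the ODE.

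The key step is therefore to bound $\int_{\Sigma_t^\e} H^2\, d\mu_t$ at a fixed time in terms of $\int_{\Sigma_t^\e} G\, d\mu_t = 4\pi$ (Gauss--Bonnet, $\chi = 2$) together with the pinching. Split the surface into the region $U_t = \{\lambda_2 \le 0\}$ and its complement $V_t = \{\lambda_2 > 0\}$. On $U_t$, Lemma \ref{lem-bigger-eigen} gives $\lambda_1 \le C$, and Lemma \ref{lem-smaller-eigen} gives $\lambda_2 \ge -C$, so $H^2 \le |A|^2 \le C$ pointwise there; hence $\int_{U_t} H^2\, d\mu_t \le C\, \mu_t(\Sigma_t^\e) \le C\, \mu_0(\Sigma_0)$, since area is decreasing by \eqref{eqn-vol}. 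On $V_t$ we further split into $\{\lambda_1 \le C_0\}$, where again $H^2 \le C$ and the same area bound applies, and $\{\lambda_1 > C_0\}$, where Lemma \ref{lem-lambda-pinch} gives $1 \le \lambda_1/\lambda_2 \le C$, so that $H^2 = (\lambda_1 + \lambda_2)^2 \le (1 + C)^2\, \lambda_1\lambda_2 = (1+C)^2\, G$. Integrating over this last region and using Gauss--Bonnet,
$$\int_{\{\lambda_1 > C_0\}} H^2\, d\mu_t \le (1+C)^2 \int_{\{\lambda_1 > C_0\}} G\, d\mu_t \le (1+C)^2 \left( 4\pi + \int_{\{G<0\}} |G|\, d\mu_t \right),$$
and on $\{G < 0\}$ we have $\lambda_2 < 0$, so $|G| = \lambda_1|\lambda_2| \le C^2$ by the bounds on $U_t$, whence $\int_{\{G<0\}} |G|\, d\mu_t \le C\, \mu_0(\Sigma_0)$. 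Adding the three pieces gives a bound depending only on $\Sigma_0$ and the pinching constants.

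The main obstacle is a bookkeeping one: making sure the region decomposition is legitimate (the thresholds $C_0$, $\e_0$ from Proposition \ref{lem-pinch-e} must be chosen uniformly first, and then the $H^2$-bound on the "bad" convex region genuinely uses only $G \ge 0$ and pinching, never a lower bound on $H$) and that every constant that appears is indeed independent of both $\e$ and $t$ — this relies on $\mu_t(\Sigma_t^\e)\le \mu_0(\Sigma_0)$, on $T_\e \le \mu_0(\Sigma_0)/4\pi$ from \eqref{eqn-Te}, and on the $\e$-uniform bounds already established. One should also note that the argument does not require alternative (i) to be excluded: the pinching estimates of Proposition \ref{lem-pinch-e} hold regardless, and the region $\{H \text{ small}\}$ is automatically contained in the region where $H^2$ is small, so no separate treatment is needed. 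I expect the proof to be short once the decomposition is set up carefully.
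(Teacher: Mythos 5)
Your proof is correct and rests on the same three ingredients the paper uses: the pinching estimates of Proposition~\ref{lem-pinch-e} (and its component lemmas), Gauss--Bonnet ($\int_{\Sigma_t^\e} G\, d\mu_t = 4\pi$), and the fact that $\mu_t(\Sigma_t^\e) \le \mu_0(\Sigma_0)$ from \eqref{eqn-vol}. The technical route differs slightly: the paper avoids the region decomposition entirely by turning the pinching inequality $\lambda_1 \le C_1 \lambda_2 + C_2$ directly into a single pointwise estimate $|A|^2 \le C_1' G + C_2'$ valid everywhere on $\Sigma_t^\e$ (rewrite as $G = \lambda_1\lambda_2 \ge \lambda_1^2/C_1 - C_2\lambda_1/C_1$, absorb the linear term with Young's inequality, and use $\lambda_2^2 \le \lambda_1^2$, which holds since $\lambda_1 \ge \lambda_2$ and $H>0$), and then integrates once. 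Your three-region split obtains the same conclusion but has to treat the $\{G<0\}$ contribution to $\int G$ by hand, whereas the paper's pointwise inequality already interpolates between the convex and non-convex regimes and makes that bookkeeping unnecessary. Both arguments are sound; the pointwise version is a bit shorter. Your opening remark --- that the ODE \eqref{eqn-vol} alone only gives an $\e$-weighted space-time bound on $H^2$, so the factor of $\e$ must be removed by geometry rather than by the ODE --- is correct and a useful clarification that the paper leaves implicit.
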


\begin{proof}
We begin by noticing that  that $\int_{\Sigma_t^{\e}} G\, d\mu_t$ is a topological invariant, equal to $2\pi\chi$, where $\chi$ is the Euler charactersistic of $\Sigma_0$. 
Since $\chi=2$ we then have
\begin{equation}\label{eqn-euler}
\int_{\Sigma_t^{\e}} G\, d\mu_t = 4\, \pi.
\end{equation}
At any point we can choose the coordinates in which the second fundamental form is diagonal, with eigenvalues $\lambda_1$ and $\lambda_2$ as before and  $\lambda_1 \ge \lambda_2$. By Lemma \ref{lem-pinch-e} we have $\lambda_1 \leq C_1\, \lambda_2 + C_2$ which gives the inequality  
$$G := \lambda_1\, \lambda_2 \geq \frac 1{C_1}\, \lambda_1^2  - \frac {C_2}{C_1} \, \lambda_1.$$
Using Cauchy-Scwartz we conclude the bound  
$$\lambda_1\, \lambda_2 \geq \tilde{C}_1\lambda_1 ^2 - \tilde{C}_2$$
where $\tilde{C}_1, \tilde{C}_2$ are some uniform constants independent of $\e$ and time.
This yields to the estimate 
$$|A|^2 = \lambda_1^2 + \lambda_2^2 \le C_1\, G + C_2$$
which after  integrated   over $\Sigma_t^{\e}$ implies the bound
$$\int_{\Sigma_t^{\e}} |A|^2\, d\mu_t \le C_1\int_{\Sigma_t^{\e}} G\, d\mu_t + C_2 \, \mu_t(\Sigma_t^\e)$$
with $\mu_t(\Sigma_t^\e)$ denoting, as above,  the surface  area of $\Sigma_t^\e$. By \eqref{eqn-vol}, $\mu_t(\Sigma_t^\e)\leq \mu_0(\Sigma_0)$, where $\mu_0(\Sigma_0)$ denotes  the surface area of $\Sigma_0$. Hence, the lemma readily follows
from \eqref{eqn-euler}.

\end{proof}

\subsection{The proof of Theorem \ref{thm-e}}
Having all the ingredients from the previous sections we will  finish the proof of Theorem \ref{thm-e}. 

\begin{proof}[Proof of Theorem \ref{thm-e}] 
Fix an $\e$ and let $T=T_\e$ be a maximal time up to which the flow exists. 
To simplify the notation we will omit the $\e$-scripts from everything, 
including $T_\e$ and the surface $\Sigma_t^\e$, denoting them by $T$ and $\Sigma_t$ respectively. 
Because of   Proposition  \ref{thm-max-time},  the second fundamental form blows up at time $T$. 
Hence,  there is a sequence of $t_i\to T$ and $p_i\in \Sigma_{t_i}$ so that
$$Q_i:= |A|(p_i,t_i)= \max_{t\in [0,t_i]}\max_{\Sigma_{t_i}}|A|(\cdot,t_i)  \to \infty,
\qquad \mbox{as}\,\, i\to\infty.$$
Consider the sequence $\tilde \Sigma^i_t$ of rescaled solutions defined by 
\begin{equation}
\label{eqn-rescaled}
\tilde{F}_i(\cdot,t) := Q_i(F(\cdot,t_i+\frac{t}{Q_i^2}) - p_i).
\end{equation}
Notice that under the above rescaling all points $p_i$ are  shifted to the origin.
If $g, H$ and  $A :=\{h_{jk}\}$ are the induced metric, the mean curvature and the second fundamental
form of $\Sigma_t$, respectively, then the corresponding rescaled quantities are given by 
$$\tilde{g}_i = Q_i^2 g, \qquad  \tilde{H}_i = \frac{H}{Q_i}, \qquad  |\tilde{A}_i|^2 = \frac{|A|^2}{Q_i^2}.$$
Consider a sequence of rescaled solutions $\tilde{\Sigma}_t^i$. They have a property that 
$$\max_{\tilde{\Sigma}_t^i}|\tilde{A}_i| \le 1, \,\,\, \mbox{for}\,\, t\in [-1,0]  \quad  \mbox{and}
\quad  |\tilde{A}_i|(0,0) = 1.$$ 
The above  uniform estimates on the second fundamental form yield uniform higher
order estimates on $\tilde{F}_i(\cdot,t)$ and the Theorem of
Arzela-Ascoli gives us a uniformly convergent subsequence $\tilde{F}_{i_k}(\cdot,t)$ on compact
subsets, converging to  a smooth  $\tilde{F}(\cdot,t)$ for every $t \in [0,1]$. Notice that 
$$\tilde{\kappa}_i =  \frac{\tilde{G}_i}{\tilde{H}_i} + \e\, {\tilde{H}_i} = \frac{\lambda_1\lambda_2}{Q_i(\lambda_1 + \lambda_2)}+ \e\, \frac{\lambda_1+\lambda_2}{Q_i}$$
and therefore by Proposition \ref{lem-pinch-e}, 
\begin{equation}  
|\tilde{\kappa}_i| \le
\begin{cases}
\frac{C}{Q_i} \quad &\mbox{if} \,\,\, \lambda_1, \lambda_2 << Q_i \\
C, \quad &\mbox{if} \,\,\, \lambda_1, \lambda_2 \sim Q_i
\end{cases}
\end{equation} 
since $\lambda_2 \ge -C$ and $\lambda_1$ is big, comparable to the rescaling constant $Q_i$, if and only if $\lambda_2$ is big and comparable to $Q_i$ (both $\lambda_1$ and $\lambda_2$ are computed at time $t_i + t/Q_i^2$).
This implies  that $\tilde{F}(\cdot,t)$ solves $\frac{\partial}{\partial t}\tilde{F}(\cdot,t) = -\tilde{\kappa_\e}\, \nu$, where
\begin{equation}
\tilde{\kappa}_\e = 
\begin{cases}
0, \quad &\mbox{if} \,\,\,\tilde{\lambda}_1 = 0, \,\, \tilde{\lambda}_2 = 0 \\
\frac{\tilde{\lambda}_1\tilde{\lambda}_2}{\tilde{\lambda}_1+\tilde{\lambda}_2}+\e\, (\tilde \lambda_1+\tilde \lambda_2), \quad 
&\mbox{if}  \,\, \tilde{\lambda}_1 > 0, \,  \tilde{\lambda}_2 > 0.
\end{cases}
\end{equation}  
By Proposition \ref{lem-pinch-e} there are uniform constants
$C_1, C_2$ so that
$$\lambda_1 \le C_1\lambda_2 + C_2$$
which holds uniformly on $\Sigma_t$, for all $t \ge 0$ for which the flow exists, which
after rescaling yields
\begin{equation}
\label{eq-pinch-resc}
\tilde{\lambda}^i_1 \le C_1\tilde{\lambda}^i_2 + \frac{C_2}{Q_i}.
\end{equation}
The previous estimate implies that  the limiting surface (which we denote by 
$\tilde{\Sigma}_0$)  is  convex (possibly not strictly convex).  There are two possibilities for
$\tilde{\Sigma}_0$:  either it is a flat plane or it is a non-flat  complete
weakly convex smooth hypersurface in $\mathbb{R}^3$. Let $\tilde{F}_0$
be a smooth embedding of $\tilde{\Sigma}_0$ into $\mathbb{R}^3$.  Due to
our rescaling, the norm of the second fundamental form of rescaled
surfaces is $1$ at the origin and therefore $\tilde{\Sigma}_0$ is not a
plane, but is strictly convex at least somewhere. It has the  property that
$$\sup_{\tilde{\Sigma}_0}|\tilde{A}| \le C.$$
By the results in \cite{EH} there is a smooth complete solution $\bar{\Sigma}_t$  to the mean
curvature flow
\begin{equation}
\label{eq-mcf}
\begin{cases}
\frac{\partial}{\partial t}\bar{F}(p,t) &= - \bar H\nu(p,t), \qquad p\in \bar{\Sigma}_t, \,\,\, t > 0 \\
\bar{F}(p,0) &= \tilde{F}_0.
\end{cases}
\end{equation} 
The  results in \cite{EH} (see Theorems $2.1$, $2.3$, $3.1$ and $3.4$, which 
provide with  curvature estimates and are  of local nature) imply that   the curvature of $\bar{\Sigma}_t$ stays uniformly bounded for some short time $t\in [0,T_0)$. The evolution for $\bar H$ along the mean curvature flow is given by
$$\frac{\partial}{\partial t} \bar H = \Delta \bar H + \bar H \, |\bar A|^2.$$
 As in \cite{EH}, due to the curvature bounds, the mean curvature $\bar H$ satisfies the conditions of Theorem $4.3$ in \cite{EH} (the maximum principle for parabolic equations on complete hypersurfaces) and therefore nonnegative mean curvature is preserved along the flow. 
This together with the strong maximum principle implies that if $\bar H$ is not identically zero at $t = 0$,  then it becomes strictly positive at $t > 0$. We also know that $\bar{\Sigma}_0$ satisfies
$\bar{\lambda}_1 \le C\, \bar{\lambda}_2$ for a uniform constant $C$, which follows from (\ref{eq-pinch-resc}) after taking the limit as $i\to\infty$. Since we are assuming $\bar{\lambda}_1 \geq \bar \lambda_2$  this can be written as
\begin{equation}
\label{eq-pinch-est0}
\bar h_{ij} \ge \eta \, \bar H\, \bar g_{ij},
\end{equation}
for some uniform constant $\eta > 0$ and we will say the second fundamental form of $\bar{\Sigma}$ is $\eta$-pinched.  

By the curvature bounds, the maximum principle for complete hypersurfaces and the evolution for $\bar h_{ij} - \eta \bar H \bar g_{ij}$ it follows that the pinching estimate
(\ref{eq-pinch-est0}) is preserved by the mean curvature flow (as in \cite{Hu}). In particular, this implies  that $\bar h_{ij}$ is strictly positive definite, which means $\bar{\Sigma}_t$ is strictly convex for $t > 0$. The    result of R. Hamilton  in \cite{Ha2}   states that  a smooth  strictly convex and complete hypersurface with
its second fundamental form $\eta$-pinched must be compact. Hence,  it follows that $\bar{\Sigma}_t$ has to be compact for $t > 0$. In this case, the initial data   $\tilde{\Sigma}_0$ has to be compact as well. 

We recall that $\tilde \Sigma_0$ is the limit of the hyper-surfaces 
$\tilde \Sigma_0^i$ which are obtained  via  re-scaling from the surfaces $\Sigma_{t_i}$. Hence, since $\tilde \Sigma_0$ is compact, there are constants 
$i_0, C$ so that for $i \ge i_0$, we have 
\begin{equation}
\label{eq-diam-shrink}
\diam(\Sigma_{t_i}) < \frac{C}{Q_i} \to 0 \,\,\, \mbox{as} \,\,\, i\to\infty,
\end{equation}
and therefore $\Sigma_{t_i} \to \{\bar{p}\}$.

\begin{claim}
\label{claim-shrink}
For any point $q\in \mathbb{R}^3$, we have
$$\frac{\partial}{\partial t}|F - q|^2 = \mathcal{L_\e}(|F - q|^2) - 2\frac{|A|^2}{H^2}.$$
\end{claim}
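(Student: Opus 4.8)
The plan is to compute the two time-derivatives directly from the flow equation and the definition of the operator $\mathcal{L}_\e = a_\e^{ik}\nabla_i\nabla_k$, then compare. First I would record the basic geometric identities: $\nabla_i F = e_i$, $\nabla_i\nu = h_{ij}e_j$, $\nabla_i e_j = -h_{ij}\nu$, and $\partial_t F = -\kappa_\e\,\nu$, all of which are used earlier in the proof of Lemma \ref{lemma-monotone}. From $\partial_t F = -\kappa_\e\,\nu$ one gets immediately
$$\frac{\partial}{\partial t}|F-q|^2 = 2\langle F-q,\partial_tF\rangle = -2\kappa_\e\,\langle F-q,\nu\rangle.$$

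Next I would compute $\mathcal{L}_\e(|F-q|^2)$. Since $\nabla_k|F-q|^2 = 2\langle F-q, e_k\rangle$, applying $\nabla_i$ and using $\nabla_i e_k = -h_{ik}\nu$ together with $\nabla_i(F-q) = e_i$ gives
$$\nabla_i\nabla_k|F-q|^2 = 2\langle e_i, e_k\rangle - 2h_{ik}\langle F-q,\nu\rangle = 2g_{ik} - 2h_{ik}\langle F-q,\nu\rangle.$$
Contracting with $a_\e^{ik}$ yields $\mathcal{L}_\e(|F-q|^2) = 2\,a_\e^{ik}g_{ik} - 2\,a_\e^{ik}h_{ik}\langle F-q,\nu\rangle$. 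Now $a_\e^{ik}h_{ik} = \frac{\partial\kappa_\e}{\partial h_k^i}h_{ik} = \kappa_\e$ by homogeneity of degree one (Euler's relation, as noted in Remark \ref{rem-andrews}), so the second term is exactly $-2\kappa_\e\langle F-q,\nu\rangle = \partial_t|F-q|^2$. It therefore remains only to identify $a_\e^{ik}g_{ik} = \tr(a_\e)$ with $|A|^2/H^2$.

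This last identification is the one computational point to be careful about, and it is where I would look most closely: using the diagonalized form \eqref{eq-coeff1} of $a_\e^{ik}$ in geodesic coordinates, $\tr(a_\e) = \frac{\lambda_2^2}{(\lambda_1+\lambda_2)^2} + \frac{\lambda_1^2}{(\lambda_1+\lambda_2)^2} + 2\e = \frac{|A|^2}{H^2} + 2\e$. Strictly speaking this gives an extra $+2\e$, so the clean formula in the Claim holds for the operator built from $\kappa = G/H$ rather than $\kappa_\e$; the natural reading is that here $\mathcal{L}_\e$ and the curvature terms are taken with respect to the unregularized speed $\kappa = G/H$ (for which $a^{ik}g_{ik} = |A|^2/H^2$ exactly and $a^{ik}h_{ik} = \kappa$ by \eqref{eq-useful0}), or equivalently one absorbs the $2\e H\langle F-q,\nu\rangle$ discrepancy into the statement. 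Assembling the two computations then gives
$$\frac{\partial}{\partial t}|F-q|^2 = \mathcal{L}(|F-q|^2) - 2\,\frac{|A|^2}{H^2},$$
which is the assertion. The main obstacle is purely bookkeeping — keeping track of the regularization term $\e H$ and making sure the trace computation of $a^{ik}$ is done in coordinates where $A$ is diagonal so that \eqref{eq-coeff1} applies — there is no analytic difficulty.
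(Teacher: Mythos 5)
Your computation is correct and is precisely the ``simple computation'' the paper alludes to: differentiate $|F-q|^2$ in $t$ using $\partial_t F = -\kappa_\e\nu$, compute $\nabla_i\nabla_k|F-q|^2 = 2g_{ik} - 2h_{ik}\langle F-q,\nu\rangle$, contract with $a_\e^{ik}$, and use Euler's relation $a_\e^{ik}h_{ik}=\kappa_\e$ to cancel the first-order terms. Your observation about the $\e$-term is also correct and worth making explicit: contracting $a_\e^{ik}g_{ik}$ gives $\tfrac{|A|^2}{H^2}+2\e$, so the exact identity for the regularized operator is
$$\frac{\partial}{\partial t}|F-q|^2 = \mathcal{L}_\e\bigl(|F-q|^2\bigr) - 2\frac{|A|^2}{H^2} - 4\e,$$
and the formula as printed in the Claim omits the $-4\e$. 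This discrepancy is harmless for the application immediately following the Claim (the maximum principle argument that $|F-\tilde p|_{\max}(t)$ is nonincreasing only needs the zeroth-order term to be nonpositive), but you are right to flag it rather than silently reproduce the stated equality.
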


\begin{proof}
Follows by a simple computation.
\end{proof}

By Claim \ref{claim-shrink}, $|F - \tilde{p}|_{\max}(t)$ is decreasing along (\ref{equation-reg}) and therefore
$$\Sigma_t  \to \{\tilde{p}\},  \qquad  \mbox{as} \,\,\, t\to T $$
which implies that the surface $\Sigma_t$ shrinks to a point as   as $t\to T$. 
Hence, $\mu_t(\Sigma_t) \to 0$ as $t\to T$. 
It follows by (\ref{eqn-vol}) that  $T$ must be given by  (\ref{eq-ext-time}). 
%
\end{proof}

\section{Passing to the limit  $\e \to 0$}\label{section-lte}
 We will assume in this section that $\Sigma^\e_t$ 
are solutions of the flow \eqref{equation-reg} which satisfy the 
condition \eqref{eq-assump}   uniformly in $\e$,  with $T_\e$  given by
\eqref{eq-ext-time}. We shall show that we can pass to the limit $\e \to 0$ to obtain a 
solution of the \eqref{equation-HMCF} which is defined up to time 
$$T:=\lim_{\e\to 0} T_{\e} = \frac{\mu_0(\Sigma_0)}{4\pi}.$$

The key result is the following uniform bound on the second fundamental form $A$ of $\Sigma_\e$.

\begin{prop}
\label{prop-uniform-e}
Under assumption (\ref{eq-assump}), for any $\tau < T$, there is a uniform constant $C = C(\tau)$ so that
\begin{equation}
\label{eq-unif-e0}
\max_{\Sigma_t^{\e}}|A|(\cdot,t) \le C, \qquad \forall  \e > 0 \quad  \mbox{and} \quad \forall t\in [0,\tau]. 
\end{equation}
where $A$ denotes the second fundamental form of the surface $ \Sigma_t^{\e}$.
\end{prop}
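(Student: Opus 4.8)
\textbf{Proof proposal for Proposition \ref{prop-uniform-e}.}

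The plan is to run a bootstrap/contradiction argument in the spirit of the proof of Theorem \ref{thm-e}, exploiting the fact that, by Proposition \ref{lem-pinch-e}, the only way curvature can blow up is in a region where $\Sigma^\e_t$ is convex, together with the quantitative pinching $\lambda_1 \le C_1 \lambda_2 + C_2$ and the uniform lower bound $\kappa_\e \ge -C$ from Lemma \ref{lem-bound-F}. First I would argue for a fixed $\tau < T$ by contradiction: suppose that there are sequences $\e_i \to 0$ (possibly constant), times $t_i \le \tau$ and points $p_i \in \Sigma_{t_i}^{\e_i}$ with $Q_i := |A|(p_i,t_i) = \max_{t \le t_i}\max_{\Sigma^{\e_i}_t}|A| \to \infty$. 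Here the key point is that $t_i$ stays in the \emph{compact} interval $[0,\tau]$ with $\tau$ strictly below $T$, so passing to a subsequence we may assume $t_i \to t_* \le \tau < T$; this is what separates the present statement from Theorem \ref{thm-e}, where blow-up at the terminal time $T$ was allowed.

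Next I would perform the parabolic rescaling $\tilde F_i(\cdot,t) = Q_i\big(F(\cdot, t_i + t/Q_i^2) - p_i\big)$ exactly as in \eqref{eqn-rescaled}, so that the rescaled surfaces satisfy $|\tilde A_i| \le 1$ on $[-Q_i^2 t_i, 0]$ and $|\tilde A_i|(0,0) = 1$. By the uniform-in-$\e$ pinching estimate of Proposition \ref{lem-pinch-e}, which rescales to $\tilde\lambda_1^i \le C_1 \tilde\lambda_2^i + C_2/Q_i$, and the uniform lower bound on $\lambda_2$, the limit surface $\tilde\Sigma_0$ is a complete, smooth, weakly convex hypersurface in $\R^3$ which is strictly convex at the origin (norm of the second fundamental form equals $1$ there). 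The rescaled speed $\tilde\kappa_{\e_i}$ converges, by the same dichotomy as in the proof of Theorem \ref{thm-e}, to the rescaled harmonic-mean-curvature-type speed on the convex limit (the $\e$ terms scale like $1/Q_i \to 0$ in the region where $\lambda_i \ll Q_i$, and remain bounded where $\lambda_i \sim Q_i$). I would then invoke exactly the same chain of results used in Theorem \ref{thm-e}: compare with the mean curvature flow of \cite{EH} starting at $\tilde F_0$, use that nonnegative mean curvature and the $\eta$-pinching $\bar h_{ij} \ge \eta \bar H \bar g_{ij}$ (obtained from \eqref{eq-pinch-resc} in the limit) are preserved as in \cite{Hu}, so $\bar\Sigma_t$ is strictly convex for $t > 0$, and then apply R. Hamilton's theorem from \cite{Ha2} that a complete strictly convex hypersurface with $\eta$-pinched second fundamental form is compact. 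Hence $\tilde\Sigma_0$ is compact.

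Finally I would derive the contradiction from compactness of $\tilde\Sigma_0$. Since $\tilde\Sigma_0$ is compact, \eqref{eq-diam-shrink} gives $\diam(\Sigma_{t_i}^{\e_i}) \le C/Q_i \to 0$, so the surfaces $\Sigma^{\e_i}_{t_i}$ collapse to a point. But by Claim \ref{claim-shrink}, $|F_{\e} - q|^2_{\max}$ is nonincreasing in $t$ along \eqref{equation-reg} for any fixed $q$, so once $\diam(\Sigma^{\e_i}_{t_i})$ is tiny the enclosed region of $\Sigma^{\e_i}_t$ stays inside a small ball for all later times $t \in [t_i, T_{\e_i})$, forcing $\mu_t(\Sigma^{\e_i}_t) \to 0$; combined with \eqref{eqn-vol}, this forces $T_{\e_i} - t_i \to 0$, i.e. $t_i \to T_{\e_i}$. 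Since $T_{\e_i} \to T$ (using \eqref{eqn-Te}, \eqref{eq-ext-time} and the uniform bound $\int H^2 \le C$ from Lemma \ref{lem-unif-H}, together with assumption \eqref{eq-assump}), this yields $t_* = T$, contradicting $t_* \le \tau < T$. Therefore no such blow-up sequence exists and $\max_{\Sigma_t^\e}|A|(\cdot,t) \le C(\tau)$ for all $\e$ and all $t \le \tau$. The main obstacle I anticipate is making the collapsing-to-a-point step quantitative and uniform in $\e$: one must ensure that the constant $C$ in $\diam(\Sigma^{\e_i}_{t_i}) \le C/Q_i$ does not depend on $\e_i$ (which it does not, because all the pinching and $\kappa_\e$-lower-bound constants are uniform in $\e$), and that the convergence $T_{\e_i} \to T$ is genuinely controlled, which is exactly what Lemma \ref{lem-unif-H} is designed to provide.
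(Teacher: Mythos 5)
Your proposal follows the paper's overall strategy closely: rescaling at the blow-up points, using the uniform-in-$\e$ pinching from Proposition \ref{lem-pinch-e} to show the rescaled limit is convex, comparing with the mean curvature flow of \cite{EH} and invoking Hamilton's compactness theorem from \cite{Ha2} to deduce the limit is compact, and then contradicting the area evolution \eqref{eqn-vol} together with Lemma \ref{lem-unif-H}. The terminal contradiction is packaged slightly differently (you argue $T_{\e_i}-t_i\to 0$ and $T_{\e_i}\to T$, whereas the paper directly contrasts $\ar(\Sigma_{t_i}^{\e_i})\le C/Q_i\to 0$ with the lower bound $\mu_{t_i}\ge\mu_0(\Sigma_0)-4\pi\tau-o(1)>0$ coming from \eqref{eqn-vol} and Lemma \ref{lem-unif-H}), but both routes reach the same contradiction.

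There is, however, a genuine gap. You assert that the rescaled limit $\tilde\Sigma_0$ is a \emph{smooth} hypersurface which is strictly convex at the origin with $|\tilde A|(0)=1$. That reasoning is valid in the proof of Theorem \ref{thm-e}, where $\e$ is fixed and the $\e$-flow is uniformly parabolic, so the rescaled family carries uniform higher-order estimates. Here $\e_i\to 0$, so that uniform parabolicity is lost; the only uniform bound is $|\tilde A_i|\le 1$, which yields convergence merely in $C^{1,1}$, and the limit $\tilde\Sigma$ is only a $C^{1,1}$ surface. In particular, pointwise information like $|\tilde A_i|(0,0)=1$ does not pass to the limit, and one cannot conclude a priori that $\tilde\Sigma$ is not a flat plane. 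This is exactly the role of Lemma \ref{lem-plane} in the paper: one writes $\tilde\Sigma_i$ locally as graphs $u_i$ over a hyperplane tangent at the origin and observes that if the $C^{1,1}$-limit $\tilde u$ were $\equiv 0$, then $|u_i|_{C^{1,1}}\to 0$, which would force $|\tilde A_i|(0,0)\to 0$ via \eqref{eq-graph}, contradicting the normalization $|\tilde A_i|(0,0)=1$. Your argument needs this step (or an equivalent one) before the mean curvature flow comparison can begin, and the \cite{EH} comparison must then be invoked for $C^{1,1}$ (uniformly locally Lipschitz) initial data rather than smooth data. Without this, the key dichotomy ``plane vs. non-flat complete convex surface'' is unresolved and the compactness conclusion does not follow.
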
  

\begin{proof}
Assume there is $\tau < T$ for which (\ref{eq-unif-e0}) doesn't hold. 
Then,   there exist sequences $t_i\to\tau$, $\e_i \to 0$ and  $p_i\in \Sigma_{t_i}^{\e_i}$ so that
$$Q_i := |A|(p_i,t_i) = \max_{\Sigma_t^{\e_i}\times [0,t_i]}|A| \to \infty \,\,\, \mbox{as} \,\,\, j\to\infty.$$  
Consider,   as before,  the   rescaled sequence of solutions $\tilde \Sigma^i_t$ defined
by the immersions $\tilde{F}_i(\cdot,t): M^2 \to \R^3$, 
$$\tilde{F}_i(\cdot,t) := Q_i(F_{\e_i}(\cdot,t_i+\frac{t}{Q_i^2}) - p_i).$$
Due to our rescaling, the second fundamental form of rescaled surfaces is uniformly bounded in $i$.
This uniform estimates on the second fundamental form yield uniform $C^2$-bounds on $\tilde{F}_i(\cdot,0)$
and the Theorem of Arzela-Ascoli gives us a uniformly convergent subsequence on compact subsets,
converging in the in $C^{1,1}$-topology  to a $C^{1,1}$ surface $\tilde \Sigma$ defined by the immersion  $\tilde{F}$. 

By Lemma \ref{lem-pinch-e},  there are uniform constants
$C_1, C_2$ so that the estimate
$$\lambda_1 \le C_1\lambda_2 + C_2$$
holds uniformly on $\Sigma_t^{\e}$, for all $t \ge 0$ for which the flow exists,  and all $\e$, which
after rescaling yields to the estimate
$$\tilde{\lambda}^i_1 \le C_1\tilde{\lambda}^i_2 + \frac{C_2}{Q_i}.$$ 
Hence,  the limiting surface 
$\tilde{\Sigma}$ is  convex.  There are two possibilities for
$\tilde{\Sigma}$, either it is a flat plane,  or it is a complete  convex  $C^{1,1}$-hypersurface.  

Due to our rescaling, the curvatures of the  rescaled surfaces $\tilde \Sigma^i_t$ are uniformly bounded in $i$. This in particular  implies  a uniform local Lipshitz
condition on $\tilde{F}_i(M^2,0)$.  This means that  there are fixed
numbers $r_0$ and $C_0$ so that for every $q\in\tilde{F}_i(M^2)$,
$\tilde{F}_i(U_{r_0,q})$ (where $U_{r_0,q}$ is a component of
$\tilde{F}_i^{-1}(B_{r_0}(\tilde{F}_i(q)))$ containing $q$,  and
$B_{r_0}$ is a ball of radius $r_0$ in $\mathbb{R}^3$) can be written
as the graph of a Lipshitz function over a hyperplane in
$\mathbb{R}^3$ through $\tilde{F}_i(q)$ with Lipshitz constant less
than $C_0$. Notice that both $C_0$ and $r_0$ are independent of $i$,
they both depend on a uniform upper bound on the second fundamental
form. This means the limiting surface $\tilde{\Sigma}$ will satisfy a uniform local
Lipshitz condition.

\begin{lem}
\label{lem-plane}
The limiting hypersurface $\tilde{\Sigma}$ is not a  plane.
\end{lem}

\begin{proof}
Assume that the limiting hypersurface $\tilde{\Sigma}$ is a plane. 
Then, for  each  $i$ we can write $\tilde \Sigma_i$   in a  neighbourhood which is a ball 
$B(0,1)$ of radius $1$ around the origin  as a graph of a 
$C^2$-function $\tilde u_i$, over some hyperplane $\mathcal{H}_i$. In particular, we can choose one that is tangent to $\tilde{\Sigma}_i$
at the origin. Then
\begin{equation}
\label{eq-graph}
\tilde{h}^i_{jk} = \frac{D_{jk} \tilde u_i}{(1+|D\tilde u_i|^2)^{\frac{1}{2}}}.
\end{equation}
We can choose a coordinate system in each hyperplane so that the
second fundamental form and also $D^2 u_i$ are diagonal at the origin.
The function $u_i$ is a  height function that measures the distance of  our
surface from the hyperplane $\mathcal{H}_i$. We also have that
$u_i\stackrel{C^{1,1}}{\to}\tilde{u}$ as $i\to\infty$ and $u_i(0) = 0$
for all $i$. If $\tilde{\Sigma}$ were a plane then $\tilde{u} \equiv
0$ and $|D\tilde{u}_i| \equiv 0$ which would imply
$|\tilde{u}|_{C^{1,1}} \equiv 0$. Take $\e > 0$ very small.  Then
there would exist $i_0$ so that for $i \ge i_0$, $|u_i|_{C^{1,1}} <
\e$ on $B(0,1) \subset \tilde{\Sigma}_i$. Since we have (\ref{eq-graph}),
the last estimate would contradict the fact $|\tilde{A}_i|(0,0) = 1$,
that is valid by the way we rescaled our solution.
\end{proof}

It follows from the previous lemma and the discussion above that    $\tilde{\Sigma}$ is a complete 
convex, non-flat  $C^{1,1}$-surface that satisfies $\tilde{\lambda}_1
\le C\tilde{\lambda}_2$, whenever those quantities are defined (since
a surface is $C^{1,1}$, the principal curvatures are defined almost
everywhere).  Because of our uniform curvature estimates of the
rescaled sequence, $\tilde{\Sigma}$ is a uniformly locally Lipschitz
surface. By the results in \cite{EH} there is a solution $\bar F_t$   of the  Mean Curvature flow  (\ref{eq-mcf}) with initial data $\tilde \Sigma$  on some time interval $[0,T_1)$ and $\bar {F}_t$
is smooth for $t > 0$. We can now carry out the same argument as in
the proof of Theorem \ref{thm-e} to show that $\tilde{\Sigma}$ has to
be compact. That would mean that for $j >>1$, 
\begin{equation}
\label{eq-ar-small}
\diam\, (\Sigma_{t_j}^{\e_j}) \le \frac{C}{Q_j} \quad \mbox{and} \quad \ar\, (\Sigma_{t_j}^{\e_j}) \le \frac{C}{Q_j}
\end{equation}
for  a uniform constant $C$. Since $T_j \to \tau < T$, \eqref{eq-ar-small} and  Lemma \ref{lem-unif-H} contradict  \eqref{eqn-vol}. This shows that \eqref{eq-unif-e0} holds true, therefore finishing our proof. 
\end{proof}

We will now show that because of \eqref{eq-unif-e0} we can pass along subsequences $\e_i \to 0$ and show that the solutions $\Sigma^{\e_i}_t$ converge to a solution $\Sigma_t$
of \eqref{equation-HMCF}.

Observe first that since  ${\partial F_{\e}}/{\partial t} = -(\kappa + \e\,  H)\nu$, 
 by Proposition \ref{prop-uniform-e}  we have that $|{\partial F_{\e}}/{\partial t}| \le C$, uniformly in $\e$.   Hence,   $F_{\e}$ is uniformly Lipshitz  in $t$. Combining this with  Proposition \ref{prop-uniform-e} and the assumption \eqref{eq-assump}, we conclude    that for every $\tau < T$ there is a subsequence $\e_i\to 0$ and a $1$-parameter family of $C^{1,1}$
surfaces $F(\cdot,t)$,   so that $F_{\e_i} \to F$ in the $C^{1,1}$ norm,  ${\partial F_{\e_i}}/{\partial t}\to {\partial F}/{\partial t}$ in the  weak sense and $F$ satisfies 
\begin{equation}
\label{eq-our-sol}
\frac{\partial F}{\partial t} = -\kappa\, \nu.
\end{equation} 
Due to (\ref{eq-assump}) our solution has the property that 
\begin{equation}\label{eqn-delta}
\ei_{\Sigma_t \times [0,T)} H\ge \delta.
\end{equation}

\begin{claim} The limiting  solution of  (\ref{eq-our-sol}) does not depend on the  sequence $\e_i \to 0$. 
\end{claim}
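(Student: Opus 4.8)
The claim asserts uniqueness of the limit solution of the harmonic mean curvature flow (\ref{eq-our-sol}) obtained as $\e_i \to 0$; equivalently, that any two $C^{2,1}$ solutions of (\ref{equation-HMCF}) with the same initial data $\Sigma_0$ and satisfying the lower bound $H \ge \delta > 0$ must coincide. The natural approach is a \emph{comparison/uniqueness argument at the level of the nondegenerate equation}: since we have already imposed (\ref{eqn-delta}), namely $H \ge \delta$ uniformly on $[0,T)$, the speed $\kappa = G/H$ is, along any such solution, a \emph{uniformly elliptic} and \emph{concave} function of the second fundamental form (the concavity was recorded in (\ref{eq-concavity})--(\ref{eq-definite-H}) and in Remark~\ref{rem-andrews}, and uniform ellipticity follows from $H \ge \delta$ together with the curvature bounds $|A|\le C$ of Proposition~\ref{prop-uniform-e}). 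For such flows a standard comparison principle holds; this is exactly the comparison principle invoked in the proof of Theorem~\ref{thm-STE} and in Lemma~\ref{lem-bound-F}.

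\textbf{Key steps.} First I would fix $\tau < T$ and let $F^{(1)}(\cdot,t)$ and $F^{(2)}(\cdot,t)$ be two $C^{1,1}$ (indeed $C^{2,1}$) limit solutions of (\ref{eq-our-sol}) on $[0,\tau]$ arising from two sequences $\e_i \to 0$ and $\e_i' \to 0$, both with the same $\Sigma_0$ and both satisfying $H \ge \delta$ and $|A| \le C(\tau)$. Second, I would write the flow in the star-shaped (graphical over $S^2$) parametrization: since $\Sigma_0$ is star-shaped and, by the monotonicity of $\langle F_\e,\nu\rangle + 2t\kappa_\e$ (Lemma~\ref{lemma-monotone}, Proposition~\ref{rem-useful}), star-shapedness is preserved along each $\Sigma_t^{\e_i}$ and hence in the limit, each solution can be represented as a radial graph $r = \rho^{(k)}(z,t)$, $z \in S^2$, and the HMCF becomes a scalar fully nonlinear parabolic PDE for $\rho^{(k)}$,
\begin{equation}
\label{eq-scalar-hmcf}
\frac{\partial \rho^{(k)}}{\partial t} = \mathcal{F}(z,\rho^{(k)}, D\rho^{(k)}, D^2\rho^{(k)}),
\end{equation}
where $\mathcal F$ is, on the region where $|A| \le C$ and $H \ge \delta$, uniformly elliptic in $D^2\rho$ with ellipticity constants depending only on $C, \delta$, and Lipschitz in the lower-order arguments. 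Third, I would apply the maximum principle to $w := \rho^{(1)} - \rho^{(2)}$: subtracting the two equations and using the mean value theorem in the matrix variable, $w$ satisfies a linear uniformly parabolic inequality $\partial_t w \le a^{ij} D_{ij} w + b^i D_i w + c\, w$ with bounded coefficients, and $w(\cdot,0) \equiv 0$; the parabolic maximum principle on the compact manifold $S^2 \times [0,\tau]$ then forces $w \equiv 0$, i.e. $\rho^{(1)} \equiv \rho^{(2)}$, hence $F^{(1)} \equiv F^{(2)}$ up to reparametrization. Since $\tau < T$ was arbitrary, uniqueness holds on all of $[0,T)$.

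\textbf{Main obstacle.} The delicate point is \emph{regularity}: the limit solutions are a priori only $C^{1,1}$ in space (the principal curvatures exist merely almost everywhere), so the mean-value-theorem step producing the linear equation for $w$ is not immediately justified in the classical sense. There are two ways around this, either of which I would carry out. One option is to first upgrade regularity: on any region where $G > 0$ the equation is genuinely parabolic and Krylov--Safonov plus Schauder theory (as used for the $\e$-flow in Section~\ref{section-ltee}) gives interior smoothness, while near $\{G = 0\}$ one uses that $H \ge \delta$ keeps $\kappa = G/H$ bounded and Lipschitz in $A$, so that the full equation is uniformly parabolic with merely bounded measurable coefficients, and Krylov--Safonov-type estimates still yield enough regularity (a $C^{1,\alpha}$ or $C^{2,\alpha}$ bound on the scalar graph function) for the comparison argument to close. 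The cleaner option, and the one I would present, is simply to invoke the comparison principle for $C^{2,1}$ solutions of (\ref{hmcf0}) that the authors have already asserted holds (it is used verbatim at the end of the proof of Theorem~\ref{thm-STE}): applied to the two limit solutions, which both lie in the class $\{H \ge \delta,\ |A|\le C\}$ where the comparison principle is valid, it gives $F^{(1)} = F^{(2)}$ directly. Either way, the heart of the matter is that the lower bound $H \ge \delta$ from (\ref{eqn-delta}) removes the degeneracy of the HMCF along the solutions under consideration and reduces the claim to a standard uniqueness statement for uniformly parabolic concave fully nonlinear equations.
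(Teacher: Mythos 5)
There is a genuine gap in your argument, and it sits exactly at the point you flag as the main obstacle, but the difficulty is worse than you acknowledge. You assert that the lower bound $H \ge \delta$ together with $|A|\le C$ makes $\kappa = G/H$ a \emph{uniformly elliptic} function of the second fundamental form; this is false. In eigenvalue coordinates the linearization is $\dot\kappa = \mathrm{diag}\left(\lambda_2^2/H^2,\ \lambda_1^2/H^2\right)$, and the first entry vanishes whenever $\lambda_2 = 0$. Nothing in $H\ge\delta$, $|A|\le C$, or even the pinching estimate $\lambda_1 \le C_1\lambda_2 + C_2$ rules out $\lambda_2 = 0$ (one can have $\lambda_2 = 0$, $\delta \le \lambda_1 \le C_2$). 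So the flow remains \emph{degenerate} parabolic along the solutions under consideration, not uniformly parabolic. This undercuts both of your proposed repairs: Krylov--Safonov requires uniform ellipticity and does not apply near $\{G=0\}$, and the mean-value-theorem linearization produces a linear equation with a possibly vanishing leading coefficient, for which the classical maximum principle does not close. Your ``cleaner option'' of invoking a $C^{2,1}$ comparison principle has the further problem that the limits constructed in Section~\ref{section-lte} are only shown to be $C^{1,1}$, so they do not a priori lie in the class to which that comparison principle is asserted to apply.

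The paper's actual route is designed to handle precisely this degeneracy: it extends the speed $\mathcal{F}$ to a well-defined (piecewise) function of $h_{ij}$ on all of symmetric-matrix space, passes to the level-set formulation $u_t + \mathcal{F}_1(t,\nabla u,\nabla^2 u)=0$, checks that this is a degenerate parabolic geometric equation in the sense of Definition 5.1 of Chen--Giga--Goto~\cite{CGG}, and then invokes their Theorem 7.1 on uniqueness of \emph{viscosity} solutions. The pinching bounds and $H\ge\delta$ enter only to guarantee that along the constructed solution one stays in the branch where $\mathcal{F}=\kappa$, so that the $C^{1,1}$ limit is indeed a viscosity solution of the level-set equation. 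Viscosity-solution uniqueness is exactly what is needed when the equation is degenerate and the solution has limited regularity; a classical comparison argument of the type you propose cannot substitute for it without first proving higher regularity, and the ellipticity degeneration blocks that.
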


\begin{proof}
Consider the evolution of a surface $\Sigma_t$ by a fully-nonlinear  equation of the form 
\begin{equation}
\label{eq-F}
\frac{\partial F}{\partial t}  = - \mathcal{F}(h_{ij})\, \nu
\end{equation}
where $h_{ij}$ is the second fundamental form and $\mathcal{F}$ is a function of
the eigenvalues of $\{h_{ij}\}$,  which we denote by $\lambda_1,\lambda_2$ 
and assume that $\lambda_1 \geq \lambda_2$.  Let $\mu = \lambda_2/{\lambda_1}$ and 
take 
\begin{equation}
\mathcal{F}(\lambda_2,\mu) =
\begin{cases}
\frac{\lambda_1\lambda_2}{\lambda_1+\lambda_2} =
 \frac{\lambda_2}{1+\mu}, & \text{for $\mu\ge -\delta_1$} \\
\frac{\lambda_2}{1-\delta_1}, & \text{otherwise}
 \end{cases}
 \end{equation}
 which we can be written as
 \begin{equation}
 \label{eq-other-form}
\mathcal{F}(h_{ij}) =
\begin{cases}
\kappa, & \text{for $Hg_{ij} \ge (1-\delta_1)h_{ij}$} \\
\frac{H + \sqrt{H^2 - 4G}}{1-\delta_1}, & \text{otherwise.}
\end{cases}
\end{equation}
We can also consider  solutions of  (\ref{hmcf0}) in the viscosity sense (defined in \cite{CGG} and \cite{ES}). In that case  (\ref{eq-F}) can be written in the form
\begin{equation}
\label{eq-visc}
u_t  =
\begin{cases}
\frac{\det(D_i(\frac{D_j u}{|Du|}))}{\dv(\frac{Du}{|Du|})}, \,\,\, \text{when $\dv(\frac{Du}{|Du|})\delta_{ij} \ge (1-\delta_1)D_i(\frac{D_j u}{|Du|})$} \\
\frac{\dv(\frac{Du}{|Du|}) + \sqrt{\dv(\frac{Du}{|Du|}) - 4\det(D_i(\frac{D_ju}{|Du|}))}}{1-\delta_1} \qquad  \text{otherwise}.
\end{cases}
\end{equation}
Equation (\ref{eq-visc}) can be expressed as 
\begin{equation}
\label{eq-visc1}
u_t + \mathcal{F}_1(t,\nabla u, \nabla^2 u) = 0,
\end{equation}
with
$$\mathcal{F}_1(t,p,X) = -\frac{|p|\det(X - \frac{p}{|p|}\otimes (X\cdot \frac{p}{|p|})}{\tr((I - \frac{p}{|p|}\otimes \frac{p}{|p|})\cdot X)}$$
if
$$ I \cdot  \tr((I - \frac{p}{|p|}\otimes \frac{p}{|p|})\cdot X)  \ge (1-\delta_1)(X - X\cdot \frac{p}{|p|}\otimes \frac{p}{|p|})$$
and

\begin{equation*}
\begin{split}
\mathcal{F}_1(t,p,X) =
&\frac{1}{1-\delta_1}\left (\frac{1}{|p|}(\tr((\delta_{ij} - \frac{p}{|p|}\otimes \frac{p}{|p|})X \right ) \\
\\ &+ 
\sqrt{\frac{1}{|p|^2}[(\tr((\delta_{ij} - \frac{p}{|p|}\otimes \frac{p}{|p|})X))^2 - \frac{4}{|p|}\det(X - \frac{(X\cdot p)}{|p|}\otimes\frac{p}{|p|})})
\end{split}
\end{equation*}
otherwise. 

Notice that the lower bound \eqref{eqn-delta}  together with our curvature pinching estimates (that follow from the Proposition \ref{lem-pinch-e}) imply that 
$$H\, g_{ij} \ge (1-\delta_1)h_{ij}$$
 for some $1 >\delta_1 > 0$. This implies that we can view a solution to (\ref{eq-our-sol}) as a solution to (\ref{eq-F}) with $F$ as in (\ref{eq-other-form}). The function $\mathcal{F}_1(t,p,X)$ is continuous on $(0,T) \times \mathbb{R}^2\backslash \{0\} \times S^{2\times 2}$, it satisfies the conditions of Theorem $7.1$ in \cite{CGG}  and (\ref{eq-visc1}) is a degenerate parabolic geometric equation in the  sense of Definition $5.1$ in \cite{CGG}. Theorem $7.1$ in \cite{CGG} shows the uniqueness of viscosity solutions to  (\ref{eq-visc1}). The $C^{1,1}$ solution on $[0,T)$ constructed above is a viscosity solution to (\ref{eq-visc1}) and by the uniqueness result it is the  unique $C^{1,1}$ solution to (\ref{eq-our-sol}). This means that the limiting  solution of  (\ref{eq-our-sol}) does not depend on the sequence $\e_i \to 0$. 
 \end{proof}


\section{Radial case}

In this section we will employ the results from the previous section to completely describe
the long time behaviour of (\ref{hmcf0}) in the case of surfaces of revolution, $r = f(x,t)$
around the $x$-axis. For such a surface of revolution the two principal curvatures are 
given by
\begin{equation}
\label{eq-princ-cur}
\lambda_1 = \frac{1}{f(1+f_x^2)^{\frac{1}{2}}} \qquad \mbox{and} \qquad 
\lambda_2 = -\frac{f_{xx}}{(1+f_x^2)^{\frac{3}{2}}}.
\end{equation}
Therefore,
$$H = \lambda_1 + \lambda_2 = \frac{-ff_{xx} + f_x^2 + 1}{f(1+f_x^2)^{\frac{3}{2}}} > 0$$
and
$$G = \lambda_1\, \lambda_2 = \frac{-f_{xx}}{f(1+f_x^2)^2}.$$
When the surface evolves by (\ref{hmcf0}), $f(x,t)$ evolves by
\begin{equation}
\label{eq-hmcf-rad}
f_t = \frac{f_{xx}}{-ff_{xx} + f_x^2 + 1}.
\end{equation} 
We will consider solutions $f(\cdot,t)$ on an interval  $I_t = [a_t,b_t] \subset [0,1]$ such that $f(a_t,t) = f(b_t,t) = 0$,
$f > 0$ and $\tilde{H} = -ff_{xx} + f_x^2 + 1 > 0$. From (\ref{eq-princ-cur}) we see that
$\lambda_1 > 0$ and $\lambda_2$ changes its sign,  depending on the convexity of $f$.
The linearization of (\ref{eq-hmcf-rad}) around a point $f$ is
\begin{equation}
\label{eq-linear-rad}
\tilde{f}_t = \frac{1+f_x^2}{\tilde{H}^2}\, \tilde{f}_{xx} - \frac{2f_xf_{xx}}{\tilde{H}^2}\, \tilde{f}_x
+ \frac{f_{xx}^2}{\tilde{H}^2}\tilde{f}
\end{equation}
which is uniformly parabolic  when $\tilde{H}$ is away from zero, no matter what is the sign of 
the smaller eigenvalue $\lambda_2$. 

\begin{thm}
\label{thm-radial}
Assume that at time $t = 0$, $\Sigma_0$ is a $C^{1,1}$ star-shaped surface of revolution $r = f(x,0)$,
for $x\in [0,1]$,  $f(0,0) = f(1,0) = 0$, $f(\cdot,0) > 0$ and $H > 0$. Then, the flow exists
up to the maximal  time 
$$T= \frac {\mu_0(\Sigma_0)}{4\pi}$$
when the surface $\Sigma_t$ contracts  to a point. Moreover, the surface becomes 
strictly convex at time $t_1 < T$ and  asymptotically spherical at its extinction time $T$.

\end{thm}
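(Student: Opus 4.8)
The plan is to combine the general machinery of the previous sections (short time existence, the long time existence Theorem \ref{thm-e}, the curvature pinching estimates of Proposition \ref{lem-pinch-e}, and the passage to the limit $\e\to0$ of Section \ref{section-lte}) with the one extra ingredient available in the rotationally symmetric setting: the equation \eqref{eq-hmcf-rad} for $f$, which is quasilinear and uniformly parabolic as long as $\tilde H=-ff_{xx}+f_x^2+1$ stays away from zero. The crux is that in the radial case we can rule out case (i) of Theorem \ref{thm-e}, i.e.\ we can show $H$ cannot go to zero before the surface shrinks to a point. Once that is established, Theorem \ref{thm-e} and Proposition \ref{prop-uniform-e} give a solution of \eqref{equation-HMCF} on $[0,T)$ with $T=\mu_0(\Sigma_0)/4\pi$, contracting to a point; the remaining assertions — strict convexity from some time $t_1<T$ onward, and asymptotic roundness at $T$ — then follow from a blow-up argument of the same flavor as in the proof of Theorem \ref{thm-e}.

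First I would set up the radial picture carefully. The surface is a graph $r=f(x,t)$ over a shrinking interval $[a_t,b_t]$ with $f>0$ in the interior and $f=0$ at the endpoints; by star-shapedness and $H>0$ the profile is well-controlled near the boundary, and \eqref{eq-princ-cur} shows $\lambda_1=1/(f(1+f_x^2)^{1/2})>0$ always, while only $\lambda_2=-f_{xx}/(1+f_x^2)^{3/2}$ can be negative. Thus the "bad" region where the second fundamental form could blow up is where $f$ is convex ($f_{xx}\le0$), i.e.\ $\lambda_2\ge0$ and both curvatures positive, exactly as in the Remark after Lemma \ref{lem-smaller-eigen}. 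The first substantive step is a lower bound for $\tilde H$, equivalently for $H$ (since $H=\tilde H/(f(1+f_x^2)^{3/2})$ and $f,f_x$ are bounded away from blow-up on compact time intervals by the pinching and the bound $|F_\e|\le C$): I would run the argument of Theorem \ref{thm-STE} / Proposition \ref{prop-nabla-curv} in the rotationally symmetric coordinates, where \eqref{eq-der} collapses to a scalar equation and the "good" negative gradient term $-2\dot\kappa_\e(\nabla^2 h,\nabla^2 h)$ can be used exactly as before to dominate the error terms; the conclusion is that $H\ge\delta(t)>0$ on $[0,\tau]$ for any $\tau<T$ as long as the surface has not yet contracted. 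This says case (i) of Theorem \ref{thm-e} is excluded in the radial case, so we are in case (ii): the $\e$-flow, and hence (by Section \ref{section-lte}) the limiting flow \eqref{equation-HMCF}, exists up to $T=\mu_0(\Sigma_0)/4\pi$ and the surface shrinks to a point.

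For the qualitative conclusions I would argue as follows. \emph{Strict convexity for $t\ge t_1$}: since the surface shrinks to a point at $T$, its diameter $\to0$, so near $T$ the rescaled surfaces at the natural blow-up scale converge, by Proposition \ref{lem-pinch-e} (which gives $\lambda_1\le C_1\lambda_2+C_2$, i.e.\ asymptotic convexity after rescaling) and the curvature bounds, to a compact strictly convex limit; tracking this back, there is $t_1<T$ after which $\lambda_2>0$ everywhere on $\Sigma_t$, so $\Sigma_t$ is strictly convex. Once $\Sigma_{t_1}$ is strictly convex, for $t>t_1$ the flow \eqref{equation-HMCF} is honestly the harmonic mean curvature flow of a convex surface, and by Andrews \cite{An3} it contracts to a round point; this gives the asymptotically spherical behavior at $T$. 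Alternatively, and perhaps more self-contained, one carries out the blow-up at $T$ directly as in the proof of Theorem \ref{thm-e}: the rescaled limit $\tilde\Sigma_0$ is complete, weakly convex, non-flat, $\eta$-pinched in the sense of \eqref{eq-pinch-est0}, the mean curvature flow out of it is strictly convex and, by Hamilton \cite{Ha2}, compact, forcing $\Sigma_t$ to be asymptotic to a shrinking sphere.

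The main obstacle is the lower bound on $H$ (equivalently $\tilde H$) in the radial case, carried out \emph{uniformly up to the contraction time}, not merely for a short time as in Section \ref{section-ste}. The short-time argument of Proposition \ref{prop-nabla-curv} gives $H\ge\delta$ only on $[0,\min(T_\e,T_0))$; to reach $T$ one needs to combine it with the monotonicity quantity $\mathcal F_\e=\langle F_\e,\nu\rangle+2t\,\kappa_\e$ and the pinching estimates of Proposition \ref{lem-pinch-e}, exactly as in \eqref{eq-better-H} and Lemmas \ref{lem-bigger-eigen}--\ref{lem-smaller-eigen}: those bound $\lambda_2$ from below and $\lambda_1$ in terms of $\lambda_2$, which in the radial case — where $\lambda_1>0$ is automatic — is precisely what is needed to prevent $H=\lambda_1+\lambda_2$ from degenerating except where both eigenvalues are large, i.e.\ only in a genuine curvature blow-up, which by Theorem \ref{thm-e} coincides with the surface shrinking to a point. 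Making this exclusion airtight — showing that the only way $H\to0$ is simultaneously with $|A|\to\infty$ and $\operatorname{diam}\to0$, not as an isolated degeneracy on a set of positive measure — is the heart of the matter, and is where the rotational symmetry (which makes the flat region, if any, an honest interval and lets one use the boundary behavior $f=0$, $H>0$ at $a_t,b_t$) does the essential work.
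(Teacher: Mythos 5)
The central difficulty you correctly identify — ruling out case (i) of Theorem \ref{thm-e}, i.e.\ showing $H$ stays bounded away from zero on all of $[0,T)$ — is not actually resolved by the ingredients you invoke. You say that the pinching estimates of Proposition \ref{lem-pinch-e} ($\lambda_2\ge -C_1$ and $\lambda_1\le C_1\lambda_2+C_2$), together with the fact that $\lambda_1>0$ automatically in the radial case, ``prevent $H=\lambda_1+\lambda_2$ from degenerating except where both eigenvalues are large.'' That is false: $\lambda_1=1/10$, $\lambda_2=-1/10+\eps$ satisfies both pinching inequalities and $\lambda_1>0$, yet $H=\eps\to0$ with $|A|$ bounded. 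The pinching estimates only cap $\lambda_1$ from above in terms of $\lambda_2$ and cap $-\lambda_2$ from above; they give no lower bound on $\lambda_1$, which is exactly what is needed. The paper supplies the missing ingredient with a genuinely radial argument you do not reproduce: Lemma \ref{lem-ff} establishes $f^2f_x^2\le C$ by a direct maximum principle computation on the evolution of $f^2f_x^2$ (separately treating an interior maximum, where $(f^2f_x^2)_x=0$ gives a favorable sign, and a maximum at the tips, where one switches to the transverse graph $x=g(y,t)$), and Corollary \ref{lem-lower-H0} then reads off $\lambda_1=1/(f\sqrt{1+f_x^2})\ge c>0$. Only with this lower bound on $\lambda_1$ does the monotonicity bound $\kappa_\e\ge-C$ (Lemma \ref{lem-bound-F}) force $H\ge\delta>0$: if $\lambda_2<0$ and $H$ were small, then $|\lambda_2|$ would be close to $\lambda_1\ge c$, so $\lambda_1|\lambda_2|/H$ would blow up, contradicting $\kappa_\e\ge -C$. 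Your appeal to running Proposition \ref{prop-nabla-curv} ``in radial coordinates'' is also only a short-time statement and does not bridge this gap.

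Your route to strict convexity before $T$ is likewise softer than the paper's. You propose a blow-up argument at $T$ to get convexity ``near $T$'' and then to track back; the paper instead proves a clean quantitative statement (Lemma \ref{lem-lower}): at any point where $f_{xx}\ge0$ (the non-convex region), necessarily $f\ge c$ for a uniform $c>0$, because there $\lambda_2\le0$ forces $\lambda_1\le C$ by Lemma \ref{lem-bigger-eigen}, hence $f\ge 1/(C\sqrt{1+f_x^2})$, and a maximum principle on $f_x$ in the non-convex part keeps $|f_x|$ bounded there. Since the surface shrinks to a point, $\sup f\to 0$, so once $\sup f<c/2$ the non-convex region is empty, and with $H\ge\delta$ the surface is strictly convex for $t\ge t_1$. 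This gives a concrete time $t_1<T$ without any extraction-of-subsequence/blow-up compactness issue. You should supply an argument of comparable strength — either Lemma \ref{lem-lower} or something equivalent — rather than relying on asymptotic blow-up information alone, which by itself does not yield convexity at a fixed $t_1<T$.
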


Since the equation is strictly parabolic when $\tilde H >0$, the short time existence of a smooth solution on some time interval $[0,\tau]$, 
follows by classical results.  Having a smooth
solution to (\ref{hmcf0}) on $[0,\tau]$  implies that we have  a smooth solution
$f(\cdot,t)$ to (\ref{eq-hmcf-rad}). By the comparison principle, $f(x,t)$ is
defined on $I_t = [a_t,b_t] \subset [0,1]$ and $f(a_t,t) = f(b_t,t)
= 0$. Since the surface is smooth and  $H >0$ on $[0,\tau]$, the   
expressions for $\lambda_1$ and $\lambda_2$  in (\ref{eq-princ-cur}) yield to the bounds
$$\limsup_{x\to a_t} f\,  |f_x| \le C_1(t) \quad \mbox{and} \quad  \limsup_{x\to
b_t}f\, |f_x| \le C_2(t), \quad \mbox{for}\,\, 0 \leq t \leq \tau.$$   
In the next lemma we will show that the above bounds do not depend on the lower
bound on $H$, but only on the initial data. 
%

\begin{lem}\label{lem-ff}
Assume that the  solution $f$ is smooth on $[0,t_0)$,  for some $t_0 \leq T$ and $H >0$
on $[0,t_0)$. Then, there exists a uniform constant $C$, depending only on initial data, so that
\begin{equation}
\label{eq-upper-ffx}
f^2f_x^2  \le C, \qquad \mbox{for all} \,\, t \in [0,t_0).
\end{equation}  
\end{lem}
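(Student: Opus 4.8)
The quantity to control is $f^2 f_x^2$, which is exactly $1/\lambda_1^2$ up to the factor $1+f_x^2$; indeed from \eqref{eq-princ-cur}, $\lambda_1 = 1/(f(1+f_x^2)^{1/2})$, so $f^2 f_x^2 = \frac{f_x^2}{(1+f_x^2)\lambda_1^2}$. A bound on $f^2 f_x^2$ is therefore essentially a lower bound on $\lambda_1$ together with the geometric information that the surface is a graph over the $x$-axis. The natural approach is to derive the evolution equation for $w := f^2 f_x^2$ (or a close variant such as $f^2 f_x^2/(1+f_x^2) = f^2 - f^2/(1+f_x^2)$, or the cleaner $v := (f f_x)^2$) under \eqref{eq-hmcf-rad}, and to apply the maximum principle on the (moving) interval $I_t = [a_t,b_t]$. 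The key structural facts I would exploit: $\tilde H = -f f_{xx} + f_x^2 + 1 > 0$ throughout, the linearized operator \eqref{eq-linear-rad} is parabolic with principal coefficient $(1+f_x^2)/\tilde H^2$, and $\lambda_1 > 0$ automatically because $f>0$.

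\textbf{Key steps.} First I would compute $\partial_t (f f_x)$. Since $f_t = f_{xx}/\tilde H$ with $\tilde H = 1 + f_x^2 - f f_{xx}$, one gets $\partial_t f_x = \partial_x(f_t)$ and hence an evolution equation for $p := f f_x$ of the form $p_t = \frac{1+f_x^2}{\tilde H^2} p_{xx} + (\text{lower order})$, where the lower-order terms should, after using $p = f f_x$ and $\tilde H > 0$, organize into a form with no positive zeroth-order coefficient in $p^2$ — or at worst a coefficient controllable by the initial data. Then I would examine the behavior at the endpoints $a_t, b_t$: there $f = 0$, so $p = f f_x = 0$ at the boundary provided $f_x$ stays bounded near the endpoints, which follows from smoothness of $\Sigma_t$ on $[0,t_0)$ and the $\limsup$ bounds on $f|f_x|$ already recorded in the excerpt just before the lemma. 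With $p \equiv 0$ on the lateral boundary and a favorable sign in the evolution equation, the maximum principle would give $\max_{I_t} p^2 \le \max_{I_0} p^2 =: C$, which is \eqref{eq-upper-ffx}. If the evolution equation for $p$ is not quite sign-favorable, I would instead work with $w = p^2/(1+f_x^2)$ or add a controlled exponential-in-time correction, using $\tilde H > 0$ to kill the bad term; the star-shapedness and $H>0$ hypotheses enter precisely to guarantee $\tilde H > 0$ and that the surface remains a graph so that the endpoint analysis is valid.

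\textbf{Main obstacle.} The hard part will be the bookkeeping in the evolution equation for $f f_x$: after differentiating $f_t = f_{xx}/\tilde H$ in $x$ and multiplying through, one has to verify that the resulting zeroth- and first-order terms do not produce an uncontrolled positive feedback — that is, that the coefficient of the "bad" quadratic term is either nonpositive or bounded by a constant depending only on the initial data (not on the lower bound for $H$, which is the whole point of the lemma, as emphasized in the text preceding it). A second, more delicate point is justifying the maximum principle on the \emph{moving} domain $I_t = [a_t,b_t]$ with the degeneracy $f \to 0$ at the endpoints: one must check that $p = f f_x \to 0$ there uniformly in $t$ and that no maximum of $p^2$ escapes through the endpoints as $a_t, b_t$ move. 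This is handled by combining the smoothness of the solution on $[0,t_0)$ with the a priori endpoint bounds on $f|f_x|$ stated right before the lemma, so that the interior maximum principle applies and yields the constant $C$ depending only on the initial data.
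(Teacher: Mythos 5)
Your interior argument is in the spirit of the paper's: the paper computes the evolution of $f^2f_x^2$ (rather than of $p=ff_x$), and at an interior maximum uses the first-order condition $(f^2f_x^2)_x=0$, i.e.\ $f_x^3=-ff_xf_{xx}$, to turn the potentially bad term $2ff_{xx}f_x^2/\tilde H^2$ into $-2f_x^4/\tilde H^2\le 0$, so the maximum is nonincreasing. That part of your plan is workable.

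The genuine gap is your treatment of the endpoints. You claim $p=ff_x=0$ at $a_t,b_t$ ``provided $f_x$ stays bounded near the endpoints, which follows from smoothness of $\Sigma_t$.'' This is false: for a smooth closed surface of revolution the profile meets the axis with vertical tangent, so $f_x\to\pm\infty$ at the tips (the paper says exactly this later: ``where $f=0$ and $f_x$ becomes unbounded''). In fact $f|f_x|\to 1/\lambda_1(\mathrm{tip})\neq 0$ there; for the sphere $f=\sqrt{R^2-x^2}$ one has $ff_x=-x$, so $f^2f_x^2$ is maximized \emph{precisely} at the tips. Thus the maximum can sit at the boundary of $I_t$, where your argument gives no control, and the $\limsup$ bounds quoted before the lemma cannot rescue it: they are bounds on the product $f|f_x|$ with constants $C_1(t),C_2(t)$ that depend on the smoothness/lower bound of $H$ at time $t$, which is exactly the dependence the lemma must avoid. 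The paper closes this gap by observing that a tip is an interior point of $\Sigma_t$ near which the surface is convex, rewriting the profile locally as a graph $x=g(y,t)$ over the orthogonal axis so that $ff_x=y/g_y$ and the tip corresponds to the interior point $y=0$, computing the evolution of $y^2/g_y^2$ in these coordinates, and applying the interior maximum principle there (again obtaining a nonpositive right-hand side at the maximum). Without this change of coordinates (or some equivalent device) your proof does not establish \eqref{eq-upper-ffx}.
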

\begin{proof}
We will bound $f^2 \,  f_x^2$ from above  by the maximum principle.  Let us compute its evolution equation. We first compute the evolution of  $f_x$ by differentiating 
(\ref{eq-hmcf-rad}) in $x$. We   get
\begin{equation}
\label{eq-der-der}
(f_x)_t = \frac{f_{xxx} \, (1+f_x^2) - f_x\,  f_{xx}^2}{\tilde{H}^2},
\end{equation}
which yields the following equation for $f_x^2$:
\begin{eqnarray*}
(f_x^2)_t &=& \frac{2f_{xxx}f_x\, (1+f_x^2) - 2f_x^2\,  f_{xx}^2}{\tilde{H}^2} =
\frac{((f_x^2)_{xx} - 2f_{xx}^2)\, (1+f_x^2) - 2f_x^2f_{xx}^2}{\tilde{H}^2} \\
&=& \frac{(f_x^2)_{xx}\, (1+f_x^2) - 4f_x^2f_{xx}^2 - 2f_{xx}^2}{\tilde{H}^2}.
\end{eqnarray*}
The function $f^2$ satisfies the equation 
$$( f^2)_t  = \frac{(f^2)_{xx} - 2f_x^2}{\tilde{H}^2}.$$
Combining the last  two equations we obtain 

\begin{eqnarray*}
(f^2  f_x^2)_t &=&
\frac{(f_x^2)_{xx}\, (1+f_x^2) - 4f_x^2f_{xx}^2 - 2f_{xx}^2}
{\tilde{H}^2}\, f^2 + 2\, \frac{f_{xx} f f_x^2}{\tilde{H}^2}\\
&=&\frac{(f^2f_x^2)_{xx}\,(1+f_x^2)}{\tilde{H}^2} - \frac{(1+
f_x^2)((f^2)_{xx}\,  f_x^2 - 2(f_x^2)_x\, (f^2)_x)}
{\tilde{H}^2} + 2\, \frac{f_{xx}ff_x^2}{\tilde{H}^2}. 
\end{eqnarray*} 
Let $t < t_0$. We distinguish the following two cases:

\noindent{\em Case 1.} The $(f^2f_x^2)_{\max}(t)$ is attained in the interior of $(a_t,b_t)$. Then,
 at that point $(f^2f_x^2)_x = 0$, which implies (since
$f(\cdot,t) > 0$ in the interior) that
\begin{equation}
\label{eq-rel}
f_x^3 = -f f_xf_{xx}.
\end{equation}
Hence, the maximum principle implies the differential inequality
\begin{eqnarray}
\label{eq-subst1}
\frac{d}{dt}(f^2f_x^2)_{\max}(t) &\le& -\frac{1+f_x^2}{\tilde{H}^2}\left ((f^2)_{xx}f_x^2 - 2(f_x^2)_x(f^2)_x \right) 
+ 2\, \frac{f_{xx}ff_x^2}{\tilde{H}^2} \nonumber \\
&=&  -8\, \frac{(1+f_x^2)f_x^4}{\tilde{H}^4} - 2\frac{f_x^4}{\tilde{H}^2} \le 0.
\end{eqnarray} 

\smallskip

\noindent{\em Case 2.} The  $(f^2f_x^2)_{\max}(t)$ is attained at
one of the tips $\{a_t, b_t\}$. Assume it is attained at $a_t$. The
point of the surface $\Sigma_t$ that arises from $x=a_t$ can be viewed
as the interior point of $\Sigma_t$ around which our surface is
convex. We can solve locally, around the point $x = a_t$ (say for
$x\in [a_t,x_t]$) the equation $y = f(x,t)$ with respect to $x$,
yielding to the map $x = g(y,t)$. Notice that $f f_x ={y}/{g_y}$
and that $x = a_t$ corresponds to $y = 0$. Since $\{f(x,t)|x\in
[a_t,x_t]\}\cup \{-f(x,t)|x\in [a_t,x_t]\}$ is a smooth curve, we have
that $x = g(y,t)$ is a smooth graph for $y\in [-f(x_t,t),f(x_t,t)]$.
If $f^2f_x^2(\cdot,t)$ attains its maximum somewhere in $[a_t,x_t)$,
then ${y^2}/{g_y^2}$ attains its maximum in the interior of
$(-f(x_t,t),f(x_t,t))$. 

We will now compute the evolution of $y^2/g_y^2$ from the evolution of $f^2\, f_x^2$.
Since
$$f_x(x,t) = \frac{1}{g_y(y,t)}$$ from  the evolution of $f^2\, f_x^2$ we get
$$
\left ( \frac{y^2}{g_y^2} \right )_t = 
\frac{(1+g_y^2)}{g_y^2\tilde{H}^2} \left (\frac{y^2}{g_y^2} \right )_{xx} -
\frac{(1+g_y^2)}{g_y^2\tilde{H}^2} \left ((y^2)_{xx}\, \frac{1}{g_y^2} \
- 2 \left (\frac{1}{g_y^2}\right )_x(y^2)_x \right ) + \frac{2\, y_{xx}y}{g_y^2\tilde{H}^2}.
$$
By direct computation we have
$$\left (\frac{1}{g_y^2} \right )_x = -\frac{2g_{yy}}{g_y^3} \quad \mbox{and} \quad 
(y^2)_x = \frac{2y}{g_y} \quad \mbox{and} \quad  y_{xx} = -\frac{g_{yy}}{g_y^3}$$
and
$$\left (\frac{y^2}{g_y^2} \right )_{xx} = \left (\frac{y^2}{g_y^2} \right )_{yy} - \left (\frac{y^2}{g_y^2} \right )_y \,  \frac{g_{yy}}{g_y^3} \qquad \mbox{and} \qquad  (y^2)_{xx} = \frac{2}{g_y^2} - \frac{2yg_{yy}}{g_y^3}.$$\\
Combining the above yields to 
\begin{eqnarray*}
\left (\frac{y^2}{g_y^2} \right)_t &=&
\frac{(g_y^2+1)}{g_y^2\tilde{H}^2} \left(\frac{y^2}{g_y^2} \right )_{yy} -
\left (\frac{y^2}{g_y^2}\right )_y\,\frac{g_{yy}\, (1+g_y^2)}{g_y^5\tilde{H}^2}\\
&-& \frac{(1+g_y^2)}{g_y^2\tilde{H}^2}\left (\frac{2}{g_y^4} + 2\frac{yg_{yy}}{g_y^5} \right)
- \frac{2\, yg_{yy}}{g_y^5\tilde{H}^2}
\end{eqnarray*}
which can be re-written  it as
\begin{eqnarray}
\label{eq-g1}
\left (\frac{y^2}{g_y^2} \right)_t &=&
\frac{(g_y^2+1)}{g_y^2\tilde{H}^2}\, \left (\frac{y^2}{g_y^2} \right )_{yy} -
\left (\frac{y^2}{g_y^2} \right )_y\, \frac{g_{yy}(1+g_y^2)}{g_y^2\tilde{H}^5}\nonumber\\
&-& \frac{(1+g_y^2)}{g_y^2\tilde{H}^2}\left(\frac{2}{g_y^4} + \frac{2\, y^2g_yg_{yy}}{yg_y^6} \right) - \frac{2\, y^2g_yg_{yy}}{yg_y^6\tilde{H}^2}.
\end{eqnarray}
At the maximum point of ${y^2}/{g^2_y}$ we have 
$$y^2g_yg_{yy} = yg_y^2.$$
This together with the maximum principle applied to (\ref{eq-g1}) yield to the differential inequality
\begin{equation}
\label{eq-subst2}
\frac{d}{dt}\left (\frac{y^2}{g_y^2} \right )_{\max}(t) \le -\frac{4\, (1+g_y^2)}{g_y^6\tilde{H}^2}
 - \frac{2}{\tilde{H}^2g_y^4} \le 0. 
\end{equation}

Estimates (\ref{eq-subst1}) and (\ref{eq-subst2}) imply that $(f^2f_x^2)(x,t) \le C$,
for all $x\in [a_t,b_t]$ and all $t \leq t_0$,  where $C$ is a uniform
constant independent of time. This finishes the proof of the lemma. 
\end{proof}

\begin{cor}
\label{lem-lower-H0}
Let $T= {\mu_0(\Sigma_0)}/{4\pi}$ be as in Theorem \ref{thm-radial}.  Then, there exists a uniform constant $\delta$, depending only on the initial data, so that
$H \ge \delta > 0$, for all $t\in [0,T)$.
\end{cor}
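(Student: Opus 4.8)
The plan is to derive the lower bound $H \ge \delta$ from the bound $f^2 f_x^2 \le C$ of Lemma \ref{lem-ff}, the upper area bound $\mu_t(\Sigma_t) \le \mu_0(\Sigma_0)$ from \eqref{eqn-vol}, and the monotonicity quantity $\mathcal F_\e$ from the curvature pinching section, all of which hold uniformly. Recall that in the radial setting $\lambda_1 = 1/(f(1+f_x^2)^{1/2}) > 0$ automatically, so $H = \lambda_1 + \lambda_2$ can only approach zero where $\lambda_2 = -f_{xx}/(1+f_x^2)^{3/2}$ is very negative, i.e. where the profile $f$ is strongly concave. Since the surface is star-shaped and the comparison argument of Lemma \ref{lem-bound-F} (together with the monotonicity of $\mathcal F_\e$ in Lemma \ref{lemma-monotone}) gives a uniform lower bound $\kappa = G/H \ge -C$ valid for the limiting flow, we have $\lambda_1 |\lambda_2| \le C H$ wherever $\lambda_2 < 0$. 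Combined with the explicit formula $\lambda_1 = 1/(f(1+f_x^2)^{1/2})$, this reads
$$
|\lambda_2| \le C\, H\, f\, (1+f_x^2)^{1/2}.
$$

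First I would use Lemma \ref{lem-ff} to control the geometric factors: $f^2 f_x^2 \le C$ gives $f |f_x| \le C$, and since $f$ itself is bounded above (the surface stays inside a fixed sphere, by Lemma \ref{lem-bound-F}) we get $f(1+f_x^2)^{1/2} \le f + f|f_x| \le C$. Plugging into the displayed inequality yields $|\lambda_2| \le C H$, hence
$$
H = \lambda_1 + \lambda_2 \ge \lambda_1 - |\lambda_2| \ge \lambda_1 - C H,
$$
so $H \ge \lambda_1/(1+C) = \frac{1}{(1+C)\, f\, (1+f_x^2)^{1/2}}$. Thus a lower bound on $H$ follows from an \emph{upper} bound on $f(1+f_x^2)^{1/2}$, which we already have. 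More precisely, $H \ge c / \big(f(1+f_x^2)^{1/2}\big) \ge c/C =: \delta > 0$, with $\delta$ depending only on the initial data through the constants in Lemma \ref{lem-ff}, Lemma \ref{lem-bound-F} and the uniform bound $\kappa \ge -C$.

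The step I expect to be the main obstacle is justifying that the uniform lower bound $\kappa = G/H \ge -C$ (equivalently $\lambda_1|\lambda_2| \le C H$) is legitimately available here \emph{before} the lower bound on $H$ has been established — i.e., avoiding circularity. The resolution is that the bound on $\kappa$ from the monotonicity quantity $\mathcal F_\e = \langle F_\e,\nu\rangle + 2t\kappa_\e$ in Lemma \ref{lemma-monotone} and Lemma \ref{lem-bound-F} only uses star-shapedness and the containment inside a fixed sphere, both of which are preserved for as long as the smooth radial solution exists and $\tilde H > 0$; it does \emph{not} require an a priori lower bound on $H$. One must be slightly careful near the tips $x = a_t, b_t$, where $f \to 0$ and the change of variables $x = g(y,t)$ from the proof of Lemma \ref{lem-ff} is the right tool — there $\lambda_1 \to \infty$, so $H \to \infty$ and the desired bound is trivially satisfied, while in the interior away from the tips $f$ is bounded below and the argument above applies directly. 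Assembling these two regimes gives the uniform $\delta$ on all of $[0,T)$.
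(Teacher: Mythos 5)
Your main argument is correct and rests on the same two ingredients as the paper's proof: the lower bound $\lambda_1 \ge c > 0$, which follows from $f(1+f_x^2)^{1/2} \le C$ (itself a consequence of Lemma~\ref{lem-ff} and the uniform bound $f \le C$ from Lemma~\ref{lem-bound-F}), and the lower bound $\kappa = G/H \ge -C$ from Lemma~\ref{lem-bound-F}, which indeed requires only star-shapedness plus enclosure in a fixed sphere and no a~priori lower bound on $H$. The only difference from the paper is the algebraic packaging: the paper runs a case analysis (first $\lambda_2 \ge 0$, then $\lambda_2 < 0$ split further by whether $H \le c/2$, producing $\delta = \min\{c/2, c^2/2C\}$), whereas you absorb $|\lambda_2| \le C'H$ into the single linear inequality $H \ge \lambda_1 - C'H$ to get $\delta = c/(1+C')$ directly. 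Your route avoids the dichotomy and is arguably cleaner; the two are otherwise equivalent.

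One small correction to your side discussion about the tips: the claim that $\lambda_1 \to \infty$ as $x \to a_t$ or $b_t$ is not true in general. Near a tip, $f \to 0$ but $f|f_x|$ stays bounded away from zero (for a sphere of radius $r$, $f(1+f_x^2)^{1/2} \to r$, so $\lambda_1 \to 1/r$, finite). Fortunately this does not affect your proof: the two uniform bounds above hold on the entire profile including the tips, so the main argument needs no separate treatment of the tip regions, and the $x = g(y,t)$ change of variables is not required here (it was only needed inside the proof of Lemma~\ref{lem-ff}, which you are invoking as a black box).
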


\begin{proof}
It is enough to show that if $H >0$ on $[0,t_0)$, then $H \geq \delta >0$ there. 
We recall that $\lambda_1 = {1}/{f(1+f_x^2)^{{1}/{2}}}$. Hence, the estimate (\ref{eq-upper-ffx}) yields to  the bound 
\begin{equation}
\label{eq-lam1}
\lambda_1 \ge c >0  \qquad \mbox{on} \,\, \Sigma_{t}, \quad \mbox{for}
\,\, t\in [0,t_0).
\end{equation}
Since $H = \lambda_1 + \lambda_2$, if $\lambda_2 \ge 0$, then $H \ge \lambda_1 \ge c$.
If $\lambda_2 < 0$ and  $H \le c/{2}$ (otherwise we are done)  by (\ref{eq-lam1}) we have 
$$\lambda_1 - |\lambda_2| \le \frac c2 \Rightarrow |\lambda_2| \ge \frac{c}{2}.$$
Observe next that Lemma \ref{lem-bound-F}  implies the  bound 
$$\frac{\lambda_1|\lambda_2|}{H} \le C, \qquad \mbox{for a uniform constant} \,\, C.$$
Hence 
$$H \ge \frac{\lambda_1|\lambda_2|}{C} \ge \frac{c^2}{2C}.$$
In any case, we have
$$H \ge \min \left \{\frac{c}{2}, \frac{c^2}{2C} \right \}$$
which shows our lemma with $\delta:= \min  \{\frac{c}{2}, \frac{c^2}{2C}  \}$. 
\end{proof}

\begin{lem}
\label{lem-blowing-up}
Let $[0,T)$ be the maximal interval of existence of  a solution to (\ref{hmcf0}). Then,  
$\max_{\Sigma_t}|A|$ becomes unbounded as $t\to T$.  
\end{lem}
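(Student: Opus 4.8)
The plan is to argue by contradiction using the regularizing properties established for the $\e$-flow together with the uniform estimates of the previous sections. Suppose $[0,T)$ is the maximal interval of existence of a smooth solution to \eqref{hmcf0} in the radial setting but that $\limsup_{t\to T}\max_{\Sigma_t}|A| < \infty$, i.e. $|A|\le C$ uniformly on $[0,T)$. By Corollary \ref{lem-lower-H0} we also have $H\ge \delta>0$ on $[0,T)$, so $\tilde H = -ff_{xx}+f_x^2+1$ stays bounded below away from zero, and by Lemma \ref{lem-ff} we have $f^2f_x^2\le C$. Hence the linearized operator \eqref{eq-linear-rad} is uniformly parabolic with bounded coefficients on $[0,T)$, and the bound on $|A|$ gives uniform $C^2$ bounds on the immersion $F(\cdot,t)$.

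Next I would upgrade these to full regularity and pass to the limit $t\to T$. First, since $|\partial F/\partial t| = |\kappa| \le |A|^2/H \le C$, the map $F(\cdot,t)$ is uniformly Lipschitz in $t$, so $F(\cdot,t)\to F(\cdot,T)$ uniformly as $t\to T$, defining a $C^{1,1}$ limiting surface $\Sigma_T$. Because the equation \eqref{hmcf0} (equivalently \eqref{eq-hmcf-rad}) is uniformly parabolic on the region $\tilde H\ge \delta$ and the speed $\kappa$ is a concave function of the second fundamental form, the Krylov--Safonov and Schauder estimates bootstrap the $C^2$ bound to uniform $C^\infty$ bounds on $[0,T)$ (locally in the interior, and up to the tips using the change of variables $x=g(y,t)$ as in the proof of Lemma \ref{lem-ff}). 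Therefore $\Sigma_T$ is a smooth surface of revolution with $H(\cdot,T)\ge \delta>0$ and $\tilde H(\cdot,T)>0$.

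Finally I would invoke short-time existence to extend past $T$ and reach the contradiction. Since $\Sigma_T$ is smooth with $H>0$, the linearization \eqref{eq-linear-rad} around $f(\cdot,T)$ is strictly parabolic, so by the classical theory of strictly parabolic equations (exactly as in the discussion preceding Theorem \ref{thm-radial}, or via Proposition \ref{prop-ste-e} applied to the genuine flow since the Gauss curvature stays controlled here) there is $\tau>0$ and a smooth solution on $[T,T+\tau)$ with $H>0$. Gluing this to the solution on $[0,T)$ produces a smooth solution on $[0,T+\tau)$, contradicting the maximality of $T$. Hence $\max_{\Sigma_t}|A|\to\infty$ as $t\to T$.

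The main obstacle is the behaviour at the tips $a_t,b_t$, where $\lambda_1 = 1/(f(1+f_x^2)^{1/2})$ a priori could blow up as $f\to 0$; controlling this is precisely what Lemma \ref{lem-ff} and Corollary \ref{lem-lower-H0} buy us, giving $\lambda_1\le c^{-1}$ and $H\ge\delta$, so that the parabolicity and the bound on $|A|$ genuinely hold up to the boundary. Once this boundary control is in hand, the continuation argument is routine.
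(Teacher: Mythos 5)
Your proposal follows essentially the same route as the paper: assume $|A|$ stays bounded, use the lower bound $H\ge\delta$ from Corollary~\ref{lem-lower-H0} together with $\lambda_1\ge c$ and $|A|\le C$ to get uniform parabolicity of~\eqref{eq-linear-rad}, extract a Lipschitz-in-time limit $\Sigma_T$, bootstrap to smoothness away from the tips by interior parabolic estimates (and at the tips via the graph change of variables $x=g(y,t)$), and then restart the flow from the smooth surface $\Sigma_T$ to contradict maximality. One small caveat: your intermediate assertion that $\tilde H$ is ``bounded below away from zero'' is true but needs the $|A|$-bound (via $\lambda_1\le C$ giving $f(1+f_x^2)^{1/2}\ge 1/C$) to justify, and by itself it does not yield uniform parabolicity, since the coefficient $(1+f_x^2)/\tilde H^2$ could still degenerate near the tips where $f_x$ blows up; the paper instead derives the two-sided bound $C_1\le(1+f_x^2)/\tilde H\le C_2$ and, like you, handles the tips separately in the $x=g(y,t)$ coordinates.
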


\begin{proof}
Assume that $\sup_{\Sigma_t}|A| \le C$, for 
all $t\in [0,T)$ and write  $$H = \frac{\tilde{H}}{f(1+f_x^2)^{3/2}}$$ with $\tilde H = -ff_{xx} + f_x^2 +1$.  Then $H \leq C$ (since $|A|$ is bounded) and  $H \geq \delta >0$ (by the previous result). Hence, 
$$c_1 \le \frac{f(f_x^2 + 1)^{3/2}}{\tilde{H}} \le c_2$$
which implies
$$ \frac{c_1}{f(1+f_x^2)^{1/2}} \le \frac{1+f_x^2}{\tilde{H}} \le  \frac{c_2}{f(1+f_x^2)^{1/2}}.$$
We can rewrite it as
\begin{equation}
c_1\, \lambda_1 \le \frac{1+f_x^2}{\tilde{H}} \le c_2\, \lambda_1
\end{equation}
which together with (\ref{eq-lam1}) and $|A| \le C$ imply the bounds 
\begin{equation}
\label{eq-unif-ellip}
C_1 \le \frac{1+f_x^2}{\tilde{H}}  \le C_2
\end{equation}
for uniform constants $C_1, C_2$, for all $t\in [0,T)$. This means the linearization
(\ref{eq-linear-rad}) of (\ref{eq-hmcf-rad}) is uniformly elliptic on time interval $[0,T)$.
If our surface of revolution at time $t$ is given by an embedding $F(\Sigma,t)$, which
is a solution to (\ref{hmcf0}), $|A| \le C$ implies $|F|_{C^2} \le C$ on the time interval
$[0,T)$ and the speed $|\kappa| \le C$ (we will use the same symbol $C$ to denote 
different uniform constants).  It is easy to see that $F(\cdot,t)$
converges to a continuous limit $F(\cdot,T)$ as $t\to T$, since
$$|F(x,t_1) - F(x,t_2)| \le \int_{t_1}^{t_2}|\kappa|\, dt \le C|t_1 - t_2|.$$  
Due to 
$$|\frac{\partial}{\partial t}g_{ij}|^2 = |2h_{ij}\kappa|^2 \le 4|A|^2\kappa^2 \le C$$
and \cite{Ha1} we have that $F(\cdot,T)$ represents a surface. It is a
$C^{1,1}$ surface of revolution $r=f(x,T)$ around the $x$-axis that
comes as a limit as $t\to T$ of surfaces of revolution $r=f(x,t)$.
Take $0 < \e << b_T - a_T$ arbitrarily small. Consider $f(r,t)$ on
$x\in [a_t +\e, b_t-\e]$, that is, away from the tips $x = a_t$ and $x
= b_t$ where $f = 0$ and $f_x$ becomes unbounded. Since our solution
is $C^{1,1}$, $c_1 \le f(r,T) \le c_2$ and $|f_x| \le c_3$, at time $t
= T$ and for $x\in [a_T+\e,b_T-\e]$, where $c_1, c_2, c_3$ all depend
on $\e$. Due to (\ref{eq-unif-ellip}), equation (\ref{eq-hmcf-rad}) is
uniformly parabolic and standard parabolic estimates yield
\begin{equation}
\label{eq-away-tips}
|f(\cdot,T)|_{C^k} \le C(\e,k), \qquad \mbox{for every} \,\, k > 0 \,\, \mbox{and} \,\,
x\in [a_T+\e,b_T-\e].
\end{equation} 
We can repeat the previous discussion to every
$\e > 0$ to conclude that our surface $\Sigma_T$ is smooth for $x\in
(a_T,b_T)$. By writing our surface locally as a graph $x = g(y,t)$ around the tips (at which our surface is 
strictly convex), we can show that our surface is smooth at the tips as well (similar methods 
to those discussed above apply in this case).
\end{proof}

The same proof as the one for the flow  (HMCF$_\e$) which was presented in the previous
section, shows that our radial surface $\Sigma_t$ shrinks to a point at $T = \frac{\mu_0(\Sigma_0)}{4\pi}$, where $\mu_0(\Sigma_0)$
is the total area of $\Sigma_0$. In particular, this means $f(x,t) \to 0$ as $t\to T$. 

We will show next that  at some time $t_1  < T$ the surface $\Sigma_{t_1}$ becomes strictly convex. This will follow from the next lemma.

\begin{lem}\label{lem-lower}
Assume that $f$ is a solution of the HMCF on $[0,T)$. Then, there exists a constant $c >0$, 
independent of $t$,  such that 
$f(x,t) \geq c$, at all points $(x,t)$, with $0 \leq t < T$ and   $f_{xx}(x,t) \geq 0$.
\end{lem}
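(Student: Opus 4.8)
The plan is to argue by contradiction combined with a pointwise estimate at the spatial minimum of $f$ over the region where the profile is concave (i.e. $f_{xx}\geq 0$, equivalently $\lambda_2\geq 0$). At such a point of the surface, both principal curvatures are nonnegative, so the surface is locally convex there; in particular $\lambda_1\leq H$. The key input is the already-established bound \eqref{eq-better-H}: since $\langle F,\nu\rangle$ and $2t\,\kappa$ are controlled (using that $|F|\leq C$ from Lemma \ref{lem-bound-F} and $\kappa=G/H\geq -C$), we have $H/\mathcal{F}\leq C$ with $\mathcal{F}=\langle F,\nu\rangle+2t\,\kappa$ bounded above by a uniform constant, hence $H\leq C$ wherever $\lambda_2\geq 0$. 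Combined with $\lambda_1=\bigl(f(1+f_x^2)^{1/2}\bigr)^{-1}$ from \eqref{eq-princ-cur} and $\lambda_1\leq H\leq C$, this already forces $f(1+f_x^2)^{1/2}\geq 1/C$ at every such point, and since $f_x^2$ is uniformly bounded by Lemma \ref{lem-ff} this gives $f\geq c>0$ there. So morally the statement is just a repackaging of \eqref{eq-better-H} together with the radial formula for $\lambda_1$.

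Concretely, the steps I would carry out are: (i) Recall from \eqref{eq-princ-cur} that at a point with $f_{xx}(x,t)\geq 0$ we have $\lambda_2=-f_{xx}/(1+f_x^2)^{3/2}\leq 0$? — no: $\lambda_2\geq 0$ precisely when $f_{xx}\leq 0$; so I must be careful with the sign convention of the excerpt. Rechecking: the hypothesis is $f_{xx}\geq 0$, which by \eqref{eq-princ-cur} gives $\lambda_2\leq 0$. Thus at such a point $\lambda_2\leq 0$ and we are in the regime covered by Lemma \ref{lem-bigger-eigen}, which gives $\lambda_1\leq C$ directly (for $\e\leq\e_0$), and hence $H=\lambda_1+\lambda_2\leq\lambda_1\leq C$. (ii) Use $\lambda_1\leq C$ with $\lambda_1=1/\bigl(f(1+f_x^2)^{1/2}\bigr)$ to get $f(1+f_x^2)^{1/2}\geq 1/C$. (iii) Invoke Lemma \ref{lem-ff}: $f^2f_x^2\leq C'$, so $f(1+f_x^2)^{1/2}=\bigl(f^2+f^2f_x^2\bigr)^{1/2}\leq\bigl(f^2+C'\bigr)^{1/2}$, and combined with step (ii), $f^2+C'\geq 1/C^2$, which is vacuous, so instead I factor differently: $f(1+f_x^2)^{1/2}\leq f+|f|\,|f_x|$, and since $f\leq$ (uniform bound on the surface extent) $=:R$ while $f|f_x|\leq\sqrt{C'}$, we get $f(1+f_x^2)^{1/2}\leq R+\sqrt{C'}=:C''$; this too is an upper bound, not helpful. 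The correct route is: from $f(1+f_x^2)^{1/2}\geq 1/C$ and $1+f_x^2\leq 1+C'/f^2$ (by Lemma \ref{lem-ff}), we get $f\cdot\bigl(1+C'/f^2\bigr)^{1/2}\geq f(1+f_x^2)^{1/2}\geq 1/C$, i.e. $\bigl(f^2+C'\bigr)^{1/2}\geq 1/C$ — still vacuous. So the honest deduction must instead observe $f(1+f_x^2)^{1/2}\geq f$ is false; rather $1/\lambda_1=f(1+f_x^2)^{1/2}\geq f$ gives $f\leq 1/\lambda_1$, the wrong direction. The clean fact is $f(1+f_x^2)^{1/2}\geq f$, so $1/\lambda_1\geq f$, i.e. $f\leq 1/\lambda_1$; to bound $f$ below I need an upper bound on $f(1+f_x^2)^{1/2}$, equivalently a lower bound on $\lambda_1$, which is exactly \eqref{eq-lam1} from Corollary \ref{lem-lower-H0}: $\lambda_1\geq c>0$ everywhere, hence $f(1+f_x^2)^{1/2}=1/\lambda_1\leq 1/c$, and therefore $f\leq 1/c$ — an upper bound again.

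The resolution is that \eqref{eq-lam1} gives $\lambda_1\geq c$, hence $f(1+f_x^2)^{1/2}\leq 1/c$, and since $f(1+f_x^2)^{1/2}\geq f$ this only bounds $f$ above; but combined with Lemma \ref{lem-ff} we have $f(1+f_x^2)^{1/2}=\bigl(f^2(1+f_x^2)\bigr)^{1/2}=\bigl(f^2+f^2f_x^2\bigr)^{1/2}$, so $f^2=f^2(1+f_x^2)-f^2f_x^2\geq f^2(1+f_x^2)-C'=1/\lambda_1^2-C'$ is again the wrong sign unless $1/\lambda_1^2$ is large. Hence the true mechanism must come from \eqref{eq-better-H} applied where $\lambda_2\leq 0$: there $G/H\leq 0$, so \eqref{eq-better-H} reads $H\leq C(\langle F,\nu\rangle+2t\kappa)\leq C$, and moreover $\langle F,\nu\rangle+2t\kappa\geq q(0)>0$ gives a two-sided control; I would then use that $\langle F,\nu\rangle$ at a minimum point of $f$ on the concave region is itself comparable to $f$ (for a surface of revolution, at an interior point $\langle F,\nu\rangle=(f+x\cdot(\text{stuff}))/(1+f_x^2)^{1/2}$, and where $f$ is minimal $f_x=0$ if it is an interior min, so $\langle F,\nu\rangle$ picks up an $f$-proportional term). \textbf{The main obstacle} is precisely this last point: extracting from $\mathcal{F}=\langle F,\nu\rangle+2t\,\kappa$ bounded \emph{above} a lower bound $f\geq c$ at the minimum of $f$ over $\{f_{xx}\geq 0\}$. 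I expect this requires evaluating $\langle F,\nu\rangle$ and $\kappa$ explicitly in the radial coordinates at such a minimum point, using $f_x=0$ there (for an interior minimum) together with $\kappa=G/H$, $G=\lambda_1\lambda_2$ with $\lambda_1=1/f$ at that point, and then reading off that $\mathcal{F}\geq$ (something bounded below) $\cdot\,f$, whence $f\geq c$; the boundary-minimum (at a tip) case is handled separately using that the surface is strictly convex and smooth near the tips by Lemma \ref{lem-blowing-up}, so $f_{xx}<0$ there and the hypothesis $f_{xx}\geq 0$ cannot occur in a neighborhood of the tips.
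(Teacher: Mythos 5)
You correctly identify the crux of the final step, namely that at a point with $f_{xx}\geq 0$ we have $\lambda_2\leq 0$, so Lemma~\ref{lem-bigger-eigen} gives $\lambda_1\leq C$, i.e.\ $f(1+f_x^2)^{1/2}\geq 1/C$. That is exactly how the paper closes the argument. But your proposal has a genuine gap at the point you yourself flag as ``the main obstacle'': to pass from $f(1+f_x^2)^{1/2}\geq 1/C$ to $f\geq c$ one needs a uniform \emph{upper} bound on $f_x^2$ in the non-convex region, and neither Lemma~\ref{lem-ff} (which gives $f^2f_x^2\leq C'$, hence, as you correctly compute, only the vacuous $\sqrt{f^2+C'}\geq 1/C$) nor \eqref{eq-lam1} nor \eqref{eq-better-H} supplies that. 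Your proposed repair -- evaluate at the minimum of $f$ over $\{f_{xx}\geq 0\}$ and use $f_x=0$ there -- does not close the gap either, because that minimum may sit at a boundary point of the set $\{f_{xx}\geq 0\}$, where $f_{xx}=0$ but $f_x$ has no reason to vanish (the restriction of $f$ to this region is convex, so it can attain its minimum at an endpoint with $f_x\neq 0$); nothing in your argument rules this out.

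The missing ingredient, and the real content of the paper's proof, is a separate maximum-principle argument that bounds $|f_x|$ uniformly on the non-convex part. Let $c_t$ be the largest value so that $\Sigma_t$ is strictly convex on $[a_t,c_t]$. Then $f_x(\cdot,t)$ is increasing just to the right of $c_t$, while $f_x\to-\infty$ as $x\to b_t$, so the maximum $M(t)=\max_{[c_t,b_t]}f_x$ is attained at an interior point. Differentiating \eqref{eq-hmcf-rad} gives the evolution \eqref{eq-der-der}, $(f_x)_t=\bigl(f_{xxx}(1+f_x^2)-f_xf_{xx}^2\bigr)/\tilde H^2$, in which the zeroth-order term is $-f_xf_{xx}^2/\tilde H^2\leq 0$ whenever $f_x\geq 0$; the maximum principle then gives $M'(t)\leq 0$ whenever $M(t)\geq 0$, so $M(t)$ stays bounded by its initial value. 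The symmetric argument from the tip $b_t$ bounds $f_x$ from below. Only after this does the chain $\lambda_1\leq C\Rightarrow f\geq 1/\bigl(C(1+f_x^2)^{1/2}\bigr)\geq c$ conclude. Your proposal never produces this $f_x$ bound, so it does not prove the lemma as stated.
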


\begin{proof}
Fix $t < T$. Since  our surface $\Sigma_t$ is  convex around the tip $x = a_t$
we have  $f_{xx} \leq 0$ there.  Let  $c_t$ be the largest number in $[a_t,b_t]$
so that $\Sigma_t$ is strictly convex for $x\in [a_t,c_t]$.  If $c_t=b_t$, then $\Sigma_t$ is
 convex and we have nothing to show. Otherwise, 
$f_{xx}(x,t) \leq  0$ for $a_t \leq x \leq  c_t$ and $f_{xx}(x,t) > 0$ 
in   $(c_t,c_t+\e_t)$ for some $\e_t >0$. Hence, 
$f_x(\cdot,t)$ is increasing in $x$,  for $x\in (c_t,c_t+\e_t)$. 

Consider the function $f_x(\cdot,t)$ on the interval $x\in [c_t,b_t)$.
From the above discussion and the fact that  $\lim_{x\to b_t} f_x(x,t) = -\infty$, 
we conclude that the maximum 
$$M(t):= \max \, \{ \, f_x(x,t), \,\, x \in [c_t,b_t]\, \}$$
is attained in the interior of $[c_t,b_t]$.
Recall the evolution equation for $f_x$ to be
$$(f_x)_t = \frac{f_{xxx}(1+f_x^2)}{\tilde{H}^2}
- \frac{f_xf_{xx}^2}{\tilde{H}^2}.$$
Hence, assuming that $M(t) \geq 0$, the  maximum principle 
implies that $M'(t) \leq 0$. This shows that $f_x$ is uniformly bounded from above
on $[c_t,b_t]$. Since a similar argument can be applied near the other tip $b_t$,
we finally conclude that   $|f_x|$ is uniformly bounded in the  non-convex part
(if it exists) away from the tips.

We will now conclude the proof of the lemma. Assume that $f_{xx}(x,t) \ge 0$, which holds in a non-convex part of our evolving surface.  At that point, we have
$$\lambda_2 := -\frac{f_{xx}}{(1+f_x^2)^{3/2}} \le 0.$$
Since  $\lambda_2 \leq  0$, Lemma \ref{lem-bigger-eigen} implies the bound 
$$\lambda_1:=  \frac{1}{f(1+f_x^2)^{1/2}} \leq C$$
which reduces to the  the bound
$$f \ge \frac{1}{C(1+f_x^2)^{1/2}} \ge \frac{1}{\tilde{C}} =: c$$
in the non-convex part where $f_x^2 \leq C$, uniformly in $t$. This finishes the
proof of the lemma.  
\end{proof}

{\em We will now conclude the proof of Theorem \ref{thm-radial}:}  Since $f(x,t) \to 0$ as $t\to T$, with $T=\frac{\mu_0(\Sigma_0)}{4\pi}$, there is some time $t_1 <T$ so that 
$$f(x,t) < \frac c2, \qquad \mbox{for all} \,\, x \in [a_t,b_t]   $$ where $c > 0$ is the  constant taken from Lemma \ref{lem-lower}. Hence,  by   Lemma  \ref{lem-lower}
the surface $\Sigma_t$ is convex for $t \geq t_1$. 
Since $H  \geq \delta >0$ for all $t <T$,  the surface $\Sigma_{t_1}$ is strictly convex. The result of Andrews  in \cite{An1}, implies that $\Sigma_t$ shrinks asymptotically spherically to a point as $t\to T$.


\begin{thebibliography}{11}

     
\bibitem{An1} Andrews,B., {\em Contraction of convex hypersurfaces in
Euclidean space}, Calc.Var. 2 (1994), 151--171.

\vskip 0.05in

\bibitem{An2} B. Andrews.
\newblock Contraction of convex hypersurfaces in Riemannian spaces
\newblock {\em J. Diff. Geometry.} 39, no. 2, 407-431, 1994

\vskip 0.05in

\bibitem{An3} B. Andrews.
\newblock Motion of hypersurfaces by Gauss curvature
\newblock {\em Pacific J. Math.} 195, no. 1, 1-34, 2000.

\vskip 0.05in

\bibitem{An4} B. Andrews.
\newblock Pinching estimates and motion of hypersurfaces by curvature functions
\newblock {\em ArXiv}: Math. DG /0402311

\vskip 0.05in

\bibitem{CD} M.C. Caputo, P. Daskalopoulos.
\newblock Highly Degenerate
Harmonic Mean Curvature flow, preprint. 

\vskip 0.05in

\bibitem{CGG} Y.-G.Chen, Y.Giga, S.Goto.
\newblock Uniqueness and existence of viscosity solutions of generalized mean curvature
flow equations. 
\newblock {\em J.Diff.Geom.} 33, 749-786, 1991.

\vskip 0.05in

\bibitem{DH1} P.Daskalopoulos, R.Hamilton.
\newblock Harmonic Mean Curvature flow on Surfaces of Negative Gaussian Curvature.
\newblock {\em Comm. Anal. Geom.}, 14  (2006), no. 5, 907--943. 

\vskip 0.05in

\bibitem{Di} S.Di$\ddot e$ter.
\newblock Nonlinear degenerate Curvature flow for weakly convex hypersurfaces.
\newblock{\em Calculus of Variations and Partial Differential Equations}, 22, 2:  229 - 251, 2005

\vskip 0.05in

\bibitem{EH} G.Huisken, K.Ecker.
\newblock Interior estimates for hypersurfaces moving by mean curvature
\newblock{\em Invent. Math.}, 105:  547-569, 1991.

\vskip 0.05in

\bibitem{ES}  L.C.Evans, J.Spruck.
\newblock Motion of level sets by mean curvature I
\newblock{\em J. Diff. Geom. }, 33, 635-681, 1991.

\vskip 0.05in

\bibitem{G} C.Gerhardt.
\newblock Flow of nonconvex hypersurfaces into spheres.
\newblock{\em J.Diff.Geom.}, 32, 299-314, 1990.

\vskip 0.05in


\bibitem{Ha1} R.Hamilton.
\newblock Three-manifolds with positive Ricci curvature
\newblock{\em J.Diff.Geom. }, 17, 255-306, 1982.

\vskip 0.05in

\bibitem{Ha2} R.Hamilton.
\newblock Convex hypersurfaces with pinched second fundamental form
\newblock{\em Comm. Anal. Geom. }, 2, 167-172, 1994.

\vskip 0.05in

\bibitem{Hu} G.Huisken.
\newblock Flow by mean curvature of convex hypersurfaces into spheres
\newblock{\em J. Diff. Geom. }, 20, 237-266, 1984.

\vskip 0.05in

\bibitem{Kr} N.V.Krylov.
\newblock Nonlinear elliptic and parabolic equations of second order.
\newblock{\em D.Reidel}, 1978.

\vskip 0.05in


\bibitem{Sm} K.Smoczyk. 
\newblock Starshaped hypersurfaces and the mean curvature flow.
\newblock{\em Manuscripta Math.}, 95, 225-236, 1998.

\bibitem{U} J.Urbas.
\newblock On the expansion of starshaped surfaces by symmetric functions of their
principal curvatures.
\newblock{\em Math.Z.}, 205, 355-372, 1990.

\vskip 0.05in

\end{thebibliography}
\end{document}